\newtheorem*{rep@theorem}{\rep@title}
\newcommand{\newreptheorem}[2]{%
\newenvironment{rep#1}[1]{%
 \def\rep@title{#2 \ref{##1}}%
 \begin{rep@theorem}}%
 {\end{rep@theorem}}}
\newenvironment{rezabib}
  {\bibdiv\biblist\setupbib}
  {\endbiblist\endbibdiv}
\def\setupbib{\catcode`@=\active}
\def\gatherkey#1#2{\gatherkeyaux{#1}#2\gatherkeyaux}
\def\gatherkeyaux#1#2,#3\gatherkeyaux{\bib{#2}{#1}{#3}}
\newtheorem{thm}{Theorem}[section]
\newtheorem{thrm}[thm]{Theorem}
\newtheorem{prop}[thm]{Proposition}
\newtheorem{remarks}[thm]{Remarks}
\newtheorem{lem}[thm]{Lemma}
\newtheorem{cor}[thm]{Corollary}
\newcommand{\QQ}{\mathbb{Q}}
\newcommand{\ZZ}{\mathbb{Z}}
\newcommand{\norm}{{\mathrm{N}}}
\newcommand{\ringO}{{\mathfrak{O}}}
\newcommand{\fa}{{\mathfrak{a}}}
\newcommand{\fb}{{\mathfrak{b}}}
\newcommand{\res}{{\mathrm{res}}}
\newcommand{\fp}{{\mathfrak{p}}}
\newcommand{\fm}{{\mathfrak{m}}}
\def\imod#1{\allowbreak\mkern7mu({\operator@font mod}\,\,#1)}
\begin{document}

\title{Value-distribution of cubic Hecke $L$-functions}

\thanks{Research of the first author is partially supported by NSERC. Research of the second author was partially supported by a PIMS postdoctoral fellowship.}

\date{\today}

\keywords{\noindent value-distribution, logarithm of $L$-functions, logarithmic derivative of $L$-functions, cubic characters}

\subjclass[2010]{11R42,  11M41.}

\author{Amir Akbary}
\author{Alia Hamieh}

\address{Department of Mathematics and Computer Science \\
        University of Lethbridge \\
        Lethbridge, AB T1K 3M4 \\
        Canada}
        \email{amir.akbary@uleth.ca}
 \address{Department of Mathematics and Statistics \\
        University of Northern British Columbia \\
        Prince George, BC V2N4Z9 \\
        Canada}
\email{alia.hamieh@unbc.ca}

\begin{abstract}
Let $k=\QQ(\sqrt{-3})$, and let $c\in \mathfrak{O}_k$ be a square free algebraic integer such that  $c\equiv 1 \imod{\langle9\rangle}$. Let $\zeta_{k(c^{1/3})}(s)$ be the Dedekind zeta function of the cubic field $k(c^{1/3})$ and $\zeta_k(s)$ be the Dedekind zeta function of $k$. For fixed real $\sigma>1/2$, we obtain asymptotic distribution functions $F_{\sigma}$ for the values of the logarithm and the logarithmic derivative of the Artin $L$-functions
\begin{equation*}
L_c(\sigma)= \frac{\zeta_{k(c^{1/3})}(\sigma)}{\zeta_k(\sigma)},
\end{equation*}
as $c$ varies. Moreover, we express the characteristic function of $F_{\sigma}$ explicitly as a product indexed by the prime ideals of $\mathfrak{O}_k$. As a corollary of our results, we establish the existence of an asymptotic distribution function for the error term of the Brauer-Siegel asymptotic formula for the family of number fields $\{k(c^{1/3})\}_{c}$. We also deduce a similar result for the Euler-Kronecker constants of this family. 
\end{abstract}

\maketitle
\section{INTRODUCTION}

Several mathematicians have studied the distribution of values of $L$-functions associated with quadratic Dirichlet characters in the half plane $\Re(s)>\frac12$. One of the earliest works on this topic was done by Chowla and Erd\H{o}s in 1953.
Let $d$ be an integer such that $d$ is not a perfect square and $d\equiv 0,1\imod 4$. Consider, for $\Re(s)>0$, the infinite sum \[L_{d}(s)=\sum_{n=1}^{\infty}\frac{\left(\frac{d}{n}\right)}{n^{s}}.\] The function $L_{d}(s)$ is the $L$-series  attached to the quadratic Dirichlet character determined by the Kronecker symbol $\left(\frac{d}{.}\right)$. 
In \cite{chowla-erdos} the authors studied the distribution of values of $L_{d}(s)$ in the half-line $\sigma>\frac34$ for varying $d$ proving the following theorem.
\begin{thrm}[\bf Chowla-Erd\H{o}s]
If $\sigma>3/4$, we have
$$\lim_{x\rightarrow \infty} \frac{\#\{0<d\leq x;~d\equiv 0, 1\imod{4}~{\rm and}~ L_d(\sigma)\leq z\}}{x/2}=G(z)$$
exists. Furthermore $G(0)=0$, $G(\infty)=1$, and $G(z)$, the distribution function, is a continuous and strictly increasing function of $z$.
\end{thrm}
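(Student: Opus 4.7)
The plan is a three-step approach: approximate $L_d(\sigma)$ by a short Euler product depending only on finitely many Kronecker symbols $(d/p)$; use equidistribution of these symbols as $d$ varies to identify a limiting random variable; and verify regularity of the limit law.

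\textbf{Step 1 (Truncation).} For a slowly growing parameter $N=N(x)$, I would write
\[
L_d(\sigma)=\prod_{p\le N}\Bigl(1-\frac{(d/p)}{p^{\sigma}}\Bigr)^{-1}\cdot E_d(\sigma,N),
\]
and prove that $E_d(\sigma,N)\to 1$ for all but $o(x)$ of the admissible $d\le x$. This is obtained by a second-moment estimate: expanding $|E_d(\sigma,N)-1|^2$, summing over $d\equiv 0,1\imod 4$, and invoking mean-value bounds for quadratic character sums (of Polya--Vinogradov type). The off-diagonal contributions are negligible precisely when $\sigma>3/4$, which is where this hypothesis enters; a standard Chebyshev argument then eliminates the exceptional set.

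\textbf{Step 2 (Limiting law and regularity).} Quadratic reciprocity implies that, as $d\equiv 0,1\imod 4$ varies in $[1,x]$, the finite tuple $((d/p))_{p\le N}$ converges jointly in distribution to an independent family of symmetric $\pm 1$-valued random variables $X_p$ (the contribution of $p\mid d$ being negligible). Passing to the limit first in $x$, then in $N$, produces the random variable
\[
L(\sigma)=\prod_p\Bigl(1-\frac{X_p}{p^{\sigma}}\Bigr)^{-1},
\]
convergent almost surely for $\sigma>1/2$, and one defines $G(z)=\mathbb{P}(L(\sigma)\le z)$. The method of moments, matching $\frac{2}{x}\sum_{d\le x}L_d(\sigma)^k$ with $\mathbb{E}L(\sigma)^k$ for every integer $k\ge 1$ and applying Carleman's condition, justifies the convergence. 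Positivity of each Euler factor yields $G(0)=0$, while $G(\infty)=1$ is immediate. Continuity of $G$ is obtained by writing $\log L(\sigma)=\sum_p\log(1-X_p/p^{\sigma})^{-1}$ as a sum of independent, bounded, non-degenerate variables and invoking a L\'evy-type anti-concentration bound, which rules out atoms. Strict monotonicity on $(0,\infty)$ follows because perturbing finitely many $X_p$ moves $L(\sigma)$ continuously through any target positive value, so every open subinterval of the support receives positive mass.

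\textbf{Main obstacle.} The hardest step is the tail estimate in Step 1: unconditional mean-value results for truncated quadratic $L$-functions are exactly what determines the $3/4$ threshold, and any weakening of this exponent would require either GRH or deeper second-moment technology of character sum type. A secondary subtlety is the verification of Carleman's condition for the random Euler product, which demands uniform bounds on all integer moments $\mathbb{E}L(\sigma)^k$; these are controlled by expanding the product and using independence of the $X_p$, but care is needed to justify interchange of expectation and the infinite product.
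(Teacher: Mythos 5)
The paper does not prove Chowla--Erd\H{o}s; it merely cites \cite{chowla-erdos}, so there is no ``paper's proof'' to compare against. Judged on its own merits, your outline points in a sensible direction (truncation plus equidistribution of Kronecker symbols, which is indeed the essential skeleton of the classical argument, and also of Elliott's later refinement), but two of its components have genuine problems.

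First, your Step~2 is internally inconsistent, and the method-of-moments justification as stated does not go through. After the equidistribution argument you already have convergence in distribution of the truncated objects, and letting $N\to\infty$ produces the limit law directly; there is nothing left for a moment argument to prove. But if one does insist on matching $\frac{2}{x}\sum_{d\le x}L_d(\sigma)^k$ with $\mathbb{E}L(\sigma)^k$, Carleman's condition actually \emph{fails} in the range $3/4<\sigma<1$: the random Euler product $L(\sigma)=\prod_p(1-X_p/p^{\sigma})^{-1}$ has $\log\mathbb{E}[L(\sigma)^k]\asymp k^{1/\sigma}/\log k$ (dominated by primes $p\lesssim k^{1/\sigma}$ with $X_p=+1$), so $m_k^{-1/(2k)}\approx\exp(-ck^{1/\sigma-1}/\log k)$ with $1/\sigma-1>0$, and $\sum_k m_k^{-1/(2k)}$ converges. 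So the uniqueness half of the moment method is unavailable exactly in the interesting range. Your ``main obstacle'' paragraph worries about verifying Carleman; the real issue is that Carleman is false here, so the moment argument must be dropped entirely in favour of the direct distributional limit you already set up.

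Second, the limiting random variables are described incorrectly: $(d/p)$ takes the value $0$ when $p\mid d$, with proportion tending to roughly $1/p$ as $d$ ranges over $d\equiv 0,1\pmod 4$, and this is not negligible --- it enters the exact form of the limit law (compare the factor $1/(\norm(\fp)+1)$ in Theorems~\ref{MM-theorem} and \ref{mainthrm}). The $X_p$ must be $\{0,\pm 1\}$-valued; replacing them by symmetric $\pm1$ Rademacher variables produces a different, wrong $G$. A smaller but real awkwardness is your choice to truncate the Euler product rather than the Dirichlet series: this forces you to estimate $\sum_d |E_d(\sigma,N)-1|^2$ where $E_d=L_d(\sigma)\prod_{p\le N}(1-(d/p)p^{-\sigma})$, i.e.\ a weighted second moment of $L_d(\sigma)$ itself, which is circular without further input. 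Chowla--Erd\H{o}s truncate the Dirichlet series $\sum_{n\le N}(d/n)n^{-\sigma}$, whose second moment over $d$ is a clean double character sum controlled by P\'olya--Vinogradov; this is where the exponent $3/4$ actually comes from, and it sidesteps the sign issue (one never needs to know a priori that $L_d(\sigma)>0$). Your continuity and strict-monotonicity arguments in Step~2 are fine in spirit, though strict monotonicity on all of $(0,\infty)$ only holds for $\sigma\le 1$ (for $\sigma>1$ the support is a compact interval).
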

An analogous result also holds for negative $d$ with $0<-d\leq x$.
From Dirichlet's class number formula we know that $L_d(1)>0$, so under the Generalized Riemann Hypothesis (GRH), for $\sigma>1/2$, we have that  $L_d(\sigma)>0$ for all $d$. Note that the above result, for $\sigma>3/4$, unconditionally establishes that $L_d(\sigma)>0$ for almost all $d\leq x$ (i.e. except a set of size $o(x)$). 

In 1970, Elliott revisited this problem (for $\sigma=1$) and strengthened Chowla-Erd\H{o}s' theorem. The following is proved in \cite{elliott-0} (see also \cite[Chapter 20]{elliott-book} for a detailed exposition of this theorem).

 \begin{thrm}[\bf Elliott]
{There is a distribution function $F(z)$ such that}
{ $$\frac{\#\{0<-d\leq x;~d\equiv 0, 1\imod{4}~{\rm and}~ \log{L_d(1)}\leq z\}}{x/2}=F(z)+O\left( \sqrt{\frac{\log\log{x}}{\log{x}}}  \right)$$}{holds uniformly for all real $z$, and real $x\geq 9$. $F(z)$ has a probability density, may be differentiated any number of times, and has the characteristic function} 
{ $$\varphi_F(y)=\prod_{p} \left( \frac{1}{p}+\frac{1}{2}\left(1-\frac{1}{p}\right) \left(1-\frac{1}{p}\right)^{-iy}+ \frac{1}{2} \left(1-\frac{1}{p} \right) \left(1+\frac{1}{p}\right)^{-iy} \right)$$}
{\small which belongs to the Lebesgue class $L^1(-\infty, \infty)$.}
\end{thrm}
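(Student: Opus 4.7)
The plan is to exploit the heuristic that, as $d$ varies, the Kronecker symbols $\chi_d(p)=\left(\frac{d}{p}\right)$ at distinct primes $p$ behave like independent random variables, with $\chi_d(p)$ equal to $0$, $+1$, or $-1$ with respective densities $1/p$, $(1-1/p)/2$, $(1-1/p)/2$. Under this model, the Euler-product expansion $\log L_d(1) = -\sum_p \log(1-\chi_d(p)/p)$ should converge in distribution to $\sum_p X_p$, where the $X_p$ are independent with $X_p=0$ with probability $1/p$, $X_p = -\log(1-1/p)$ with probability $(1-1/p)/2$, and $X_p = -\log(1+1/p)$ with probability $(1-1/p)/2$. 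A direct calculation identifies $\mathbb{E}[e^{iy X_p}]$ with the $p$-factor of the claimed $\varphi_F(y)$, while the bound $\mathbb{E}[e^{iy X_p}] = 1 + O(y^2/p^2)$ (the first-order term vanishes by the $\pm 1$ symmetry) gives absolute convergence of the product.

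To make this rigorous I would proceed in three steps. First (truncation), approximate $\log L_d(1)$ by $S_d(Y) := \sum_{p\leq Y}\log(1-\chi_d(p)/p)^{-1}$ for a parameter $Y=Y(x)$, using an effective contour shift for the Dirichlet series $L_d(s)$ to bound the tail in mean square, together with a Littlewood--Tatuzawa-type lower bound on $L_d(1)$ that keeps $\log L_d(1)$ well defined for almost all $d$. Second (character-sum matching), compute the averaged characteristic function $\Phi_x(y) = (2/x) \sum_{0<-d\leq x} e^{iy S_d(Y)}$ by expanding the Dirichlet polynomial in $\chi_d$ and invoking P\'olya--Vinogradov-type estimates $\sum_{|d|\leq x}\chi_d(n) = x\cdot\mathbf{1}_{n=\square}\cdot \rho(n) + O(\sqrt{n}\log n)$; the square terms reconstruct the probabilistic main term $\prod_{p\leq Y} \mathbb{E}[e^{iy X_p}]$, and the non-square contribution is negligible provided $Y$ is not too large. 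Third (smoothing), convert convergence of $\Phi_x(y)$ to $\varphi_F(y)$ into a uniform bound on distribution functions via Esseen's inequality; this requires existence and smoothness of the density of $F$, which follows from rapid decay of $\varphi_F$ at infinity, itself a consequence of a Mertens-type lower bound for $\sum_p \Re(1 - \mathbb{E}[e^{iy X_p}])$.

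The central obstacle is the three-way balance among (i) the truncation error, (ii) the character-sum remainder, and (iii) the frequency cutoff in Esseen's inequality, each of which must agree at the target rate $\sqrt{\log\log x/\log x}$. Since the tail variance satisfies $\sum_{p>Y}\mathbb{E}[X_p^2] \asymp 1/\log Y$, a natural choice is $Y = (\log x)^c$ for some $c>0$, and the infinite differentiability of $F$---guaranteed by the rapid decay of $\varphi_F$---is precisely what allows the smoothing step to be sharpened to match the $(\log\log x/\log x)^{1/2}$ rate. Membership of $\varphi_F$ in $L^1(-\infty,\infty)$, hence the Fourier inversion formula for the density of $F$, follows from the same decay estimate, completing the analytic ingredients.
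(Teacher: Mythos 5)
The paper does not prove Elliott's theorem --- it is quoted as a cited background result (from \cite{elliott-0} and Chapter 20 of \cite{elliott-book}) --- so the relevant comparison is between your proposal and Elliott's original argument, and secondarily the paper's own machinery for the analogous cubic statement. Your outline is a faithful reconstruction of the architecture of Elliott's proof: truncate the Euler product to $p\le Y$, evaluate the averaged characteristic function by expanding the Dirichlet polynomial in the Kronecker symbols and applying quadratic reciprocity together with a P\'olya--Vinogradov-type estimate so that only the square terms survive, and convert pointwise convergence of characteristic functions to a uniform rate of convergence of distribution functions via an Esseen-type inequality, using the rapid decay of $\varphi_F$ to justify the $L^1$ and smoothness claims. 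This is a genuinely different route from what the paper does for its cubic analogue: there the authors replace sharp cutoffs by exponentially smoothed sums $\exp(-\norm(c)/Y)$, use the zero-density input of \cite{BGL} together with the cubic large sieve of \cite{HB} to prove pointwise convergence of the smoothed characteristic function, and then pass to a distribution function via L\'evy's continuity theorem followed by a Hardy--Littlewood Tauberian theorem. That route establishes existence of the limiting distribution but makes no attempt at an explicit rate; Elliott's route, and yours, yields the quantitative $O\bigl(\sqrt{\log\log x/\log x}\bigr)$ error, at the cost of making the truncation, character-sum, and Esseen-frequency errors effective and balancing them, which you correctly identify as the crux.

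The one genuine inaccuracy is in your truncation step. A Littlewood--Tatuzawa (Siegel-type) lower bound on $L_d(1)$ is the wrong tool: it is ineffective and would preclude the explicit rate, and in any case $L_d(1)>0$ is automatic from the class number formula, so $\log L_d(1)$ is always defined. What is actually needed --- and what Elliott uses --- is a quantitative statement that $\log L_d(1)-S_d(Y)$ is small for all but a negligible set of $d\le x$, obtained from a mean-square (large-sieve) bound on the tail of the Euler sum, or equivalently a zero-density argument pushing zeros of $L(s,\chi_d)$ away from $s=1$ for most $d$. With that substitution in step one, the remaining steps of your plan are correctly identified and in the right order.
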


The above theorem gives detailed information on the distribution function in Chowla-Erd\H{o}s' theorem for $\sigma=1$ (it is smooth, has a density function, and its characteristic function can be computed) and provides an explicit error term. Moreover, the proof relies on the novel idea of interpreting the problem as a probability problem on sums of independent random variables.  
This point of view was explored by Elliott in the 1970's for several functions with similar expressions (see \cite{elliott-01}, \cite{elliott-02}, and \cite{elliott} for arguments mod $2\pi$ and logarithm of absolute values of quadratic $L$-functions).

  In \cite{GS} Granville and Soundararajan initiated the study of the tail of the distribution of the values of $L_d(1)$ as $d$ varies over the fundamental discriminants. Prior to this work, Hattori and Matsumoto \cite{MR1670215} had studied the tail of the distribution of $\log \zeta(\sigma+it)$ for $\frac12<\sigma<1$. This line of research has been extensively explored by Lamzouri for various $L$-functions (see for example  \cite{lamzouri2}, \cite{lamzouri3}, and \cite{MR3378382}).

Recently Ihara and Matsumoto started a systematic study of the value-distribution of the logarithm and the logarithmic derivative of $L$-functions on the half-plane $\Re(s)>1/2$ (see for example \cite{I-M1} and \cite{I-M}). 
Their approach has its roots in classical results such as L\'{e}vy's continuity theorem and Jessen-Wintner theory of infinite convolutions of distribution functions.

Following the method devised in \cite{I-M}, Mourtada and Murty proved the following (see \cite[Theorem 2]{M-M}).
\begin{thrm}[{\bf Mourtada-Murty}]
\label{MM-theorem}
Let $\sigma>1/2$, and assume the GRH (the Generalized Riemann Hypothesis for $L_d(s)$). Let $\mathcal{F}(Y)$ denote the set of the fundamental discriminants in the interval $[-Y, Y]$ and let $N(Y)=\#\mathcal{F}(Y)$. Then, there exists a probability density function $M_\sigma$, such that 
\begin{equation*}
\lim _{Y\rightarrow\infty} \frac{1}{N(Y)} \#\{ d\in \mathcal{F}(Y); ~ \left({L_d^\prime}/{L_d}\right)(\sigma)\leq z\}= \int_{-\infty}^{z} M_\sigma(t) dt.
\end{equation*}
Moreover, the characteristic function $\varphi_{F_\sigma}(y)$ of the asymptotic distribution function $F_\sigma(z)=  \int_{-\infty}^{z} M_\sigma(t) dt$  is given by 
$$\varphi_{F_\sigma}(y)= \prod_{p}  \left(\frac{1}{p+1}+ \frac{p}{2(p+1)}\exp{\left (  -\frac{iy\log{p}}{p^\sigma-1} \right)}+ \frac{p}{2(p+1)} \exp{\left ( \frac{iy\log{p}}{p^\sigma+1}  \right)} \right).$$

\end{thrm}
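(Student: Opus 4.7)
The plan is to realize $(L'_d/L_d)(\sigma)$ as an almost-sure sum of nearly independent prime contributions and identify the limit via L\'evy's continuity theorem, adapting Elliott's strategy for $\log L_d(1)$ to the logarithmic derivative.

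First I would use GRH to approximate the logarithmic derivative by a short Dirichlet polynomial. A truncated explicit-formula argument gives, for $2\leq x \leq Y^{o(1)}$,
$$\frac{L_d'}{L_d}(\sigma)=-\sum_{n\leq x}\frac{\Lambda(n)\chi_d(n)}{n^\sigma}+E_\sigma(x,d),$$
with a mean-square error $\frac{1}{N(Y)}\sum_{d\in\mathcal{F}(Y)}|E_\sigma(x,d)|^2\to 0$ as $Y\to\infty$, provided $x$ grows slowly enough. Prime-power terms with $k\geq 2$ contribute $O(1)$ uniformly, so the leading part matches the formal expansion $-\sum_{p\leq x}\sum_{k\geq 1}\chi_d(p)^k\log p/p^{k\sigma}$; for each $p$ this geometric series equals $-\log p/(p^\sigma-\chi_d(p))$, i.e. $-\log p/(p^\sigma-1)$, $0$, or $\log p/(p^\sigma+1)$ according as $\chi_d(p)=1,0,-1$.

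Next I would compute the average of the characteristic function $\exp(iy(L'_d/L_d)(\sigma))$ over $\mathcal{F}(Y)$. The key input is the quantitative equidistribution of the joint values $(\chi_d(p_1),\dots,\chi_d(p_r))$: by orthogonality of Kronecker symbols summed over fundamental discriminants, together with the density of squarefree integers, the random vector $(\chi_d(p))_{p\leq x}$ converges (as $Y\to\infty$) in distribution to a family of independent variables $X_p$ with $\Pr(X_p=\pm 1)=p/(2(p+1))$ and $\Pr(X_p=0)=1/(p+1)$, matching the densities with which $p$ splits, is inert, or ramifies in $\QQ(\sqrt{d})$. Consequently the averaged characteristic function of the truncated sum converges to the finite product
$$\prod_{p\leq x}\left(\frac{1}{p+1}+\frac{p}{2(p+1)}\exp\!\left(-\frac{iy\log p}{p^\sigma-1}\right)+\frac{p}{2(p+1)}\exp\!\left(\frac{iy\log p}{p^\sigma+1}\right)\right).$$

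Taking $x\to\infty$, this infinite product converges absolutely and uniformly on compact subsets of $y$ for $\sigma>1/2$; its local factor at $p$ differs from $1$ by $O_y(\log p/p^{2\sigma})$, which is summable in this range, and equivalently the underlying Jessen--Wintner random series converges in $L^2$. Combining pointwise convergence of characteristic functions with the mean-square control on $E_\sigma(x,d)$, L\'evy's continuity theorem yields weak convergence to the distribution with characteristic function $\varphi_{F_\sigma}$, and the Jessen--Wintner criterion provides the density $M_\sigma$. The main obstacle is the mean-square bound on $E_\sigma(x,d)$ uniform in $\sigma>1/2$: the pointwise GRH estimate carries factors of $\log|d|$ too large to absorb into $N(Y)$, so one must pair the explicit formula with a smoothed large-sieve inequality for Kronecker symbols (of Heath--Brown or Jutila type) and average over $d$ before estimating the contribution of zeros. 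This forces a delicate balance where $x=x(Y)$ must grow fast enough to capture the limiting characteristic function yet slowly enough to keep the zero-sum remainder $L^2$-negligible.
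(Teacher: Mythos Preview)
This theorem is not proved in the present paper: it is quoted from Mourtada--Murty \cite{M-M} as background, and the paper's own contribution is the cubic analogue (Theorem~\ref{mainthrm}). That said, both \cite{M-M} and this paper follow the Ihara--Matsumoto scheme of \cite{I-M}, which is organized quite differently from your proposal.

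In the Ihara--Matsumoto approach one does \emph{not} truncate $L_d'/L_d(\sigma)$ and then control a remainder. Instead one expands $\exp\big(iy\,L_d'/L_d(\sigma)\big)$ itself as a Dirichlet series $\sum_n \lambda_y(n)\chi_d(n)n^{-\sigma}$ via the generating identity $\exp(ut/(1-t))=\sum_{r\ge 0}G_r(u)t^r$ (so the coefficients $\lambda_y$ are multiplicative and satisfy $\lambda_y(n)\ll_\epsilon n^\epsilon$). One then smooths with a factor $e^{-n/X}$, averages over $d$, and separates the contribution of perfect squares $n$ (which produces the main term and the Euler product directly) from non-squares (handled by P\'olya--Vinogradov and large sieve). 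GRH enters only to move the contour that removes the smoothing; one never needs an $L^2$ bound for an explicit-formula remainder. The Euler product for $\varphi_{F_\sigma}$ falls out of the square-part computation, and a separate decay estimate $|\varphi_{F_\sigma}(y)|\le \exp(-c|y|^{1/\sigma-\delta})$ upgrades weak convergence to the existence of a smooth density.

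Your route---truncate $L_d'/L_d$ by the explicit formula, compute the characteristic function of the truncated sum via asymptotic independence of $(\chi_d(p))_{p\le x}$, then let $x\to\infty$---is in the spirit of Elliott and is perfectly viable under GRH. In fact the obstacle you flag is milder than you suggest: under GRH the zero-sum remainder in the explicit formula is $O\big(x^{1/2-\sigma}\log|d|\big)$ pointwise, so for $x=Y^\eta$ with small $\eta>0$ it is $o(1)$ uniformly over $|d|\le Y$, and no averaging is needed to kill it. The genuine technical cost of your approach lies elsewhere: to pass from the average of $\prod_{p\le x}\exp(iy f_p(\chi_d(p)))$ to $\prod_{p\le x}\mathbb{E}\exp(iy f_p(X_p))$ you must expand and control cross-terms indexed by squarefree moduli up to $\prod_{p\le x}p$, which forces $x$ to grow very slowly with $Y$. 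The Ihara--Matsumoto packaging avoids this bookkeeping because the multiplicativity of $\lambda_y$ builds the product structure in from the start.
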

Notice that if $d$ is a fundamental discriminant then 
\begin{equation}
\label{quadratic}
L_d(s)=\frac{\zeta_{\QQ(\sqrt{d})}(s)}{\zeta(s)}, 
\end{equation}
where $\zeta_{\QQ(\sqrt{d})}(s)$ is the Dedekind zeta function of $\QQ(\sqrt{d})$ and $\zeta(s)$ is the Riemann zeta function.

In this paper, we prove an analogue of the above theorem for the cubic characters in the number field setting without assuming the GRH. In fact, we believe that one can remove the GRH assumption in  Theorem \ref{MM-theorem} by applying an appropriate zero density theorem for $L$-functions of quadratic Dirichlet characters and following a line of argument similar to the one we describe in Section \ref{sec:zero-density}.

Let $k=\QQ(\sqrt{-3})$ and $\zeta_3=\exp\left(2\pi i/3\right)$. Then  $\ringO_k=\ZZ[\zeta_3]$ is the ring of integers of $k$. Let \[\mathcal{C}:=\left\{c\in\mathfrak{O}_k;~c\neq 1 \text{ is square free and } c\equiv 1 \imod{\langle9\rangle} \right\}.\] 
For $c\in \mathcal{C}$, let $\zeta_{k(c^{1/3})}(s)$ be the Dedekind zeta function of the cubic field $k(c^{1/3})$. In analogy with \eqref{quadratic}, we define
\begin{equation}
\label{lc}
L_c(s)= \frac{\zeta_{k(c^{1/3})}(s)}{\zeta_k(s)}.
\end{equation}
We know that $L_{c}(s)=L(s,\chi_c)L(s,\overline{\chi_{c}})$, where $L(s,\chi_{c})$ is the Hecke $L$-function associated with some cubic character $\chi_{c}$ of the $\langle c\rangle$-ray class group of $k$. We will see in Section \ref{sec:cubic} that, for $c\in \mathcal{C}$, $\chi_c$ can be identified with $\left(\frac{.}{c}\right)_3$, the cubic residue character modulo $c$.
We set
\[ \mathcal{L}_{c}(s)=\begin{cases} 
      \log L_{c}(s) & \text{(Case 1),} \\
       {L_c^\prime}/{L_c}(s) & \text{(Case 2),} 
   \end{cases}
\]  and

\[ \mathcal{L}(s,\chi_{c})=\begin{cases} 
      \log L(s,\chi_{c}) & \text{(Case 1),} \\
       {L^\prime}/{L}(s,\chi_{c}) & \text{(Case 2).} 
   \end{cases}
\]  

Notice that $ \mathcal{L}_{c}(s)=2 \Re\left(\mathcal{L}(s,\chi_{c})\right)$ when $s$ is a real number. The main results of this paper are two theorems that describe the distribution of the values $\mathcal{L}_{c}(s)$ and $\Re\left(\mathcal{L}(s,\chi_{c})\right)$ for a fixed $s$ as $c$ varies over $\mathcal{C}$.

\begin{thrm}\label{mainthrm}
Let $\sigma>\frac12$. Let $\mathcal{N}(Y)$ be the the number of elements $c\in \mathcal{C}$ with norm not exceeding $Y$. There exists a smooth density function $M_{\sigma}$ such that  
\[\lim_{Y\to\infty}\frac{1}{\mathcal{N}(Y)}\#\left\{{c}\in \mathcal{C}: \norm(c)\leq Y\;\; \text{and}\;\;\ \mathcal{L}_{c}(\sigma) \leq z \right\}=\int_{-\infty}^{z}M_{\sigma}(t)\;dt.\]
The asymptotic distribution function $F_\sigma(z)=  \int_{-\infty}^{z} M_\sigma(t)\; dt$ can be constructed as an infinite convolution over prime ideals $\fp$ of $k$,
\[F_{\sigma}(z)=\mbox{*}_{\fp}\;F_{\sigma,\fp}(z),\]
where
\[F_{\sigma,\fp}(z)=\begin{cases}\frac{1}{\norm(\fp)+1}\delta(z)+\frac{1}{3}\left(\frac{\norm(\fp)}{\norm(\fp)+1}\right)\sum_{j=0}^{2}\delta_{-a_{\fp,j}}(z) & \text{if}\; \fp\nmid\langle3\rangle,\\ \delta_{a_{\fp,0}}(z) & \text{if}\; \fp=\langle1-\zeta_{3}\rangle.\end{cases}\] 
Here $\delta_{a}(z):=\delta(z-a)$, $\delta$ is the Dirac distribution, and
\[a_{\fp,j}:=a_{\fp,j}(\sigma) =\begin{cases} 
    2 \Re\left(\log(1-\zeta_{3}^{j}\norm(\fp)^{-\sigma})\right)   & \text{ in (Case 1),} \\
   2\Re\left(\frac{\zeta_{3}^{j}  \log\norm(\fp) }{\norm(\fp)^{\sigma}-\zeta_{3}^{j}}\right)   & \text{in (Case 2).} 
   \end{cases}
   \]
Moreover, the density function $M_\sigma$ can be constructed as the inverse Fourier transform of the characteristic function $\varphi_{F_\sigma}(y)$, which in (Case 1) is given by  
 \begin{equation*}
\varphi_{F_\sigma}(y)=\exp\left(-2iy\log(1-3^{-\sigma})\right)\prod_{\fp\nmid\langle3\rangle}
\left( \frac{1}{\norm(\fp)+1}+\frac{1}{3}\frac{\norm(\fp)}{\norm(\fp)+1}\sum_{j=0}^{2}\exp\left(-2iy\log\left|1-\frac{\zeta_{3}^{j}}{\norm(\fp)^{\sigma}}\right|\right) \right)
\end{equation*}
and in (Case 2) is given by 
\begin{equation*}
\varphi_{F_\sigma}(y)=\exp\left(-2iy\frac{\log 3}{3^{\sigma}-1}\right)\prod_{\fp\nmid\langle3\rangle} \left( \frac{1}{\norm(\fp)+1}+\frac{1}{3}\frac{\norm(\fp)}{\norm(\fp)+1}\sum_{j=0}^{2}\exp\left(-2iy\Re\left(\frac{\zeta_{3}^{j}   \log\norm(\fp)    }{\norm(\fp)^{\sigma}-\zeta_{3}^{j}}\right)\right) \right).
\end{equation*}
 \end{thrm}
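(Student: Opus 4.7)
The plan is to adapt the probabilistic framework of Ihara--Matsumoto \cite{I-M} and Mourtada--Murty \cite{M-M} to cubic Hecke $L$-functions over $k=\QQ(\sqrt{-3})$, with the key technical novelty being the replacement of GRH by an appropriate zero-density estimate. I would first construct an auxiliary probability space modelling the heuristic behaviour of $(\chi_c(\fp))_\fp$. For each prime $\fp$ of $\ringO_k$ with $\fp\nmid\langle 3\rangle$, let $X_\fp$ be a random variable taking the value $0$ with probability $1/(\norm(\fp)+1)$ and each of the cube roots of unity $1,\zeta_3,\zeta_3^2$ with probability $\tfrac{1}{3}\cdot\tfrac{\norm(\fp)}{\norm(\fp)+1}$; set $X_{\langle 1-\zeta_3\rangle}\equiv 1$, since $c\equiv 1\imod{\langle 9\rangle}$ forces $\chi_c$ to be trivial at the prime above $3$. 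Declare the $X_\fp$ mutually independent. The probabilities are calibrated so as to match, on the one hand, the density among squarefree ideals of having $v_\fp$ equal to $0$ versus $1$, and, on the other, the equidistribution of the cubic residue symbol across $\{1,\zeta_3,\zeta_3^2\}$. Define
\[ X_\sigma = \sum_\fp g_\sigma(X_\fp,\fp),\qquad g_\sigma(w,\fp)=\begin{cases} -2\Re\log\bigl(1-w\,\norm(\fp)^{-\sigma}\bigr) & \text{(Case 1),}\\[2pt] 2\Re\!\left(\dfrac{w\log\norm(\fp)}{\norm(\fp)^\sigma-w}\right) & \text{(Case 2).} \end{cases}\]
Independence yields the factorization of $\mathbb{E}[e^{iyX_\sigma}]$ stated in the theorem, and the atoms of $g_\sigma(X_\fp,\fp)$ produce precisely the local distributions $F_{\sigma,\fp}$.

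Having set up the limiting object, I would prove that the empirical distribution of $\mathcal{L}_c(\sigma)$ over $c\in\mathcal{C}$ with $\norm(c)\le Y$ converges to the law of $X_\sigma$ by invoking L\'evy's continuity theorem. Fix a truncation parameter $x>0$ and split
\[ \mathcal{L}_c(\sigma)=\mathcal{L}_c^{(x)}(\sigma)+R_c(x,\sigma),\]
where $\mathcal{L}_c^{(x)}(\sigma)$ collects the local contributions from $\fp$ with $\norm(\fp)\le x$. For the truncated piece I would compute
\[ \Phi_{Y,x}(y)=\frac{1}{\mathcal{N}(Y)}\sum_{\substack{c\in\mathcal{C}\\ \norm(c)\le Y}}\exp\bigl(iy\,\mathcal{L}_c^{(x)}(\sigma)\bigr),\]
using the identification $\chi_c(\fp)=\bigl(\tfrac{\cdot}{c}\bigr)_3\!(\pi)$ for $\fp=(\pi)$ together with cubic reciprocity, which converts the condition on $\chi_c(\fp)$ into a congruence on $c$ modulo $\pi$. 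An asymptotic count of squarefree $c\in\mathcal{C}$ in residue classes modulo a finite product of primes then gives, for each fixed $x$, the limit $\Phi_{Y,x}(y)\to\prod_{\norm(\fp)\le x}\varphi_{F_{\sigma,\fp}}(y)$ as $Y\to\infty$, locally uniformly in $y$.

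The main obstacle is bounding the tail $R_c(x,\sigma)$ on average as $x\to\infty$, uniformly in $Y$, without assuming GRH. I would use the zero-density estimate developed in Section~\ref{sec:zero-density} to show that the $c\in\mathcal{C}$ with $\norm(c)\le Y$ for which $L(s,\chi_c)$ has zeros in a sufficiently thin strip to the right of $\Re(s)=1/2$ form an exceptional set of density $o(1)$. On the complementary good set, a smoothed Perron/explicit-formula argument writes $R_c(x,\sigma)$ as a short sum over zeros plus a rapidly decaying remainder, and a second-moment bound yields $\sum_{c\in\mathcal{C},\,\norm(c)\le Y}|R_c(x,\sigma)|^2\ll_\sigma\mathcal{N}(Y)\,\varepsilon(x)$ with $\varepsilon(x)\to 0$ as $x\to\infty$. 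Combining the truncated convergence with this tail estimate and applying L\'evy's continuity theorem gives the existence of $F_\sigma$ with the claimed characteristic function.

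Finally, smoothness of $M_\sigma$ reduces to super-polynomial decay of $\varphi_{F_\sigma}(y)$. A Jessen--Wintner-style analysis of each local factor $\varphi_{F_{\sigma,\fp}}(y)$, exploiting the three incommensurable phases supplied by the cube roots of unity, yields $|\varphi_{F_{\sigma,\fp}}(y)|\le 1-c_\sigma\min\bigl(1,y^2/\norm(\fp)^{2\sigma}\bigr)$. Multiplying over $\fp$ with $\norm(\fp)\le|y|^{1/\sigma}$ then gives decay faster than any polynomial, so Fourier inversion produces $M_\sigma\in C^\infty(\RR)$.
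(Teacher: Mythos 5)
Your proposal takes a genuinely different route from the paper's. The paper works with the smoothed sum $\sum_{c}\exp(iy\mathcal{L}_c(\sigma))e^{-\norm(c)/Y}$, expands $\exp(iy\mathcal{L}_c(\sigma))$ as an absolutely convergent Dirichlet series via a contour shift (Lemmas \ref{lem:exp(iyLc)} and \ref{rep}), evaluates the resulting double sum over $c$ and over ideals directly using Luo's method, cubic reciprocity, the Heath--Brown cubic large sieve, and a Tauberian theorem, with the Blomer--Goldmakher--Louvel zero-density estimate controlling the error from the shifted contour on the bad set $\mathcal{Z}$; the characteristic function is then recognised as an Euler product (Proposition \ref{lem:euler}). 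You instead truncate the Euler product at primes with $\norm(\fp)\le x$, prove that the joint law of $(\chi_c(\fp))_{\norm(\fp)\le x}$ over $c\in\mathcal C$ matches the random model by residue-class counting and cubic reciprocity, and then try to dispose of the tail $R_c(x,\sigma)$ on average using the zero-density estimate together with an explicit formula. This truncate-and-discard-the-tail strategy is closer to the Granville--Soundararajan/Lamzouri circle of ideas and is a legitimate alternative framework; it also produces the infinite-convolution structure for free from independence, rather than via Theorem \ref{infinite-convolution}. That said, the second-moment tail bound $\sum_{c,\,\norm(c)\le Y}|R_c(x,\sigma)|^2\ll\mathcal N(Y)\varepsilon(x)$ on the good set is the technical crux of this route and is only asserted, not argued; in the range $\tfrac12<\sigma\le1$ this needs real work, roughly comparable to the paper's Sections \ref{sec:mean-value}--\ref{sec:eval-of-iii}.

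The smoothness step has a concrete error. Your claimed pointwise bound $|\varphi_{F_{\sigma,\fp}}(y)|\le 1-c_\sigma\min(1,y^2/\norm(\fp)^{2\sigma})$ cannot hold in the regime $\norm(\fp)^\sigma\le|y|$. First, there are not ``three incommensurable phases'': since $\zeta_3^2=\overline{\zeta_3}$ and $a_{\fp,j}$ is a real part, one has $a_{\fp,1}=a_{\fp,2}$, so the local factor is $\frac{1}{\norm(\fp)+1}+\frac{\norm(\fp)}{3(\norm(\fp)+1)}\bigl(e^{-iya_{\fp,0}}+2e^{-iya_{\fp,1}}\bigr)$, a trigonometric polynomial in only two phases. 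Second, for a fixed $\fp$ there are arbitrarily large $y$ for which both $ya_{\fp,0}$ and $ya_{\fp,1}$ are arbitrarily close to $2\pi\ZZ$, at which $|\varphi_{F_{\sigma,\fp}}(y)|$ is arbitrarily close to $1$; so no uniform saving $1-c_\sigma$ is available over the whole range $\norm(\fp)^\sigma\le|y|$. The correct argument, as in Proposition \ref{mainprop2}, does not extract a gain from every small prime; it selects a fixed window $\norm(\fp)^\sigma\asymp y$ in which the relevant phase difference is pinned inside an interval bounded away from $2\pi\ZZ$, giving a uniform bound $|\varphi_{F_{\sigma,\fp}}(y)|\le 0.8256$, and then uses the prime ideal theorem to get $\gg_\sigma y^{1/\sigma-\delta}$ such $\fp$, hence $|\varphi_{F_\sigma}(y)|\le\exp(-Cy^{1/\sigma-\delta})$. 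This decay is what you need to feed into Fourier inversion to conclude $M_\sigma\in C^\infty(\RR)$; the local bound you wrote would not deliver it.
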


 {The value $\mathcal{L}_{c}(1)$
 has some arithmetic significance. By the class number formula we know that} $$L_c(1)= \frac{(2\pi)^2 \sqrt{3}h_c R_c}{\sqrt{|D_c|}},$$
{where $h_c$, $R_c$, and $D_c=(-3)^5(\norm(c))^2$ (see \cite[p.~427]{X}) are respectively the class number, the regulator, and the discriminant of the cubic extension $K_c=k(c^{1/3})$. Since the number fields $K_c$ all have a fixed degree (namely $6$) over $\mathbb{Q}$, then by the Brauer-Siegel theorem 
\begin{equation}
\label{BS}
\log{(h_c R_c)} \sim \log{{|D_c|^{1/2}}},
\end{equation}
as $\norm(c) \rightarrow \infty$. Thus,  one obtains the following distribution result regarding the error term of \eqref{BS}} as a consequence of Theorem \ref{mainthrm} (Case 1).
\begin{cor} Let $E(c)= \log{(h_c R_c)} - \log{{|D_c|^{1/2}}}$. Then
\[\lim_{Y\to\infty}\frac{1}{\mathcal{N}(Y)}\#\left\{{c}\in \mathcal{C}: \norm(c)\leq Y\;\; \text{and}\;\;\ E(c) \leq z  \right\}=\int_{-\infty}^{z+\log(4\sqrt{3} \pi^2)}M_{1}(t)\;dt,\]
where $M_1(t)$ is the smooth function described in Theorem \ref{mainthrm} (Case 1) for $\sigma=1$.
\end{cor}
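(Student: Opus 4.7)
The plan is to reduce the corollary directly to Theorem \ref{mainthrm} (Case 1) at $\sigma=1$ via the analytic class number formula, since the only quantities that link $E(c)$ to $\mathcal{L}_c(1)$ are the constants appearing in that formula.

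First I would take the logarithm of the class number formula
\[
L_c(1)=\frac{(2\pi)^2\sqrt{3}\,h_c R_c}{\sqrt{|D_c|}},
\]
which yields
\[
\mathcal{L}_c(1)=\log L_c(1)=\log\!\bigl(4\sqrt{3}\,\pi^2\bigr)+\log(h_c R_c)-\tfrac12\log|D_c|=\log\!\bigl(4\sqrt{3}\,\pi^2\bigr)+E(c).
\]
Hence the event $E(c)\le z$ coincides exactly with the event $\mathcal{L}_c(1)\le z+\log(4\sqrt{3}\,\pi^2)$, so
\[
\#\bigl\{c\in\mathcal{C}:\norm(c)\le Y,\;E(c)\le z\bigr\}=\#\bigl\{c\in\mathcal{C}:\norm(c)\le Y,\;\mathcal{L}_c(1)\le z+\log(4\sqrt{3}\,\pi^2)\bigr\}.
\]

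Next I would apply Theorem \ref{mainthrm} (Case 1) at the admissible value $\sigma=1>\tfrac12$, with $z$ replaced by $z+\log(4\sqrt{3}\,\pi^2)$. This immediately gives
\[
\lim_{Y\to\infty}\frac{1}{\mathcal{N}(Y)}\#\bigl\{c\in\mathcal{C}:\norm(c)\le Y,\;E(c)\le z\bigr\}=\int_{-\infty}^{z+\log(4\sqrt{3}\,\pi^2)}M_1(t)\,dt,
\]
which is the claimed asymptotic distribution.

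There is essentially no technical obstacle: the argument is a direct change of variables using the Dedekind class number formula, so all the analytic content (existence of $M_\sigma$, its smoothness, the explicit product for the characteristic function, etc.) is imported verbatim from Theorem \ref{mainthrm}. The only things worth double-checking are the numerical constant $\log(4\sqrt{3}\,\pi^2)$ (coming from $(2\pi)^2\sqrt{3}$ in the class number formula for the sextic field $K_c=k(c^{1/3})$ with its specific unit rank and signature data) and the identity $|D_c|^{1/2}=3^{5/2}\norm(c)$ used in \cite[p.~427]{X}; the latter only affects the constant absorbed into $\mathcal{L}_c(1)$ and does not enter the final shift.
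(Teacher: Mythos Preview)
Your proposal is correct and follows exactly the approach the paper intends: the paper states the corollary as an immediate consequence of Theorem \ref{mainthrm} (Case 1) via the class number formula, without giving further details. Your verification that $\log L_c(1)=\log(4\sqrt{3}\,\pi^2)+E(c)$ and the subsequent application of the theorem with the shifted threshold is precisely the argument the paper has in mind.
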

{Another application is related to the Euler-Kronecker constant of a number field $K$ which is defined by}
$$\gamma_K= \lim_{s\rightarrow 1} \left(\frac{\zeta_K^\prime(s)}{\zeta_K(s)}+\frac{1}{s-1}  \right).$$
From \eqref{lc} one can see that
$$\frac{L_c^\prime(1)}{L_c(1)}= \gamma_{K_c}-\gamma_k.$$ 
{Since $\gamma_k$ is fixed, we have the following corollary of Theorem \ref{mainthrm} (Case 2).}
\begin{cor} There exists a smooth function $M_1(t)$ (as described in Theorem \ref{mainthrm} (Case 2) for $\sigma=1$) such that
\[\lim_{Y\to\infty}\frac{1}{\mathcal{N}(Y)}\#\left\{{c}\in \mathcal{C}: \norm(c)\leq Y\;\; \text{and}\;\;\ \gamma_{K_c}\leq z  \right\}=\int_{-\infty}^{z-\gamma_k}M_{1}(t)\;dt.\] 
\end{cor}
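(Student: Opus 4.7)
The plan is to reduce the statement directly to Theorem \ref{mainthrm} (Case 2) evaluated at $\sigma = 1$, using the identity $L_c'(1)/L_c(1) = \gamma_{K_c} - \gamma_k$ already noted in the paragraph preceding the corollary. The only real content is to justify that identity rigorously, after which the corollary is just a translation of the threshold in the main theorem by the fixed real constant $\gamma_k$.

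First I would verify the Euler-Kronecker identity. Since $\zeta_{K_c}(s)$ and $\zeta_k(s)$ each have a simple pole at $s=1$, their Laurent expansions take the form
\[
\zeta_{K_c}(s) = \frac{\alpha_c}{s-1} + \alpha_c\gamma_{K_c} + O(s-1), \qquad \zeta_k(s) = \frac{\alpha}{s-1} + \alpha\gamma_k + O(s-1),
\]
where by definition $\gamma_{K_c}$ and $\gamma_k$ are the constant terms of $\zeta_{K_c}'/\zeta_{K_c} + 1/(s-1)$ and $\zeta_k'/\zeta_k + 1/(s-1)$ at $s=1$. Taking the logarithmic derivative of $L_c(s) = \zeta_{K_c}(s)/\zeta_k(s)$ and evaluating at $s=1$, the two $-1/(s-1)$ singularities cancel, leaving $L_c'(1)/L_c(1) = \gamma_{K_c} - \gamma_k$. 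This requires $L_c(1) \neq 0$, which follows from the factorization $L_c(s) = L(s,\chi_c)L(s,\overline{\chi_c})$ and the non-vanishing of Hecke $L$-functions of non-trivial characters at $s=1$.

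With this in hand, the event $\gamma_{K_c} \leq z$ is equivalent to $\mathcal{L}_c(1) \leq z - \gamma_k$ in the notation of Case 2, so
\[
\frac{1}{\mathcal{N}(Y)}\#\{c \in \mathcal{C} : \norm(c) \leq Y,\; \gamma_{K_c} \leq z\} = \frac{1}{\mathcal{N}(Y)}\#\{c \in \mathcal{C} : \norm(c) \leq Y,\; \mathcal{L}_c(1) \leq z - \gamma_k\}.
\]
Applying Theorem \ref{mainthrm} (Case 2) with $\sigma = 1$ and threshold $z - \gamma_k$ yields the limit $\int_{-\infty}^{z-\gamma_k} M_1(t)\, dt$, as required.

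The main obstacle is not in this corollary itself but in the underlying Theorem \ref{mainthrm}; the corollary is a one-line translation once the Laurent-coefficient identity is in place. The only mildly subtle point is ensuring $L_c'/L_c$ is well-defined at $s=1$, which is handled by the non-vanishing of the relevant Hecke $L$-functions there.
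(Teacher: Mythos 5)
Your proposal is correct and follows essentially the same route as the paper: the corollary is obtained by applying Theorem \ref{mainthrm} (Case 2) at $\sigma=1$ together with the identity $L_c'(1)/L_c(1)=\gamma_{K_c}-\gamma_k$, which the paper records just before the corollary, and then translating the threshold by the fixed constant $\gamma_k$. The only thing you add is an explicit Laurent-expansion verification of that identity and a note on the non-vanishing of $L_c(1)$, both of which the paper leaves implicit.
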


The following theorem provides information on the distribution of the values $\Re\left(\mathcal{L}(s,\chi_{c})\right)$.

 \begin{thrm}\label{thm:2}
 Let $s\in\mathbb{C}$ be such that $\Re(s)>\frac12$. 
 There exists a distribution function $F_{s}(z)$ such that
\[\lim_{Y\to\infty}\frac{1}{\mathcal{N}(Y)}\#\left\{{c}\in \mathcal{C}: \norm(c)\leq Y\;\; \text{and}\;\;\ 2\Re\left(\mathcal{L}(s,\chi_{c})\right) \leq z \right\}=F_{s}(z).\]
The asymptotic distribution function $F_s$ can be constructed as an infinite convolution $\mbox{*}_{\fp}\;F_{s,\fp}(z)$, over prime ideals $\fp$ of $k$, where $F_{s,\fp}(z)$ is defined with similar formulas as in 
Theorem  \ref{mainthrm}.
Moreover, the distribution function $F_s$ has the characteristic function $\varphi_{F_s}(y)$, which in (Case 1) is given by  
 \begin{equation*}
\varphi_{F_s}(y)= \exp\left(-2iy\log\left|1-3^{-s}\right|\right)\prod_{\fp\nmid\langle3\rangle}
\left( \frac{1}{\norm(\fp)+1}+\frac{1}{3}\frac{\norm(\fp)}{\norm(\fp)+1}\sum_{j=0}^{2}\exp\left(-2iy\log\left|1-\frac{\zeta_{3}^{j}}{\norm(\fp)^{s}}\right|\right) \right)
\end{equation*}
and in (Case 2) is given by 
\begin{equation*}
\varphi_{F_s}(y)= \exp\left(-2iy\Re\left(\frac{\log 3}{3^{s}-1}\right)\right)\prod_{\fp\nmid\langle3\rangle} \left( \frac{1}{\norm(\fp)+1}+\frac{1}{3}\frac{\norm(\fp)}{\norm(\fp)+1}\sum_{j=0}^{2}\exp\left(-2iy\Re\left(\frac{\zeta_{3}^{j}   \log\norm(\fp)    }{\norm(\fp)^{s}-\zeta_{3}^{j}}\right)\right) \right).
\end{equation*}

\end{thrm}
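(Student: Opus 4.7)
The plan is to follow the blueprint used for Theorem \ref{mainthrm}, adapted to a general complex $s$ with $\Re(s)>\tfrac12$. The key point is that the relation $\mathcal{L}_c(s)=2\Re(\mathcal{L}(s,\chi_c))$, valid when $s$ is real, fails for general complex $s$, so the quantity $2\Re(\mathcal{L}(s,\chi_c))$ must be analysed directly rather than deduced from Theorem \ref{mainthrm}. Nevertheless, the structure of the argument and of the limiting characteristic function carries over intact, so the same three-step strategy applies.

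First, I would expand $\mathcal{L}(s,\chi_c)$ via the Euler product of $L(s,\chi_c)$ and truncate the resulting sum over prime ideals at a parameter $X=X(Y)$ growing slowly with $Y$. Since $c\equiv 1\imod{\langle 9\rangle}$ for every $c\in\mathcal{C}$, one has $\chi_c(\langle 1-\zeta_3\rangle)=1$, which produces the deterministic factor in front of the Euler product in $\varphi_{F_s}(y)$. Second, using cubic reciprocity in $\ZZ[\zeta_3]$ and a lattice-point count over $\mathcal{C}\cap\{\norm(c)\leq Y\}$, I would show that the values $\chi_c(\fp)$ for prime ideals $\fp\nmid\langle 3\rangle$ with $\norm(\fp)\leq X$ become asymptotically independent, with limiting joint distribution assigning mass $1/(\norm(\fp)+1)$ to $\chi_c(\fp)=0$ and mass $\tfrac{1}{3}(\norm(\fp)/(\norm(\fp)+1))$ to each of $\chi_c(\fp)\in\{1,\zeta_3,\zeta_3^2\}$. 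Third, combining these two inputs gives the average of $\exp(2iy\Re(\mathcal{L}(s,\chi_c)))$ over $c\in\mathcal{C}$ with $\norm(c)\leq Y$ as a finite product over $\norm(\fp)\leq X$; letting $X,Y\to\infty$, this product converges pointwise in $y$ to the infinite product displayed in the statement. L\'evy's continuity theorem then produces the distribution function $F_s$, and the representation $F_s=\mbox{*}_{\fp}F_{s,\fp}$ follows from Jessen--Wintner theory applied to the factorisation of $\varphi_{F_s}$.

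The main obstacle is the truncation step in the range $\tfrac12<\Re(s)\leq 1$, where the Dirichlet series for $\mathcal{L}(s,\chi_c)$ does not converge absolutely. Without assuming GRH, the error committed by restricting to $\norm(\fp)\leq X$ must be controlled by a zero-density estimate for the family $\{L(s,\chi_c):c\in\mathcal{C},\,\norm(c)\leq Y\}$, in the spirit of the argument outlined in Section \ref{sec:zero-density}. That section is formulated for real $\sigma$, so the adaptation requires checking that the contour-integral and second-moment estimates go through with the real variable $\sigma=\Re(s)$ replacing the original parameter; since $s$ is fixed in the statement, no uniformity in $\Im(s)$ is needed and the argument transplants with only cosmetic changes. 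Once the truncation is justified, the extraction of the characteristic function and the construction of $F_s$ is formal and parallels the end of the proof of Theorem \ref{mainthrm}.
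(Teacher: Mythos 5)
Your outline matches the approach the paper takes. In fact, the paper does not write out a proof of Theorem~\ref{thm:2} at all: Remark (a) states that ``the proof of both theorems follows very similar arguments'' and only the proof of Theorem~\ref{mainthrm} is given, with the understanding that one replaces $\mathcal{L}_c(\sigma)=2\Re(\mathcal{L}(\sigma,\chi_c))$ by $2\Re(\mathcal{L}(s,\chi_c))$ and reruns the machinery of Sections~4 and~5. You correctly identify the two non-cosmetic points: that the complex case must be treated directly rather than deduced from Theorem~\ref{mainthrm}, and that the decay estimate of Proposition~\ref{mainprop2} is unavailable for non-real $s$ (which is exactly why Theorem~\ref{thm:2} claims only a distribution function, not a smooth density), so one relies on the Dirichlet-series representation to get continuity of $\varphi_{F_s}$ at $y=0$ and then applies L\'evy and Theorem~\ref{infinite-convolution}.

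Two small remarks on framing. First, the paper does not prove ``asymptotic independence'' of the vector $(\chi_c(\fp))_{\norm(\fp)\le X}$ as a separate step; instead it computes the first moment of $\exp(iy\mathcal{L}_c(\sigma))$ directly by expanding $\exp(iy\mathcal{L}_c)$ into a Dirichlet series through the coefficients $\lambda_y(\fa)$ (Lemma~\ref{lem:exp(iyLc)}), extracting the main term from the ``cube'' contribution after applying cubic reciprocity and Lemma~\ref{lem:luo-lemma}, and controlling the non-cube terms via the Heath--Brown cubic large sieve and the Heath-Brown--Patterson estimate. The product structure of $\widetilde{M}_\sigma$ then emerges from the Euler-factorization in Proposition~\ref{lem:euler} rather than from an explicit joint equidistribution lemma; your description is a valid heuristic gloss but, carried out literally, would require a multidimensional equidistribution argument the paper does not use. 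Second, the paper uses the smooth weight $\exp(-\norm(\fa\fb)/X)$ rather than a sharp truncation at $\norm(\fp)\le X$, which is what allows the clean contour shift in Lemma~\ref{rep} and the appeal to the zero-density bound (Lemma~\ref{lem:zer-density}). These are differences of technical implementation, not of strategy, so the proposal is sound.
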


 \begin{remarks}\quad
 \begin{enumerate}[label=(\alph*)]
 \item \emph{The proof of both theorems follow very similar arguments. In this paper, we only show the proof of Theorem \ref{mainthrm}. We mention here that we are able to prove the existence of a smooth density function in Theorem \ref{mainthrm} by establishing a suitable upper bound for the characteristic function $\varphi_{F_s}(y)$ when $s$ is real (see Proposition \ref{mainprop2}).}
 
\item \emph{For $\sigma>1$, from expression \eqref{Lc} for $L_c(\sigma)$ we can deduce that}
\begin{equation}
\label{case2}
\log{L_c(\sigma)}=-2\log\left (1-3^{-\sigma}\right)-2\sum_{\fp \nmid \langle3\rangle} \log{\left| 1- \frac{\left(\frac{c}{\pi} \right)_3 }{\norm(\fp)^\sigma} \right|},
\end{equation}
\emph{where $\pi$ is the unique generator of the prime ideal $\fp$ with the property that $\pi \equiv 1 \imod{\langle3\rangle}$, and $\left(\frac{c}{.} \right)_3$ is the cubic residue symbol of $c$ modulo $\pi$. 
Thus, the values $\log{L_c(1)}$ can be modelled as a sum over $\fp\nmid \langle3\rangle$ of the random variables }
$$X_\fp=
\begin{cases}

 0& {\rm with~ probability}~ \frac{1}{\norm(\fp)+1},\\
   \log{\left| 1- \frac{\zeta_3 }{\norm(\fp)^\sigma} \right|}^{-2} &{\rm with~ probability}~ \frac{1}{3}\frac{\norm(\fp)}{\norm(\fp)+1},\\
 \log{\left| 1- \frac{\zeta_3^2 }{\norm(\fp)^\sigma} \right|}^{-2} &{\rm with~ probability}~ \frac{1}{3}\frac{\norm(\fp)}{\norm(\fp)+1},\\  
 \log{\left| 1- \frac{1 }{\norm(\fp)^\sigma} \right|}^{-2} &{\rm with~ probability}~ \frac{1}{3}\frac{\norm(\fp)}{\norm(\fp)+1}.
\end{cases}
$$
 \emph{The heuristic supporting the above probabilities is as follows. If $\fp=\langle \pi \rangle$ is a prime ideal and $c$ is a square free element, then $c$ lies in one of the $\norm (\fp^2)-1$ residue classes modulo $\fp^2$. Now since exactly $\norm(\fp)-1$ of these residue classes are multiples of $\fp$, then under the assumption of uniform distribution of $c$ in residue classes modulo $\fp^2$, we conclude that the probability that $\pi \mid c$ is $1/(\norm(\fp)+1)$. More precisely, we get}
 $$\lim_{Y\rightarrow \infty} \frac{\#\{\norm(c)\leq Y; c\in \mathcal{C}~{\rm and}~ \pi \mid c\}}{\#\{\norm(c)\leq Y; c\in \mathcal{C} \}}=\frac{1}{\norm(\fp)+1}. $$
\emph{Finally, under the assumption of the uniform distribution of the values of the cubic residue symbol $\left(\frac{c}{\pi} \right)_3$ among the elements $c$ with $(c, \pi)=1$, for $i=1, 2, 3$, we have}
 $$\lim_{Y\rightarrow \infty} \frac{\#\{\norm(c)\leq Y; c\in \mathcal{C}~{\rm and}~\left(\frac{c}{\pi} \right)_3=\zeta_3^i \}}{\#\{\norm(c)\leq Y; c\in \mathcal{C}~{\rm and}~ (c, \pi)=1 \}}=\frac{1}{3}\frac{\norm(\fp)}{\norm(\fp)+1}. $$
\emph{A similar heuristic for (Case 2) also applies by finding the derivative of both sides of \eqref{case2} and establishing the identity}
$$\frac{L_c^\prime(\sigma)}{L_c(\sigma)}=-\frac{2\log{3}}{3^\sigma-1} -2\sum_{\fp \nmid \langle3\rangle}  \Re\left( \frac{\left(\frac{c}{\pi} \right)_3 \log{\norm(\fp)}}{\norm(\fp)^\sigma-\left(\frac{c}{\pi} \right)_3 } \right).$$

\item \emph{Our results establish one dimensional distribution theorems for the values of some real functions
associated with extensions of number fields. A natural next step in this research is extending
such results to the corresponding complex-valued functions. In \cite{I-M1} and \cite{I-M} such two dimensional distribution
theorems (unconditionally in \cite{I-M1} and conditionally on GRH in \cite{I-M}) for the values of the logarithm or the logarithmic derivative of $L$-functions
on average over certain Dirichlet characters were established. It would be worthwhile to investigate the two dimensional distribution results for the function $\mathcal{L}(s,\chi)$ as $\chi$ varies over the family of characters considered in Theorem \ref{MM-theorem}, Theorem \ref{mainthrm}, and more generally the higher order characters studied in \cite{BGL}. }

\item \emph {If we let $m_k(\sigma)$ be the non-negative $k$-integral moments of $\mathcal{L}_{{c}}(\sigma)$, one may try to obtain the characteristic function of the distribution function $F_\sigma$  by computing $$\varphi_{F_\sigma}(y)= \sum_{k=0}^{\infty} \frac{m_k(\sigma)}{k!} (iy)^k,$$ 
 for $y$ such that the above series is absolutely convergent (see \cite[Lemma 1.44]{elliott-book-I}). The authors of \cite{CK} have applied this method for certain families of number fields $K$ to obtain results (some conditional) on the distribution of the values at $\sigma=1$ of the logarithm and logarithmic derivative of the Artin $L$-function $\frac{\zeta_K(\sigma)}{\zeta(\sigma)}$ (\cite[Corollaries 3.14 and 5.4]{CK}).
Our method has the advantage of representing the characteristic function as a convenient product on the range $\sigma > 1/2$.  }

\end{enumerate}
 \end{remarks}

Next we make some comments regarding the method of proof of Theorem \ref{mainthrm}. 
For a real-valued arithmetic function $f(n)$ and $y\in \mathbb{R}$, assume that
\begin{equation}\label{eqn:standard-sum}\lim_{N\rightarrow \infty} \frac{1}{N} \sum_{n\leq N} \exp(iyf(n))=\widetilde{M}(y),\end{equation}
where $\widetilde{M}(y)$ is continuous at $0$. Then it is known that $f$ possesses an asymptotic distribution function, i.e., 
$$\lim_{N\rightarrow \infty}\frac{\#\{n\leq N;~f(n) \leq z\}}{N}= F(z),$$
for all $z$ in which $F$ is continuous (see \cite[p. 432, Theorem 2.6]{T}). 
In this paper we consider a weighted version of \eqref{eqn:standard-sum}.

\begin{lem}
\label{mainlemma2}
Let $f$ be a real arithmetic function. Suppose that, as $N\rightarrow \infty$, the functions
$$\frac{\displaystyle{\sum_{n=1}^{\infty}} e^{iy f(n)}e^{-n/N}}{\displaystyle{\sum_{n=1}^{\infty}}e^{-n/N}} $$
converge point-wise on $\mathbb{R}$ to a function $\widetilde{M}(y)$ which is continuous at $0$. Then $f$ possesses a distribution function $F$. In this case, $\widetilde{M}$ is the characteristic function  of $F$. Moreover, if 
\begin{equation}
\label{decay}
\left|\widetilde{M}(y)\right|\leq \exp\left(-\eta\left|y\right|^{\gamma}\right),
\end{equation}
for some $\eta,\gamma>0$,
then $F(z)=\int_{-\infty}^{z}M(t)dt$ for a smooth function $M$, where
\begin{equation}
\label{inversion}
M(z)=(1/2\pi)\int_{\mathbb{R}}\exp\left(-izy\right)\widetilde{M}(y)dy.
\end{equation}
\end{lem}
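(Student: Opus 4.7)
The plan is to recognize the weighted sums in the hypothesis as characteristic functions of genuine probability measures and then invoke L\'{e}vy's continuity theorem. For each $N$, set $P_N(\{n\}) = e^{-n/N}\big/\sum_{m=1}^{\infty} e^{-m/N}$; this defines a probability measure on $\mathbb{N}$, and its pushforward $\mu_N := P_N \circ f^{-1}$ is a probability measure on $\mathbb{R}$ whose characteristic function $\widehat{\mu_N}(y)$ is exactly the ratio appearing in the hypothesis. So the assumption becomes: the characteristic functions $\widehat{\mu_N}(y)$ converge pointwise on $\mathbb{R}$ to $\widetilde{M}(y)$, with $\widetilde{M}$ continuous at $0$.

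At this point I would invoke L\'{e}vy's continuity theorem: since $\widetilde{M}$ is continuous at the origin and is the pointwise limit of a sequence of characteristic functions of probability measures, L\'{e}vy's theorem guarantees that the family $\{\mu_N\}$ is tight, that $\widetilde{M}$ is itself the characteristic function of some probability measure $\mu$ on $\mathbb{R}$, and that $\mu_N \Rightarrow \mu$ weakly. Defining $F(z) = \mu((-\infty, z])$, weak convergence yields $\mu_N((-\infty, z]) \to F(z)$ at every continuity point of $F$, so $f$ possesses the distribution function $F$ in the weighted sense implicit in the hypothesis, and $\widetilde{M}$ is its characteristic function.

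For the smoothness and density claims, I would use Fourier inversion together with the decay assumption \eqref{decay}. The bound $|\widetilde{M}(y)| \leq \exp(-\eta|y|^{\gamma})$ immediately implies $\widetilde{M} \in L^{1}(\mathbb{R})$, so the inversion formula \eqref{inversion} defines a continuous function $M$; by the classical Fourier-inversion theorem for probability measures whose characteristic function is integrable, $\mu$ is absolutely continuous with density $M$, and hence $F(z) = \int_{-\infty}^{z} M(t)\,dt$. The same bound makes $|y|^{k}\,|\widetilde{M}(y)|$ integrable for every $k \geq 0$, which legitimizes repeated differentiation under the integral sign and produces
\[
M^{(k)}(z) = \frac{1}{2\pi}\int_{\mathbb{R}} (-iy)^{k} e^{-izy}\, \widetilde{M}(y)\, dy,
\]
so $M \in C^{\infty}(\mathbb{R})$.

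The only genuinely nontrivial ingredient is L\'{e}vy's continuity theorem, whose hypothesis (pointwise convergence of the characteristic functions together with continuity of the limit at $y = 0$) is precisely what the lemma assumes; the remainder is a routine Fourier-analytic exercise. The weight $e^{-n/N}$ is chosen so that $P_N$ is automatically a probability measure on $\mathbb{N}$, which sidesteps the Tauberian complications that would arise if one worked instead with the unweighted normalization $\#\{n \leq N\}/N$; translating the resulting weighted distribution result into the natural counting statement used in Theorem \ref{mainthrm} is a separate task handled outside of this lemma.
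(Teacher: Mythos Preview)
Your L\'{e}vy-continuity argument is essentially the same as the paper's for the weighted part, and your treatment of the smoothness/density via the decay bound \eqref{decay} is also fine (the paper packages this as parts (ii) and (iii) of Theorem \ref{Levy}, but it is the same Fourier-analytic content).

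However, there is a genuine gap. The phrase ``$f$ possesses a distribution function $F$'' in the lemma refers to the paper's definition given just before the proof: it means that the \emph{unweighted} counts
\[
F_N(z)=\frac{\#\{n\leq N:\ f(n)\leq z\}}{N}
\]
converge weakly to $F$. Your argument only establishes the weighted statement $\mu_N\Rightarrow \mu$, i.e.
\[
\frac{\sum_{n=1}^{\infty}\psi_{f,z}(n)e^{-n/N}}{\sum_{n=1}^{\infty}e^{-n/N}}\longrightarrow F(z)\quad(z\in\mathcal{C}(F)),
\]
and you explicitly defer the passage to the natural density to ``outside of this lemma.'' That is not where the paper places it: the Tauberian step is part of the lemma, and the proof of Theorem \ref{mainthrm} simply invokes Lemma \ref{mainlemma2} to obtain the unweighted conclusion directly. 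The paper bridges the gap by applying a Tauberian theorem of Hardy and Littlewood (Abel-to-Ces\`aro; \cite[Theorem 98]{H}) to the bounded sequence $\psi_{f,z}(n)\in\{0,1\}$: since
\[
\lim_{N\to\infty}\frac{\sum_{n\geq 1}\psi_{f,z}(n)e^{-n/N}}{\sum_{n\geq 1}e^{-n/N}}=F(z),
\]
the Tauberian theorem upgrades this Abel limit to the Ces\`aro limit $N^{-1}\sum_{n\leq N}\psi_{f,z}(n)\to F(z)$, which is exactly the unweighted statement. Without this step your proof is incomplete.
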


In order to verify the conditions of the above lemma for the cases described in Theorem \ref{mainthrm}, we establish the following two propositions. 

\begin{prop}\label{mainprop1}
Let \[\mathcal{N}^{*}(Y)=\sum_{c\in\mathcal{C}}\exp(-\norm(c)/Y).\] Fix $\sigma>\frac12$ and $y\in\mathbb{R}$. Let $\widetilde{M}_\sigma(y)$ be the function given by one of the product formulas in Theorem \ref{mainthrm}. Then
\begin{equation*}
\lim_{Y\to\infty}\frac{1}{\mathcal{N}^{*}(Y)} \sum_{{{c}}\in \mathcal{C}}^{\star}\exp\left(iy\mathcal{L}_{{c}} (\sigma)\right)\exp(-\norm({c})/Y)=\widetilde{M}_{\sigma}(y),
\end{equation*} 
where $\star$ indicates that the sum is over $c$ such that $L_c(\sigma)\neq 0$. 
\end{prop}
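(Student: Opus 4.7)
The plan is to (i) express $\exp(iy\mathcal{L}_c(\sigma))$ as a product over prime ideals of $k$ via the Euler product of $L(s,\chi_c)L(s,\overline{\chi_c})$, (ii) compute the weighted average over $c$ by exploiting the equidistribution of the cubic symbols $\chi_c(\fp)=\left(\frac{c}{\pi}\right)_3$ as $c$ varies in $\mathcal{C}$, and (iii) identify the limit with the product $\widetilde{M}_\sigma(y)$ in Theorem~\ref{mainthrm}.

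For $\sigma>1$ the Euler product converges absolutely and yields
$$\mathcal{L}_c(\sigma)=g_{\langle1-\zeta_{3}\rangle}(\sigma)+\sum_{\fp\nmid\langle3\rangle}g_\fp(\chi_c(\fp);\sigma),$$
where $g_\fp(\cdot;\sigma)$ is the real-valued local factor ($-2\log|1-\chi_c(\fp)\norm(\fp)^{-\sigma}|$ in Case~1 and the corresponding derivative expression in Case~2), and where $\chi_c(\langle1-\zeta_3\rangle)=1$ since $c\equiv1\imod{\langle9\rangle}$. Because $\chi_c(\fp)\in\{0,1,\zeta_3,\zeta_3^2\}$, the exponential factors as
$$\exp(iy\mathcal{L}_c(\sigma))=\exp(iyg_{\langle1-\zeta_{3}\rangle}(\sigma))\prod_{\fp\nmid\langle3\rangle}\exp(iyg_\fp(\chi_c(\fp);\sigma)).$$
For $\tfrac12<\sigma\leq1$, where the Euler product diverges, I would introduce a truncation parameter $X=X(Y)\to\infty$ and split $\mathcal{L}_c(\sigma)=\mathcal{L}_c^{\leq X}(\sigma)+E_c^{>X}(\sigma)$, controlling the tail $E_c^{>X}(\sigma)$ via the zero-density estimate for the family $\{L(s,\chi_c)\}_{c\in\mathcal{C}}$ established in Section~\ref{sec:zero-density}. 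This would give $\exp(iyE_c^{>X}(\sigma))=1+o(1)$ for all but $o(\mathcal{N}^*(Y))$ values of $c$, reducing the analysis to the truncated product.

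For fixed $X$, the truncated product depends on $c$ only through its residue class modulo $\fm:=\langle9\rangle\cdot\prod_{\norm(\fp)\leq X}\fp^2$. Partitioning the sum by residue classes mod $\fm$ and applying a weighted equidistribution statement for squarefree $c\in\mathcal{C}$ (obtained by M\"obius inversion to remove the squarefree condition, together with character-sum estimates on $\OO_k$ against the smooth weight $\exp(-\norm(c)/Y)$) shows that the averages decouple into independent local contributions. The local density computation realises the heuristic stated after Theorem~\ref{thm:2}: the weighted proportion of $c$ with $\fp\mid c$ is $\frac{1}{\norm(\fp)+1}+o(1)$, and for each $j\in\{0,1,2\}$ the weighted proportion with $\chi_c(\fp)=\zeta_3^j$ is $\frac{1}{3}\frac{\norm(\fp)}{\norm(\fp)+1}+o(1)$. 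Consequently, as $Y\to\infty$ the weighted average converges to
$$\exp(iyg_{\langle1-\zeta_{3}\rangle}(\sigma))\prod_{\substack{\fp\nmid\langle3\rangle\\ \norm(\fp)\leq X}}\left(\frac{1}{\norm(\fp)+1}+\frac{1}{3}\frac{\norm(\fp)}{\norm(\fp)+1}\sum_{j=0}^{2}\exp(iyg_\fp(\zeta_3^j;\sigma))\right),$$
and letting $X\to\infty$ recovers $\widetilde{M}_\sigma(y)$; absolute convergence of the infinite product follows from the $O(\norm(\fp)^{-2\sigma})$ decay of each local factor after cancellation of the leading constant term.

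The main obstacle is the critical-strip case $\tfrac12<\sigma\leq1$: one needs a zero-density input strong enough to bound $E_c^{>X}(\sigma)$ for all but a negligible subset of $c$ with $\norm(c)\leq Y$, while $X=X(Y)$ is simultaneously large enough for the partial Euler product to approximate $\widetilde{M}_\sigma(y)$ uniformly, yet small enough that the equidistribution argument modulo $\fm$ remains valid (the error in character-sum estimates grows with $\norm(\fm)$). Balancing these three asymptotic scales — the role of Section~\ref{sec:zero-density} — is the technical heart of the argument.
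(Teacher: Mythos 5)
Your outline follows the natural ``random model'' heuristic, but the central step --- approximating $\mathcal{L}_c(\sigma)$ by a truncated Euler sum over primes of norm $\leq X$ and then partitioning the $c$-sum by residue classes modulo $\fm = \langle 9\rangle\prod_{\norm(\fp)\leq X}\fp^2$ --- has a parameter conflict that makes it unworkable for $\tfrac12<\sigma\leq 1$. The modulus has $\norm(\fm)\asymp\exp\bigl(2\sum_{\norm(\fp)\leq X}\log\norm(\fp)\bigr)\asymp e^{2X}$, so equidistribution of $c$ with $\norm(c)\leq Y$ among residue classes modulo $\fm$ (even via M\"obius and character-sum estimates of Heath-Brown--Patterson type) forces $\norm(\fm)\ll Y^{1-\epsilon}$, hence $X\ll\tfrac12\log Y$. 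On the other hand, for the tail $E_c^{>X}(\sigma)$ to be $o(1)$ for all but $o(\mathcal{N}^*(Y))$ values of $c$ --- even after discarding the bad set supplied by the zero-density estimate --- one must approximate the \emph{analytic continuation} of $\log L(\sigma,\chi_c)$ by its Dirichlet polynomial of length $X$, and the error for a good $c$ is of order $\log(\norm(c))/X^{\sigma-\frac12}$. With $X\asymp\log Y$ and $\norm(c)$ up to $Y$, this error is $(\log Y)^{\frac32-\sigma}$, which blows up for every $\sigma\leq 1$. The two constraints cannot be met simultaneously; and replacing the pointwise bound by an $L^1$ or $L^2$ estimate for $E_c^{>X}$ does not help, because $E_c^{>X}$ is defined through the analytic continuation and any attempt to quantify it reinstates the same zero-density/contour machinery with $X$ required to be at least $(\log Y)^{2/(2\sigma-1)}$.

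The paper avoids this altogether by never conditioning on residue classes of $c$. Instead it expands $\exp(iy\mathcal{L}_c(\sigma))$ --- not $\mathcal{L}_c(\sigma)$ --- as a two-variable Dirichlet series $\sum_{\fa,\fb}\lambda_y(\fa)\lambda_y(\fb)\chi_c(\fa\fb^2)\norm(\fa\fb)^{-\sigma}$ (Lemma~\ref{lem:exp(iyLc)}), inserts a smooth damping $\exp(-\norm(\fa\fb)/X)$ via a Perron/Mellin contour shift (Lemma~\ref{rep}), and controls the shifted integral for the zero-density-good $c$ through the Borel--Carath\'eodory and Hadamard three-circles bounds of Lemma~\ref{lem:bound-exp}. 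The crucial gain is that this ``truncation'' has effective length $X=Y^\eta$, a \emph{power} of $Y$, far beyond what any residue-class partition can accommodate. The $c$-average of $\chi_c(\fa\fb^2)=\chi_{ab^2}(c)$ is then evaluated directly: cubes $\fa\fb^2$ contribute the main term through Luo's weighted count (Lemma~\ref{lem:luo-lemma}), while non-cubes are error terms controlled by the Heath-Brown--Patterson P\'olya--Vinogradov bound (Lemma~\ref{H-P}) and Heath-Brown's large sieve for cubic symbols (Lemma~\ref{HB}). Your heuristic local densities are exactly what the paper records after Theorem~\ref{thm:2}, and the product identification and the $1+O(\norm(\fp)^{-2\sigma})$ convergence of the Euler factors are fine; but the passage from heuristic to proof requires the Dirichlet-series-plus-character-sum route, not residue-class equidistribution.
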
 
The proof of this proposition is long and involves several intricate computations. It is inspired by the method devised by Luo in \cite{L} and used by Xia in \cite{X}. The proof of Proposition \ref{mainprop1} for $1/2<\sigma \leq 1$ is delicate, and in this case the recent zero density theorem of Blomer,  Goldmakher, and Louvel \cite{BGL} plays a crucial role in establishing the result without assuming the GRH.

The other ingredient needed for the application of Lemma \ref{mainlemma2} in the proof of Theorem \ref{mainthrm} is the following. 
\begin{prop}\label{mainprop2}
Let $\delta>0$ be given, and fix $\sigma>\frac12$. For sufficiently large values of $y$, we have $$\left|\widetilde{M}_{\sigma}(y)\right|\leq\exp\left(-C|y|^{\frac1{\sigma}-\delta}\right),$$ where $C$ is a positive constant that depends only on $\sigma$ and $\delta$.
\end{prop}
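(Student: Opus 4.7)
The plan is to exploit the explicit product formula defining $\widetilde{M}_{\sigma}(y)=\varphi_{F_{\sigma}}(y)$ and show that enough of the local factors are bounded away from $1$ in modulus when $|y|$ is large. Since the exponential prefactor has modulus $1$,
\[
|\widetilde{M}_{\sigma}(y)|^{2}=\prod_{\fp\nmid\langle 3\rangle}|F_{\fp}(y)|^{2},
\]
where $F_{\fp}(y)$ denotes the local factor at $\fp$. The elementary inequality $\log t\le t-1$ yields
\[
\log|\widetilde{M}_{\sigma}(y)|^{2}\le -\sum_{\fp\nmid\langle 3\rangle}\bigl(1-|F_{\fp}(y)|^{2}\bigr),
\]
so the proposition reduces to a lower bound for the sum on the right.

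Each $F_{\fp}(y)$ is a convex combination $\sum_{j=0}^{3}q_{j}e^{i\phi_{j}}$ with $q_{0}=1/(\norm(\fp)+1)$, $q_{1}=q_{2}=q_{3}=\norm(\fp)/(3(\norm(\fp)+1))$, $\phi_{0}=0$, and $\phi_{j}=-2y\,a_{\fp,j-1}(\sigma)$ for $j=1,2,3$. Expanding the modulus squared gives
\[
1-|F_{\fp}(y)|^{2}=\sum_{0\le j<k\le 3}2q_{j}q_{k}\bigl(1-\cos(\phi_{j}-\phi_{k})\bigr).
\]
Any cross term involving $q_{0}$ has a coefficient of order $1/\norm(\fp)$ and is too weak to survive summation. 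A short computation using $|1-\zeta_{3}/t|^{2}=|1-\zeta_{3}^{2}/t|^{2}=1+t^{-1}+t^{-2}$, together with the analogue $\Re(\zeta_{3}/(t-\zeta_{3}))=\Re(\zeta_{3}^{2}/(t-\zeta_{3}^{2}))$ valid for real $t$ (both reflect $\zeta_{3}^{2}=\overline{\zeta_{3}}$ and the reality of $\sigma$), shows that $a_{\fp,1}(\sigma)=a_{\fp,2}(\sigma)$ in both cases, so the $(2,3)$ cross term vanishes. The decisive contribution is therefore the $(1,2)$ pair, whose coefficient satisfies $2q_{1}q_{2}=2\norm(\fp)^{2}/(9(\norm(\fp)+1)^{2})\to 2/9$, giving for $\norm(\fp)$ sufficiently large
\[
1-|F_{\fp}(y)|^{2}\ge \tfrac{1}{5}\bigl(1-\cos(2y(a_{\fp,0}-a_{\fp,1}))\bigr).
\]

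A Taylor expansion of the defining formulas gives, as $\norm(\fp)\to\infty$,
\[
|a_{\fp,0}(\sigma)-a_{\fp,1}(\sigma)|\sim\begin{cases}3/(2\,\norm(\fp)^{\sigma}) & \text{(Case 1)},\\ 3\log\norm(\fp)/\norm(\fp)^{\sigma} & \text{(Case 2)}.\end{cases}
\]
For $|y|$ large, I would restrict to the set $\mathcal{R}_{y}$ of prime ideals $\fp$ with $2|y|\,|a_{\fp,0}-a_{\fp,1}|\in[\pi/4,\pi/2]$; this amounts to $\norm(\fp)^{\sigma}\asymp|y|$ in Case~1 and $\norm(\fp)^{\sigma}/\log\norm(\fp)\asymp|y|$ in Case~2. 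On $\mathcal{R}_{y}$ one has $1-\cos(\cdot)\ge 1-\sqrt{2}/2$, and the Prime Ideal Theorem for $k$ gives $\#\mathcal{R}_{y}\gg|y|^{1/\sigma}/\log|y|$ in both cases. Combining,
\[
\sum_{\fp}\bigl(1-|F_{\fp}(y)|^{2}\bigr)\gg \frac{|y|^{1/\sigma}}{\log|y|},
\]
and therefore $|\widetilde{M}_{\sigma}(y)|\le\exp\bigl(-c\,|y|^{1/\sigma}/\log|y|\bigr)$ for large $|y|$, which is stronger than the claimed bound for any fixed $\delta>0$.

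The main technical points are the identity $a_{\fp,1}=a_{\fp,2}$ (which is what allows the strong $(1,2)$ cross term to be identified cleanly without competing with a symmetric $(2,3)$ contribution) together with the precise leading-order computation of $a_{\fp,0}-a_{\fp,1}$; the remaining step is an application of the Prime Ideal Theorem in the window $\norm(\fp)^{\sigma}\asymp|y|$, which produces the correct exponent $1/\sigma$. The whole argument is intrinsically one-dimensional, and adapting it to the complex $s$ that would be required for the smoothness half of Theorem~\ref{thm:2} would demand a simultaneous treatment of three genuinely distinct phases $\phi_{1},\phi_{2},\phi_{3}$.
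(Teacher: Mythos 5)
Your proof is correct and follows essentially the same approach as the paper's: both exploit the identity $a_{\fp,1}=a_{\fp,2}$ (valid because $\zeta_{3}^{2}=\overline{\zeta_{3}}$ and $\sigma$ is real) to collapse each local factor to a single relative phase, restrict to the window $\norm(\fp)^{\sigma}\asymp|y|$ where that phase is bounded away from multiples of $2\pi$, and apply the prime ideal theorem for $k$ there to obtain $\gg_{\sigma}|y|^{1/\sigma-\delta}$ contributing primes. Passing from the product to a sum via the variance identity $1-\bigl|\sum q_{j}e^{i\phi_{j}}\bigr|^{2}=2\sum_{j<k}q_{j}q_{k}\bigl(1-\cos(\phi_{j}-\phi_{k})\bigr)$ together with $\log t\le t-1$ is a cosmetic repackaging of the paper's direct bound $|\widetilde{M}_{\sigma,\fp}(y)|\le 0.8256$ on the selected $\fp$; the only slips are two mutually cancelling factor-of-two discrepancies (the phases should read $\phi_{j}=-y\,a_{\fp,j-1}$ since the characteristic function of $\delta_{-a}$ is $e^{-iya}$, and in Case 1 one has $a_{\fp,0}-a_{\fp,1}\sim -3/\norm(\fp)^{\sigma}$), which leave the window and the exponent $1/\sigma$ unchanged.
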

Our proof of the above proposition is inspired by the method employed in \cite{M-M} for the case of quadratic $L$-functions which itself is based on the ideas in \cite{Wintner}.
\medskip\par

The structure of the paper is as follows. In Sections 2 and 3 we review some background materials on  distribution functions and cubic characters. We prove Proposition \ref{mainprop1} in Section 4. The proof of Proposition \ref{mainprop2} is given in Section 5. 
\bigskip\par
\noindent{\bf Notation.} Throughout the paper $\epsilon$ and $\epsilon^\prime$ denote arbitrary positive numbers, whereas $\epsilon_0$ denotes a fixed positive constant. The capital letters $F$ and $G$ are used for distribution functions and the letter $M$ represents a density function. 
The characteristic function of a distribution function $F$ is denoted by $\varphi_F$.  The Dedekind zeta function of a number field $K$ is denoted by $\zeta_K(s)$. We set $k=\mathbb{Q}(\sqrt{-3})$ and denote its ring of integers by $\mathfrak{O}_k=\mathbb{Z}[\zeta_3]$, where $\zeta_3=\exp(\frac{2\pi i}{3})$. Gothic letters represent ideals of $\mathfrak{O}_k$. The norm of an ideal $\fa$ is written as $\norm(\fa)$. We denote the multiplicity of a prime ideal $\mathfrak{p}$ in an ideal $\mathfrak{a}$ by  $\nu_\fp(\fa)$. 
The Hecke $L$-function associated with a character $\chi$ of $\mathfrak{f}$-ray class group of $k$  is denoted by $L(s, \chi)$. The arithmetic function $\mu(n)$ is the M\"{o}bius function. Finally, summations over square free elements (modulo units) of $\mathfrak{O}_k$ are denoted by ${\sum^{\flat}}$.\bigskip\par
\noindent{\bf Acknowledgements.} The authors would like to thank Youness Lamzouri for pointing out the possibility of removing GRH in our main result by employing a zero density estimate and for outlining a method for achieving this. The first author is grateful to Kumar Murty for helpful comments and discussion related to this work.
The first author would also like to thank Kirsty Chalker for producing the two figures in the paper. 

\section{DISTRIBUTIONS AND PROOF OF THEOREM \ref{mainthrm}}
We review some basic facts on asymptotic (limiting) distribution of real arithmetic functions. Our reference for this section is  Chapter III.2 of \cite{T}. We start with some notions from probability.

A function $F: \mathbb{R} \rightarrow [0, 1]$ is called a \emph{distribution function} if  $F$ is non-decreasing, right-continuous, and satisfies $F(-\infty)=0$ and $F(+\infty)=1$.  Since $F$ is non-decreasing its discontinuity set is countable. We denote the continuity set of a distribution function $F$ by $\mathcal{C}(F)$.  An example of a continuous distribution function is 
$$F(z)=\int_{-\infty}^{z} M(t) dt,$$
where $M(t)$ is a non-negative integrable function such that $\int_{-\infty}^{\infty} M(t) dt=1$. We call $M$ the \emph{density} function of $F$. 
The \emph{characteristic function} of the distribution function $F$ is the Fourier transform of the measure $dF(z)$. Thus,
$$\varphi_F(y):= \int_{-\infty}^{\infty} e^{iy z} dF(z).$$
Distribution functions are uniquely determined by their characteristic functions. In other words $F=G$ if and only if $\varphi_F=\varphi_G$ (see \cite[p. 430]{T}).
We say that a sequence $\{F_n\}_{n=1}^{\infty}$ of distribution functions \emph{converges weakly} to a function $F$ if we have $$\lim_{n\rightarrow \infty} F_n(z)=F(z),~~{\rm for~all~}z\in \mathcal{C}(F).$$  

 The celebrated theorem of L\'{e}vy establishes a relation between the weak convergence of distribution functions and the point-wise convergence of their corresponding characteristic functions. 
 
 \begin{thm}
 \label{Levy}
(i) (L\'{e}vy's continuity theorem)  Let $\{F_n\}_{n=1}^{\infty}$ be a sequence of distribution functions, and let $\{\varphi_{F_n}\}_{n=1}^{\infty}$ be the sequence of their characteristic functions. Then $F_n$ converges weakly to a distribution function $F$ if and only if $\varphi_{F_n}$ converges point-wise on $\mathbb{R}$ to a function $\varphi$ which is continuous at $0$. Furthermore, in this case, $\varphi$ is the characteristic function of $F$, and the convergence of $\varphi_{F_n}$ to $\varphi$ is uniform on any compact subset. 

(ii) In part (i) if $\varphi\in L^{1}$, there exists a continuous function $M$ such that $F(z)=\int_{-\infty}^{z} M(t)\;dt$.

(iii) In part (ii) if $y^{a}\varphi(y)\in L^{1}$ for every $0\leq a\leq k$, then $M\in C^{k}$. 
\end{thm}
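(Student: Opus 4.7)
The plan is to prove the three parts in sequence. Part (i) --- L\'{e}vy's continuity theorem --- is the substantive claim; I would attack it by combining Helly's selection theorem with a tightness argument. Parts (ii) and (iii) will then be consequences of the Fourier inversion formula and differentiation under the integral sign, respectively.

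For the forward direction of (i), I would observe that $z \mapsto e^{iyz}$ is bounded and continuous, so the Helly--Bray theorem applied to the weak convergence $F_n \to F$ immediately gives $\varphi_{F_n}(y) \to \varphi_F(y)$ pointwise; uniformity on compact sets in $y$ follows from the equicontinuity estimate $|\varphi_{F_n}(y) - \varphi_{F_n}(y')| \leq |y-y'|\,A + 2\int_{|z| > A} dF_n$ combined with the fact that $\{F_n\}$ is tight once weakly convergent. For the reverse direction, I would use Helly's selection theorem to extract from any subsequence of $\{F_n\}$ a further subsequence converging vaguely to some non-decreasing right-continuous function $\widetilde{F}:\mathbb{R}\to[0,1]$. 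The main obstacle, and the heart of the theorem, is to show that $\widetilde{F}$ is an honest distribution function, i.e., that no mass escapes to $\pm\infty$. For this I would invoke the classical estimate
\[
\int_{|z| \geq 2/T} dF_n(z) \;\leq\; \frac{1}{T}\int_{-T}^{T}\bigl(1 - \Re\,\varphi_{F_n}(y)\bigr)\,dy,
\]
together with $\varphi_{F_n}\to\varphi$, $\varphi(0)=1$, and the continuity of $\varphi$ at $0$, to conclude tightness of $\{F_n\}$. Uniqueness of the characteristic function (itself a consequence of Fourier inversion via convolution with a Gaussian approximation) then forces all subsequential limits to coincide with a single distribution function $F$ whose characteristic function is $\varphi$, establishing both the weak convergence $F_n \to F$ and the identity $\varphi_F = \varphi$.

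For part (ii), under the hypothesis $\varphi\in L^1(\mathbb{R})$, the plan is to define
\[
M(z) = \frac{1}{2\pi}\int_{-\infty}^{\infty} e^{-izy}\varphi(y)\,dy,
\]
which is continuous by dominated convergence. To identify $M$ as the density of $F$, I would apply L\'{e}vy's inversion formula: for any continuity points $a < b$ of $F$,
\[
F(b) - F(a) \;=\; \lim_{T\to\infty}\frac{1}{2\pi}\int_{-T}^{T} \frac{e^{-iay}-e^{-iby}}{iy}\,\varphi(y)\,dy.
\]
Since $\varphi \in L^1$, the integrand is absolutely integrable, so the limit can be taken inside and Fubini's theorem interchanges the two integrations to give $F(b) - F(a) = \int_a^b M(t)\,dt$; letting $a\to-\infty$ through continuity points yields $F(z) = \int_{-\infty}^z M(t)\,dt$. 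For part (iii), because $y^j\varphi(y) \in L^1$ for $0 \leq j \leq k$, dominated convergence justifies differentiating $M$ under the integral sign $k$ times, producing continuous derivatives
\[
M^{(j)}(z) \;=\; \frac{(-i)^j}{2\pi}\int_{-\infty}^{\infty} y^j\, e^{-izy}\,\varphi(y)\,dy, \qquad 0 \leq j \leq k,
\]
and each such derivative is continuous (again by dominated convergence), so $M \in C^k$.
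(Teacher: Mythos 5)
Your proof is correct. The paper does not prove this theorem itself; for part (i) it cites Tenenbaum's book, for (ii) Billingsley, and for (iii) Folland. What you have supplied is the standard textbook argument underlying those citations: Helly--Bray for the forward direction of (i), Helly selection combined with the classical tightness estimate $\int_{|z|\ge 2/T}dF_n\le \tfrac1T\int_{-T}^{T}(1-\Re\varphi_{F_n}(y))\,dy$ and uniqueness of characteristic functions for the converse, L\'{e}vy's inversion formula plus Fubini for (ii), and differentiation under the integral sign for (iii). So you are not taking a different route so much as filling in the deferred references; every step checks out.
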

 
\begin{proof}
(i) See \cite[p. 430, Theorem 2.4]{T}.

(ii) Since $\varphi\in L^{1}$, there exists a continuous function $M$ such that $dF(z)=M(z)\;dz$ (\cite[p.~347]{Bill}). 

(iii) This follows from \cite[Theorem 8.22 (d)]{Folland}. 
 \end{proof}
We say that $f$ possesses an \emph{asymptotic distribution function} $F$ if the sequence $\{F_{N}\}_{N=1}^{\infty}$ with
$$F_N(z)=\frac{\#\{n\leq N;~f(n) \leq z\}}{N} $$
converges weakly to a distribution function $F$, as $N\rightarrow \infty$.

We are now ready to prove the main lemma that describes the method used in the proofs of our theorems.

\begin{proof}[Proof of Lemma \ref{mainlemma2}]
We first observe that the function $$G_N(z)= \frac{\displaystyle{\sum_{n=1}^{\infty}} \psi_{f, z}(n)e^{-n/N}}{\displaystyle{\sum_{n=1}^{\infty}}e^{-n/N}},$$
where
$$\psi_{f, z}(n)=\begin{cases}
 1& \text{if}~ f(n)\leq z,\\
 0&\text{otherwise,}
 \end{cases}$$
 is a distribution function. Note that the characteristic function of $G_N$ is
 $$\varphi_{G_N}(y)=\int_{-\infty}^{\infty} e^{iy z} d{G_N}(z)=
 \frac{\displaystyle{\sum_{n=1}^{\infty}} e^{iy f(n)}e^{-n/N}}{\displaystyle{\sum_{n=1}^{\infty}}e^{-n/N}}.
 $$
Under the conditions of the lemma, by part (i) of Theorem \ref{Levy}, we have that $\{G_N\}$ converges weakly to a distribution function $F$. Thus,
 $$\lim_{N\rightarrow \infty} \frac{\displaystyle{\sum_{n=1}^{\infty}} \psi_{f, z}(n)e^{-n/N}}{\displaystyle{\sum_{n=1}^{\infty}}e^{-n/N}}=F(z)$$
 for all $z\in C(F)$. 
By an application of a Tauberian theorem of Hardy and Littlewood (see \cite[Theorem 98]{H}) we deduce that
 $$\lim_{N\rightarrow \infty}\frac{\#\{n\leq N;~f(n) \leq z\}}{N}= F(z),$$
 for all $z\in C(F)$. This establishes that $f$ possesses the distribution function $F$. 
 
Next if the upper bound \eqref{decay} holds for $\widetilde{M}(y)$, then by parts (ii) and (iii) of Theorem \ref{Levy} we have $F(z)=\int_{-\infty}^{z} M(t) dt$ for a smooth probability density function $M$. Since $M, \widetilde{M}\in L^1$ and $\widetilde{M}$ is the characteristic function of $F$, then    
 the Fourier inversion theorem \cite[Theorem 8.26]{Folland} implies \eqref{inversion}.
 \end{proof}

The \emph{convolution} of two distribution functions $F$ and $G$ is the distribution function $F*G$ defined by
$$(F*G)(z)=\int_{-\infty}^{\infty} F(z-y) dG(y)=\int_{-\infty}^{\infty} G(z-y) dF(y).$$
It can be shown that $\varphi_{F*G}=\varphi_F \varphi_G.$  We say that a distribution function $F$ is the infinite convolution of distribution functions $F_1, F_2, \ldots, F_n, \ldots$ if $F_1*F_2*\ldots *F_n$  converges weakly to $F$ as $n\rightarrow \infty$. In such case we write $F=*_i F_i$. The following theorem provides a necessary and sufficient condition for the existence of infinite convolutions.
\begin{thm}
\label{infinite-convolution}
The infinite convolution $*_i F_i$ exists if and only if there exists $\delta>0$ such that for $|y|\leq \delta$ we have
$$\lim_{m,n\rightarrow \infty} \prod_{m<j\leq n} \varphi_{F_j}(y)=1.$$
\end{thm}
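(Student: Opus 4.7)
The plan is to prove both directions of the equivalence by applying L\'evy's continuity theorem (Theorem~\ref{Levy}(i)) to the partial convolutions $G_n := F_1 * F_2 * \cdots * F_n$, whose characteristic functions satisfy $\varphi_{G_n}(y) = \prod_{j\le n}\varphi_{F_j}(y)$. The ``only if'' direction is immediate: if $G_n$ converges weakly to $F := *_i F_i$, then Theorem~\ref{Levy}(i) yields $\varphi_{G_n}\to\varphi_F$ pointwise on $\mathbb{R}$, with $\varphi_F$ continuous at $0$ and $\varphi_F(0)=1$. Choosing $\delta>0$ so that $\varphi_F(y)\neq 0$ on $[-\delta,\delta]$, the identity
\[
\prod_{m<j\le n}\varphi_{F_j}(y) = \frac{\varphi_{G_n}(y)}{\varphi_{G_m}(y)}
\]
shows that this tail product tends to $\varphi_F(y)/\varphi_F(y)=1$ on $[-\delta,\delta]$ as $m,n\to\infty$.

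For the ``if'' direction, the hypothesis together with the identity $\varphi_{G_n}(y) - \varphi_{G_m}(y) = \varphi_{G_m}(y)\bigl(\prod_{m<j\le n}\varphi_{F_j}(y)-1\bigr)$ and the bound $|\varphi_{G_m}(y)|\le 1$ shows that $\{\varphi_{G_n}(y)\}$ is a Cauchy sequence for each $y\in[-\delta,\delta]$, and so converges to a limit $\varphi(y)$ on that interval. Next I would analyse the tail convolutions $H_{m,n} := F_{m+1}*\cdots*F_n$, whose characteristic functions are exactly the products appearing in the hypothesis and thus converge to $1$ on $[-\delta,\delta]$. Using the standard truncation bound
\[
H_{m,n}\bigl(\{|x|>2/\eta\}\bigr) \;\le\; \frac{1}{\eta}\int_{-\eta}^{\eta}\bigl(1-\operatorname{Re}\varphi_{H_{m,n}}(y)\bigr)\,dy
\]
(valid for any $0<\eta\le\delta$) together with dominated convergence, the family $\{H_{m,n}\}$ is uniformly tight; any weak subsequential limit has characteristic function equal to $1$ on $[-\delta,\delta]$, which via $\int(1-\cos(yx))\,dF(x)=0$ and positivity of the integrand forces that limit to be $\delta_0$. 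Hence $H_{m,n}\Rightarrow\delta_0$ as $m,n\to\infty$.

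To obtain weak convergence of $\{G_n\}$ itself, I would combine the decomposition $G_n = G_m * H_{m,n}$ with the tightness of each individual $G_m$ and the uniform tightness of $\{H_{m,n}\}$ to conclude tightness of $\{G_n\}$. By Prokhorov's theorem, every subsequence has a further weakly convergent subsubsequence; by Theorem~\ref{Levy}(i), any such subsequential limit $F$ has characteristic function equal to $\varphi$ on $[-\delta,\delta]$. The remaining point is uniqueness of this limit: given two weak limits $F$ and $F'$ of subsequences of $\{G_n\}$, writing each $G_{n_k}$ as $G_m * H_{m,n_k}$ and letting $k\to\infty$ (for each fixed $m$) yields $F = G_m * L_m = F'$ for a common weak limit $L_m$ of the relevant tails, forcing $F=F'$ and hence genuine weak convergence $G_n\Rightarrow F$.

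The main obstacle is precisely this last uniqueness step. Characteristic functions are not analytic in general, so a priori agreement of two characteristic functions on a neighborhood of $0$ does not force them to coincide on all of $\mathbb{R}$. The argument must therefore exploit the infinite-convolution structure rather than a purely analytic extension: the decomposition $G_n = G_m * H_{m,n}$ with $H_{m,n}\Rightarrow\delta_0$ is what reduces the uniqueness of the limit of $\{G_n\}$ to the (much easier) uniqueness of the limit of the tails, which is $\delta_0$ by the positivity computation above.
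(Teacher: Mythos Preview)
The paper does not actually prove this theorem; its ``proof'' is a one-line citation to \cite[p.~434, Theorem~2.7]{T}. So there is nothing in the paper to compare against, and the question is simply whether your argument is correct.

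Your overall strategy is the standard one and is essentially correct. The ``only if'' direction is fine. In the ``if'' direction, your use of the truncation inequality to get uniform tightness of the tails $H_{m,n}$, the identification of every subsequential limit of the tails as $\delta_0$ via $\int(1-\cos(yx))\,dF(x)=0$, the conclusion $H_{m,n}\Rightarrow\delta_0$, and the tightness of $\{G_n\}$ via $G_n=G_m*H_{m,n}$ are all sound.

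The one genuine gap is the uniqueness paragraph. You assert that for each fixed $m$ the tails $H_{m,n_k}$ along the two subsequences have a \emph{common} weak limit $L_m$, but nothing you have proved justifies this: you only established $H_{m,n}\Rightarrow\delta_0$ as both $m,n\to\infty$, which says nothing about the behaviour of $H_{m,n}$ for fixed $m$ (and indeed for fixed $m$ the limit, if it exists, is typically not $\delta_0$). Two subsequential limits of $\{H_{m,n}\}_n$ whose characteristic functions happen to agree on $[-\delta,\delta]$ need not coincide. The clean fix is already implicit in what you did: from $H_{m,n}\Rightarrow\delta_0$ (as $m,n\to\infty$) and the boundedness of $e^{iyx}$ you get $\varphi_{H_{m,n}}(y)\to 1$ for \emph{every} $y\in\mathbb{R}$, not just $|y|\le\delta$. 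Feeding this back into your Cauchy identity
\[
\varphi_{G_n}(y)-\varphi_{G_m}(y)=\varphi_{G_m}(y)\bigl(\varphi_{H_{m,n}}(y)-1\bigr)
\]
shows that $\varphi_{G_n}(y)$ converges for all $y$. Hence every subsequential weak limit of the tight family $\{G_n\}$ has the same characteristic function, and uniqueness follows from the fact that a distribution is determined by its characteristic function. With this adjustment the proof is complete.
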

\begin{proof}
See \cite[p. 434, Theorem 2.7]{T}.
\end{proof}

\begin{proof}[Proof of Theorem \ref{mainthrm}]
Applying Proposition \ref{mainprop1} gives \begin{equation*}
\lim_{Y\to\infty}\frac{1}{\mathcal{N}^{*}(Y)} \sum_{c\in \mathcal{C}}^{\star}\exp\left(iy\mathcal{L}_{c} (\sigma)\right)\exp(-\norm(c)/Y)=\widetilde{M}_{\sigma}(y).
\end{equation*}
In one of the steps in the proof of Proposition \ref{mainprop1}, it is shown (in Proposition \ref{newprop}) that  $\widetilde{M}_{\sigma}(y)$ has also a representation as an absolutely convergent Dirichlet series which is uniformly convergent for $y\in [-R, R]$. Therefore, $\widetilde{M}_{\sigma}(y)$ is continuous at $y=0$. Next 
by Proposition \ref{mainprop2}, we know that $\widetilde{M}_{\sigma}(y)$ satisfies \eqref{decay}.
Hence,  Lemma \ref{mainlemma2} establishes the existence of a smooth asymptotic distribution function $F_\sigma(z)=\int_{-\infty}^{z} M_\sigma(t) dt$, where $M_\sigma(t)=(1/2\pi)\int_{\mathbb{R}}\exp\left(-ity\right)\widetilde{M}_{\sigma}(y)dy$. Moreover, $\widetilde{M}_{\sigma}(y)=\varphi_{F_\sigma}(y)$. Since $\widetilde{M}_{\sigma}(0)=1$, there is $\delta>0$ such that $\widetilde{M}_{\sigma}(y)\neq 0$ for $|y|\leq \delta$. This shows that for $|y|\leq \delta$ we have
$$\lim_{m,n\rightarrow \infty} \prod_{m<j\leq n} \phi_{F_{\sigma, \fp_j}}(y)=1,$$
where $(\fp_j)$ is a sequence representing the prime ideals of $k$ and  $F_{\sigma, \fp_j}$ is defined in the statement of Theorem \ref{mainthrm}. Thus by Theorem \ref{infinite-convolution}, the infinite convolution of the distributions $F_{\sigma, \fp}$ exists and hence $F_{\sigma}(z)=\mbox{*}_{\fp}\;F_{\sigma,\fp}(z)$.
\end{proof}

\section{PRELIMINARIES ON  CHARACTERS}\label{sec:cubic}

Let $k=\QQ(\sqrt{-3})$, and let $\mathfrak{O}_{k}=\mathbb{Z}[\zeta_{3}]$ be its ring of integers, where $\zeta_{3}=\exp\left(\frac{2\pi i}{3}\right)$. We know that $\mathfrak{O}_{k}$ is a principal ideal domain and its  group of units is given by $U_k=\{\pm1,\pm\zeta_{3},\pm\zeta_{3}^2\}$. We also have $\langle3\rangle:=3\mathfrak{O}_{k}=\langle1-\zeta_{3}\rangle^2$, and $3$
is the only ramified prime in $\mathfrak{O}_{k}$. Moreover, any ideal $\mathfrak{a}$ in $\mathfrak{O}_k$ has a unique generator of the form $(1-\zeta_{3})^{r}a$ where $r\in\ZZ^{\geq0}$ and $a\equiv 1 \imod{\langle3\rangle}$. In particular, if $\mathfrak{a}$ is relatively prime to $\langle3\rangle$, then $\mathfrak{a}=\langle a\rangle$ with $a\equiv 1 \imod{\langle3\rangle}$.

\subsection{Cubic residue symbol}
Let us now recall the definition of the cubic residue symbol (character) and collect some related facts. The reader is referred to \cite[Chapter~9]{I-R} and \cite[Chapter~7]{lemmermeyer} for more details. Let $\pi\in\mathfrak{O}_{k}$ be a prime not dividing 3, and suppose that $\alpha\in\mathfrak{O}_{k}$ is not divisible by $\pi$. It follows that  $$\alpha^{(\norm(\pi)-1)/3}\equiv1,\zeta,\zeta_{3}^{2}\imod{\langle \pi \rangle}.$$ The cubic residue symbol of $\alpha$ modulo $\pi$, denoted $\left(\frac{\alpha}{\pi}\right)_{3}$, is the unique cube root of unity such that \[\alpha^{(\norm(\pi)-1)/3}\equiv\left(\frac{\alpha}{\pi}\right)_{3}  \imod{\langle \pi \rangle} \] We set $\left(\dfrac{\alpha}{\pi}\right)_{3}=0$ if $\pi\mid\alpha$. In fact, for $(\alpha, \pi)=1$ the congruence $x^{3}\equiv\alpha\imod{\langle\pi\rangle}$ is solvable in $\mathfrak{O}_{k}$ if and only if $\left(\dfrac{\alpha}{\pi}\right)_{3}=1$. Moreover, we have \begin{equation}\label{eqn:cub-res-props}\left(\frac{-1}{\pi}\right)_{3}=1,\quad\left(\frac{\alpha\beta}{\pi}\right)_{3}=\left(\frac{\alpha}{\pi}\right)_{3}\left(\frac{\beta}{\pi}\right)_{3},\quad\text{and}\quad \alpha\equiv\beta\imod{\langle \pi \rangle}\implies\left(\frac{\alpha}{\pi}\right)_{3}=\left(\frac{\beta}{\pi}\right)_{3}.\end{equation}

We next extend the definition of the cubic residue symbol to include symbols of the form $\left(\frac{\alpha}{\lambda}\right)_{3}$, where $\lambda$ is a non-prime element in $\mathfrak{O}_{k}$. Let $\alpha,\lambda\in\mathfrak{O}_{k}$ with $\lambda\not\equiv 0\imod{\langle1-\zeta_{3}\rangle}$. Since $\mathfrak{O}_{k}$ is a unique factorization domain, we set \[\left(\frac{\alpha}{\lambda}\right)_{3}=\begin{cases}1 & \text{if}\; \lambda\in U_{k},\\
 \left(\frac{\alpha}{\pi_{1}}\right)_{3}\cdots\left(\frac{\alpha}{\pi_{r}}\right)_{3} & \text{if}\; \lambda\notin U_{k}\;\text{and}\; \lambda=\pi_{1}\cdots\pi_{r},\;\text{with all}\; \pi_{i} \;\text{prime in}\;\mathfrak{O}_{k}.\end{cases}\] This definition retains properties analogous to the basic properties in (\ref{eqn:cub-res-props}).

An element $\alpha \in\mathfrak{O}_{k}$ is said to be primary if $\alpha\equiv \pm1 \imod{\langle3\rangle}$. The following proposition is the law of cubic reciprocity and its supplementary formula.
 \begin{prop}\label{prop:cub-rec}
 If $\alpha,\lambda\in\mathfrak{O}_{k}$ are primary, then  \[\left(\frac{\alpha}{\lambda}\right)_{3}=\left(\frac{\lambda}{\alpha}\right)_{3}.\]
  In addition, if  $\lambda=1+3a +3b\zeta_{3}$ for some $a,b\in\mathbb{Z}$, we get the supplementary formula 
 \[\left(\frac{\zeta_{3}}{\lambda}\right)_{3}=\zeta_{3}^{-(a+b)}\quad\quad\text{and}\quad\quad\left(\frac{1-\zeta_{3}}{\lambda}\right)_{3}=\zeta_{3}^{ a}.\]
 \end{prop}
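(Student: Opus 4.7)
The statement collects Eisenstein's cubic reciprocity law together with its two supplementary formulas, both classical and proved in detail in \cite[Ch.~9]{I-R} and \cite[Ch.~7]{lemmermeyer}. My plan is to follow that standard treatment rather than invent a new argument, organizing the sketch in three stages.

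First, by multiplicativity of the cubic residue symbol in both arguments (immediate from \eqref{eqn:cub-res-props} and the definition for non-prime $\lambda$), I would reduce the reciprocity identity to the case where $\alpha=\pi_1$ and $\lambda=\pi_2$ are distinct primary primes of $\mathfrak{O}_k$. Compatibility of both sides of the reciprocity relation with unit multiplication in either argument forces the primary normalization, and this is exactly where the supplementary laws for $\zeta_3$ and $1-\zeta_3$ have to re-enter at the end of the argument to kill ambiguous cube roots of unity.

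Second, I would deploy the cubic Gauss/Jacobi sum machinery over $\mathfrak{O}_k/\pi_1$. Let $\chi_{\pi_1}(\cdot)=\left(\frac{\cdot}{\pi_1}\right)_3$ and let $\psi$ be a nontrivial additive character on $\mathfrak{O}_k/\pi_1$. One forms the Gauss sum $g(\chi_{\pi_1})=\sum_{t}\chi_{\pi_1}(t)\psi(t)$ and proves the identity $g(\chi_{\pi_1})^3=\pi_1\cdot J(\chi_{\pi_1},\chi_{\pi_1})$, where the Jacobi sum $J(\chi_{\pi_1},\chi_{\pi_1})=\sum_t\chi_{\pi_1}(t)\chi_{\pi_1}(1-t)$ is primary of norm $\norm(\pi_1)$. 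Raising $g(\chi_{\pi_1})$ to the $\norm(\pi_2)$-th power and evaluating modulo $\pi_2$ in two different ways --- once via Frobenius on the additive character together with a multinomial expansion, once via the cube relation above --- yields the symmetric identity $\left(\frac{\pi_1}{\pi_2}\right)_3=\left(\frac{\pi_2}{\pi_1}\right)_3$ after matching cube roots of unity through the primary hypothesis.

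Third, for the supplementary formulas I would compute directly from the defining congruence $\alpha^{(\norm(\lambda)-1)/3}\equiv\left(\frac{\alpha}{\lambda}\right)_3 \imod{\lambda}$. From $\norm(\lambda)=\lambda\overline{\lambda}$ with $\lambda=1+3a+3b\zeta_3$ one obtains $(\norm(\lambda)-1)/3=3(a^2-ab+b^2)+2a-b$, so reducing the exponent modulo $3$ gives $\zeta_3^{(\norm(\lambda)-1)/3}=\zeta_3^{2a-b}=\zeta_3^{-(a+b)}$, which identifies $\left(\frac{\zeta_3}{\lambda}\right)_3$ as advertised. The second supplementary formula is handled analogously, using the identity $(1-\zeta_3)^2=-3\zeta_3$ (compatible with $\langle 3\rangle=\langle 1-\zeta_3\rangle^2$) to replace powers of $(1-\zeta_3)$ by rational integer multiples of powers of $\zeta_3$ modulo $\lambda$. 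The main obstacle in a self-contained write-up would be the congruence bookkeeping in the reciprocity step, where the primary hypothesis must be threaded through the Jacobi-sum calculation precisely to eliminate a stray sixth-root ambiguity. Because of this technical density, I expect the cleanest presentation in the final paper is to simply cite \cite{I-R} and \cite{lemmermeyer} and import the statement as-is.
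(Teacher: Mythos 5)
Your proposal matches the paper exactly in its conclusion: the paper's entire proof is a one-line citation to Lemmermeyer, Theorem~7.8, and you end by predicting precisely this. Your sketch of what the cited result actually involves is accurate; in particular, the congruence computation $(\norm(\lambda)-1)/3 = 2a-b+3(a^2-ab+b^2)$ with $2a-b\equiv -(a+b)\imod 3$ correctly recovers the first supplementary formula, and the Gauss/Jacobi-sum outline is the standard route to the main reciprocity law, though the derivation of the $(1-\zeta_3)$-supplement is more delicate than your one-line remark suggests (the relation $(1-\zeta_3)^2=-3\zeta_3$ alone leads only to a tautology, and the actual argument in the references requires a separate analysis near the ramified prime).
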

 \begin{proof}
 This is \cite[Theorem~7.8]{lemmermeyer}.
 \end{proof}

The following large sieve type inequality for cubic residue symbols, established in \cite{HB}, plays an important role in the proof of Proposition \ref{mainprop1}.

\begin{lem}[\bf Heath-Brown]\label{HB}
For $\epsilon>0$ and $b_\alpha\in \mathfrak{O}_k$, we have
\begin{equation}\label{eqn:large-sieve}\sum_{\substack{\lambda\equiv1\imod{\langle3\rangle}\\\norm(\lambda)\leq M}}^{\flat}\left|\sum_{\substack{\alpha\equiv1\imod{\langle3\rangle}\\\norm(\alpha)\leq N}}^{\flat}b_{\alpha}~ \left(\frac{\alpha}{\lambda}\right)_3  \right|^2\ll_{\epsilon}(M+N+(MN)^{\frac{2}{3}})(MN)^{\epsilon}\sum_{\norm(\alpha)\leq N}^{\flat}|b_{\alpha}|^{2},\end{equation}
where $\displaystyle{\sum^{\flat}}$ denotes summation over square free elements (modulo units) of $\mathfrak{O}_k$. 
\end{lem}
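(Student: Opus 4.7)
The plan is to follow Heath-Brown's classical approach to cubic large sieve inequalities. First I would apply the duality principle, which recasts \eqref{eqn:large-sieve} as the equivalent dual bound
\[ \sum_{\substack{\alpha\equiv1\imod{\langle3\rangle}\\\norm(\alpha)\leq N}}^{\flat}\left|\sum_{\substack{\lambda\equiv1\imod{\langle3\rangle}\\\norm(\lambda)\leq M}}^{\flat} a_{\lambda}\left(\frac{\alpha}{\lambda}\right)_3  \right|^2\ll_{\epsilon}(M+N+(MN)^{2/3})(MN)^{\epsilon}\sum_{\norm(\lambda)\leq M}^{\flat}|a_{\lambda}|^{2}. \]
Expanding the square produces a diagonal contribution bounded by $M \sum|a_\lambda|^2$ plus off-diagonal pieces of the form $\sum_{\alpha}^{\flat} \left(\alpha/\lambda_1\overline{\lambda_2}\right)_3$ taken over square-free $\alpha$ with $\norm(\alpha)\leq N$, so the task reduces to controlling these off-diagonal character sums uniformly in the moduli $\lambda_1\overline{\lambda_2}$.

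Next I would invoke cubic reciprocity (Proposition \ref{prop:cub-rec}) to transform $(\alpha/\lambda)_3$ into $(\lambda/\alpha)_3$, up to the supplementary factors coming from $\zeta_3$ and $1-\zeta_3$, which depend only on residues modulo $\langle 9\rangle$ and can be absorbed by a decomposition of the sum into finitely many residue classes. The inner sum thereby becomes a character sum in $\alpha$ attached to the modulus $\lambda_1\overline{\lambda_2}$; when this modulus is not a perfect cube, a smooth dyadic truncation followed by Poisson summation in cubic arithmetic progressions converts the character sum into cubic Gauss sums $g_3(r,\lambda_1\overline{\lambda_2})$ weighted by the Fourier transform of the cutoff. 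The cube-modulus case can be bounded directly and contributes the linear $N$ term in \eqref{eqn:large-sieve}.

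The heart of the argument, and the only genuinely deep input, is the bound on sums of cubic Gauss sums
\[ \sum_{\substack{\lambda\equiv 1 \imod{\langle3\rangle}\\ \norm(\lambda)\leq X}}^{\flat} g_3(r,\lambda) \ll_{\epsilon} (X\norm(r))^{\epsilon} X^{5/6}, \]
which Heath-Brown established through Patterson's identification of cubic Gauss sums with Fourier coefficients of the cubic metaplectic theta function on $GL_2$ over $k$. Substituting this estimate into the Poisson-transformed off-diagonal and balancing the truncation parameters delivers precisely the hybrid $M+N+(MN)^{2/3}$ bound, with the $M$ coming from the diagonal, the $N$ from the cube-modulus case, and the $(MN)^{2/3}$ matching the $5/6$ exponent in the Gauss sum bound. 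The main obstacle in this plan is thus the Gauss sum estimate itself: duality, reciprocity, and Poisson summation are routine manipulations, but the exponent $5/6$ rests on the spectral theory of metaplectic Eisenstein series on the cubic cover of $GL_2$ and cannot be replaced by any elementary input.
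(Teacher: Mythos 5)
The paper offers no proof of this lemma: it is quoted directly from Heath-Brown's paper \cite{HB}, so what you should be matching is Heath-Brown's own argument. Your sketch does not match it, and in fact misidentifies what the essential input is.

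Heath-Brown's proof of the cubic large sieve is a self-contained recursive (bootstrapping) argument in the mold of his 1995 quadratic large sieve. After smoothing, duality, Cauchy--Schwarz, cubic reciprocity, and Poisson summation, the only Gauss-sum input he uses is the \emph{individual} bound $|g_3(c)| \le \sqrt{\norm(c)}$, which is elementary (essentially Weil/explicit evaluation). Poisson returns a bilinear expression of the same shape as the one you started with, with the roles of the two variables interchanged and the lengths changed, and the final bound is obtained by iterating this self-similarity. The extra $(MN)^{2/3}$ term is precisely the price of this recursion being less efficient than in the quadratic case; it is \emph{not} the reflection of a $5/6$ exponent in a Gauss-sum average.

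The step in your plan that "cannot be replaced by any elementary input," namely the bound $\sum_{\norm(\lambda)\le X}^{\flat} g_3(r,\lambda)\ll_\epsilon (X\norm(r))^\epsilon X^{5/6}$ coming from Patterson's metaplectic theory, is not used in the proof of the large sieve in \cite{HB} at all. Worse, you have the logical dependence reversed: in \cite{HB} the large sieve (Theorem~2 there) is an input to the proof of the prime-modulus version of that Gauss-sum estimate (Theorem~1 there). If the large sieve required Theorem~1, the paper would be circular. (Patterson's original estimate for unrestricted moduli predates \cite{HB} and is independent, but Heath-Brown does not route the large sieve through it.) Finally, you assert that "balancing the truncation parameters delivers precisely the hybrid $M+N+(MN)^{2/3}$ bound" with the $(MN)^{2/3}$ "matching the $5/6$ exponent," but you give no exponent bookkeeping; $5/6$ and $2/3$ are not in any obvious correspondence, and I would not accept this without a worked computation. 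As it stands, your sketch describes a different and unverified route, while also misrepresenting the mechanism of the cited proof.
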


\subsection{The class $\mathcal{C}$}
\label{C}
Recall the set $\mathcal{C}$ given in the introduction  by \[\mathcal{C}:=\left\{c\in\mathfrak{O}_k;~c\neq 1 \text{ is square-free and } c\equiv 1 \imod{\langle9\rangle} \right\}.\] 
The following estimate is used in Section \ref{sec:mean-value}. For a proof the reader is referred to \cite[p.~1194]{L}.
\begin{lem}\label{lem:luo-lemma}
As $Y\to\infty$, we have $$\sum_{\substack{c\in\mathcal{C}\\\gcd(\langle c \rangle),\fa)=1}}\exp\left(-\frac{\norm(c)}{Y}\right)=C_{\fa}Y+O_\epsilon(Y^{\frac12+\epsilon}\norm(\fa)^{\epsilon}),$$ where $$C_{\fa}=\frac{3}{4}\frac{\res_{s=1}\zeta_{k}(s)}{\left| H_{\langle9\rangle} \right|\zeta_{k}(2)}\prod_{\substack{\fp|\fa\\\fp\; \text{prime}}}\left(1+\norm(\fp)^{-1}\right)^{-1},$$ $\mathrm{res}_{s=1}(\zeta_{k}(s))$ denotes the residue of $\zeta_{k}(s)$ at $s=1$, and  $H_{\langle 9\rangle}$ denotes the $\langle 9 \rangle$-ray class group of $k$.
\end{lem}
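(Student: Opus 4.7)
The plan is to recover the sum via a standard Mellin-transform / contour-shift argument, with the ray class group $H_{\langle 9\rangle}$ intervening to detect the congruence condition. First I would write
\[
e^{-\norm(c)/Y}\;=\;\frac{1}{2\pi i}\int_{(2)}\Gamma(s)\,Y^{s}\,\norm(c)^{-s}\,ds,
\]
interchange summation and integration (the sum over $c$ converges absolutely on $\Re(s)=2$), and reduce matters to analyzing
\[
S(Y)=\frac{1}{2\pi i}\int_{(2)}\Gamma(s)\,Y^{s}\,D(s)\,ds,\qquad
D(s)=\sum_{\substack{c\in\mathcal{C}\\(\langle c\rangle,\fa)=1}}\norm(c)^{-s}.
\]

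Next I would rewrite $D(s)$ as a sum over ideals. Since the only unit of $\mathfrak{O}_k$ lying in $1+\langle 9\rangle$ is $1$ itself, the map $c\mapsto\langle c\rangle$ identifies the elements $c\equiv 1\imod{\langle 9\rangle}$ coprime to $\langle 3\rangle$ with the principal ideals in the trivial class of $H_{\langle 9\rangle}$. Orthogonality of characters of $H_{\langle 9\rangle}$ then yields
\[
D(s)\;=\;-1+\frac{1}{|H_{\langle 9\rangle}|}\sum_{\chi\in\widehat{H_{\langle 9\rangle}}}L^{\flat}_{\chi,\fa}(s),\qquad
L^{\flat}_{\chi,\fa}(s)=\!\!\prod_{\fp\nmid\fa\langle 3\rangle}\!\!\bigl(1+\chi(\fp)\norm(\fp)^{-s}\bigr).
\]
Writing $1+x=(1-x^2)/(1-x)$ turns each $L^{\flat}_{\chi,\fa}(s)$ into
\[
\frac{L(s,\chi)}{L(2s,\chi^{2})}\prod_{\fp\mid\fa\langle 3\rangle}\frac{1-\chi(\fp)\norm(\fp)^{-s}}{1-\chi^{2}(\fp)\norm(\fp)^{-2s}}.
\]
So $D(s)$ continues meromorphically, the only pole in $\Re(s)>\tfrac12$ coming from $\chi=\chi_{0}$ at $s=1$, where $L(s,\chi_{0})=\zeta_{k}(s)$.

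Now I would shift the contour from $\Re(s)=2$ to $\Re(s)=\tfrac12+\epsilon$. The residue at $s=1$ gives the main term; since $\langle 3\rangle=\langle 1-\zeta_{3}\rangle^{2}$ and $\prod_{\fp\mid\langle 3\rangle}(1+\norm(\fp)^{-1})^{-1}=\tfrac34$, the residue evaluates to
\[
\Gamma(1)\,Y\cdot\frac{\res_{s=1}\zeta_{k}(s)}{|H_{\langle 9\rangle}|\,\zeta_{k}(2)}\cdot\frac34\prod_{\fp\mid\fa}(1+\norm(\fp)^{-1})^{-1}=C_{\fa}Y,
\]
which is the desired constant (the contribution from $\fp\mid\fa$ but $\fp\mid\langle 3\rangle$ vanishes because $c$ is automatically coprime to $\langle 3\rangle$). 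The $-1$ term in $D(s)$ contributes $O(1)$ and is absorbed into the error.

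The remaining task, which is the main technical obstacle, is to control the shifted integral on $\Re(s)=\tfrac12+\epsilon$. There $\Gamma(s)$ decays exponentially in $|\Im s|$, while each $L(s,\chi)$ has, by convexity (the conductor of $\chi$ divides $\langle 9\rangle$ and is thus bounded), polynomial growth in $|\Im s|$; the ratio $1/L(2s,\chi^{2})$ is harmless since $\Re(2s)=1+2\epsilon$ lies in the region of absolute convergence of the Euler product. The finite product over $\fp\mid\fa\langle 3\rangle$ is bounded on this line by
\[
\prod_{\fp\mid\fa}\bigl(1+\norm(\fp)^{-1/2-\epsilon}\bigr)\ll_{\epsilon}\norm(\fa)^{\epsilon},
\]
using the standard estimate on the number of prime divisors of $\fa$. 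Multiplying these bounds together and integrating against $\Gamma(s)Y^{s}$ yields the shifted integral $\ll_{\epsilon}Y^{1/2+\epsilon}\norm(\fa)^{\epsilon}$, which is the error term claimed in the statement.
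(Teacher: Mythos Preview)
Your argument is correct and is the standard Mellin--contour-shift approach to such counts. Note that the paper itself does not supply a proof of this lemma but refers to \cite[p.~1194]{L}; Luo's argument there proceeds along the same lines you describe (Mellin inversion, character orthogonality over $H_{\langle 9\rangle}$, and the identity $\prod_{\fp}(1+\chi(\fp)\norm(\fp)^{-s})=L(s,\chi)/L(2s,\chi^{2})$ followed by a contour shift to $\Re(s)=\tfrac12+\epsilon$), so your proposal matches the intended proof.
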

As a corollary of this lemma we have
\begin{equation}
\label{N*}
\mathcal{N}^*(Y)= \sum_{\substack{c\in\mathcal{C}}}\exp\left(-\frac{\norm(c)}{Y}\right) \sim  \frac{3}{4}\frac{\res_{s=1}\zeta_{k}(s)}{\left| H_{\langle9\rangle} \right|\zeta_{k}(2)} Y,
\end{equation}
 and 
 \[\mathcal{N}(Y)= \#\left\{c\in\mathcal{C}:\norm(c)\leq Y\right\}\sim\frac{3\mathrm{res}_{s=1}(\zeta_{k}(s))}{4\left| H_{\langle 9\rangle}\right|\zeta_{k}(2)}Y,\]
 as $Y\rightarrow \infty$.

In this paper, we are mostly interested in the cubic residue symbols $\left(\dfrac{.}{c}\right)_{3}$ where $c\in\mathcal{C}$. By Proposition \ref{prop:cub-rec}, we have 
$$\left(\frac{u}{c}\right)_3=1,\quad\quad{\rm for\; all}\; u\in U_k\;{\rm and}\; c\in\mathcal{C}.$$ 
In other words, $\left(\dfrac{.}{c}\right)_{3}$ is trivial on the units of $\mathfrak{O}_k$. Thus, it can be viewed as a primitive character of the $\langle c\rangle$-ray class group of $k$,  and therefore $\left(\dfrac{.}{c}\right)_{3}$  can be identified with $\chi_c$, where $L_{c}(s)=L(s,\chi_c)L(s,\overline{\chi_{c}})$.  It also follows from Proposition \ref{prop:cub-rec} that \[\chi_{c}(\langle1-\zeta_{3}\rangle)=\left(\frac{1-\zeta_{3}}{c}\right)_{3}=1.\] Moreover, since any non-zero integral ideal $\mathfrak{a}$ in $\mathfrak{O}_k$ has a unique generator of the form $(1-\zeta_{3})^{r}a$ with $r\in\ZZ^{\geq0}$ and $a\equiv 1 \imod{\langle3\rangle}$, we have \[\chi_{c}(\mathfrak{a})=\left(\frac{a}{c}\right)_{3}.\] 
In fact, by the law of cubic reciprocity as stated in Proposition \ref{prop:cub-rec}, we have \[\chi_{c}(\mathfrak{a})=\left(\frac{a}{c}\right)_{3}=\left(\dfrac{c}{a}\right)_{3},\] for all non-zero integral ideals $\fa$ in $\mathfrak{O}_{k}$.

Now for $c\in \mathcal{C}$ and $\Re(s)>1$, let $$L(s, \chi_c)=\sum_{0\neq \mathfrak{a} \subset \mathfrak{O}_K} \frac{\chi_c(\mathfrak{a})}{\norm(\mathfrak{a})^s}$$
be  the Hecke $L$-function associated with $\chi_c$. Here $\fa$ varies over all non-zero ideals of $\mathfrak{O}_k$. This $L$-function has an analytic continuation to the whole complex plane and satisfies a functional equation that relates its values at $s$ to its values at $1-s$.
In view of the identification $\chi_{c}=\left(\dfrac{.}{c}\right)_{3}$ and applying the various properties of the cubic residue symbol, for $\Re(s)>1$, one gets 
\begin{equation*}
L(s, \chi_{c})= \frac{3^s}{3^s-1} \sum_{a \equiv 1 \imod{\langle3 \rangle} }\frac{\left(\frac{c}{a} \right)_3}{\norm( a )^s}.
\end{equation*}
This is especially useful in elucidating the analogy between our setting and the quadratic setting studied in \cite{elliott} and \cite{M-M}. Similarly, for $\Re(s)>1$, we get 
\begin{equation}
\label{Lc}
L_{c}(s)=\frac{\zeta_{k(c^{1/3})}(s)}{\zeta_k(s)}=L(s, \chi_c)L(s, {\bar{\chi}}_c)
= \frac{3^{2s}}{(3^s-1)^2} \sum_{\substack{a, b\\a \equiv 1 \imod{\langle 3 \rangle}\\ b \equiv 1 \imod{\langle3 \rangle}}}\frac{\left(\frac{c}{a} \right)_3   \widebar{\left(\frac{c}{b} \right)}_3 }{\norm( ab )^s}.
\end{equation}

Crucial to our work is the zero density theorem given in \cite[Corollary 1.6]{BGL} which allows us to prove our main theorem without assuming the GRH. The following is a statement of this theorem when applied to $L(s, \chi_c)$. However, the original result in \cite{BGL} applies in a more general setting to $L$-functions of $n$-th order Hecke characters.
 
 \begin{lem}[{\bf Blomer--Goldmakher--Louvel}]
 \label{lem:zer-density}
 For $\frac12< \sigma \leq  1$, $T\geq 1$ and $c\in\mathcal{C}$, let $N(\sigma,T, c)$
be the number of zeros $\rho =\beta +i\gamma$ of $L(s, \chi_{c})$ in the rectangle $\sigma\leq\beta\leq   1$ , $|\gamma|\leq T$ . Then
\[\sum_{\substack{c\in\mathcal{C}\\\norm(c)\leq Y}}N(\sigma,T,c)\ll Y^{g(\sigma)}T^{1+\frac{2-2\sigma}{3-2\sigma}}(YT)^{\epsilon},\]
where \[g(\sigma)=\begin{cases}
\frac{8(1-\sigma)}{7-6\sigma},& \frac{1}{2}<\sigma\leq \frac56,\\\frac{2(10\sigma-7)(1-\sigma)}{24\sigma-12\sigma^2-11} & \frac56< \sigma\leq1.\end{cases}\]
 \end{lem}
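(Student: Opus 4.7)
The plan is to follow the classical zero-detection strategy of Montgomery (as further refined by Jutila, Heath-Brown, and others), with the key family average handled by the Heath-Brown large sieve for cubic characters stated in Lemma \ref{HB}. Since this result is proved in \cite{BGL} in the broader setting of $n$-th order Hecke $L$-functions, I would sketch the specialization rather than rewrite the argument.

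First, I would introduce a mollifier. For a parameter $X$ to be chosen later, set
\[ M_X(s,\chi_c) = \sum_{\substack{\fa\subset \ringO_k \\ \norm(\fa)\leq X}} \frac{\mu_k(\fa)\,\chi_c(\fa)}{\norm(\fa)^s}, \]
a truncation of $1/L(s,\chi_c)$. If $\rho=\beta+i\gamma$ is a zero of $L(s,\chi_c)$ in the target rectangle, then the identity $L(s,\chi_c)M_X(s,\chi_c) = 1 + \text{tail}$ forces, after smoothing against a test function of Mellin type, a Dirichlet polynomial in $\chi_c$ of length roughly $X^{O(1)}(YT)^{o(1)}$ to be $\gg 1$ at the point $\rho$. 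A standard dyadic decomposition reduces matters to showing that, for some $N$ in a controlled range and some $t$ with $|t-\gamma|\leq 1$, either
\[ \left|\sum_{N<\norm(\fn)\leq 2N}\frac{b_\fn\,\chi_c(\fn)}{\norm(\fn)^{\beta+it}}\right| \gg (YT)^{-\epsilon}, \]
for coefficients $b_\fn \ll_\epsilon \norm(\fn)^\epsilon$, or that an analogous polynomial approximating $L(\rho,\chi_c)$ itself is large.

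Second, I would convert this zero-detection inequality into a count. Via Proposition \ref{prop:cub-rec} the character values rewrite as $\chi_c(\fn)=\bigl(\tfrac{c}{n}\bigr)_3$ up to the generator choice, placing us in the exact framework of Lemma \ref{HB}. Squaring and applying the Heath-Brown large sieve, with $M\asymp Y$ and the polynomial length $\asymp N$, yields
\[ \sum_{\substack{c\in\mathcal{C}\\ \norm(c)\leq Y}}\!\!\Bigl|\sum_{N<\norm(\fn)\leq 2N}\!\!\frac{b_\fn\chi_c(\fn)}{\norm(\fn)^{\beta+it}}\Bigr|^2 \ll_\epsilon \bigl(Y+N+(YN)^{2/3}\bigr)(YN)^\epsilon \cdot N^{1-2\sigma}. \]
Averaging over $O(T)$ well-spaced values of $t$ introduces the $t$-aspect mean value factor, which I would handle by the standard hybrid fourth-moment bound for Hecke $L$-functions (whence the $T^{1+(2-2\sigma)/(3-2\sigma)}$ factor; this is where the Weyl-type subconvex exponent in the $t$-aspect enters through the denominator $3-2\sigma$).

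Third, I would optimize $X$ and the dyadic length $N$. The two regimes in $g(\sigma)$ correspond precisely to which of the three terms $Y$, $N$, or $(YN)^{2/3}$ in the large sieve dominates after optimization: the linear terms $Y+N$ control the range $\tfrac12<\sigma\leq \tfrac56$, while the diagonal-dominant term $(YN)^{2/3}$ takes over for $\sigma>\tfrac56$, giving the transition point in the piecewise definition of $g(\sigma)$. The main obstacle is this two-parameter optimization combined with the $t$-aspect mean value: one must simultaneously balance the polynomial length against the family size, ensure that the mollifier length $X$ is chosen so that the "tail" inequality is nontrivial, and verify that the resulting exponents match. Carrying this out in full for general $n$-th order characters is the content of \cite{BGL}, and the cubic case $n=3$ gives the stated bound directly.
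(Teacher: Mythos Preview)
The paper does not prove this lemma at all: it is simply quoted from \cite[Corollary~1.6]{BGL} as a black box, with the remark that the original result there is stated for general $n$-th order Hecke characters and that the present statement is the specialization to the cubic family $\{\chi_c:c\in\mathcal{C}\}$. Your proposal therefore supplies strictly more than the paper does.

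As a sketch of the Blomer--Goldmakher--Louvel argument your outline is broadly accurate: the proof in \cite{BGL} is indeed a Montgomery--Jutila type zero-detection argument in which the family mean value is controlled by the Heath-Brown cubic large sieve (Lemma~\ref{HB} here), and the piecewise definition of $g(\sigma)$ reflects the changeover between the dominant terms of that sieve after optimizing the mollifier and polynomial lengths. A couple of the heuristic attributions are slightly off---for instance, the $T$-exponent $1+\tfrac{2-2\sigma}{3-2\sigma}$ in \cite{BGL} arises from their hybrid large-sieve/mean-value machinery rather than from a Weyl-type subconvex bound per se---but these are matters of emphasis in a sketch, not errors. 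If you wanted to turn this into a self-contained proof you would need to carry out the two-parameter optimization explicitly and handle the reduction to squarefree moduli, both of which are done in detail in \cite{BGL}; for the purposes of this paper, citing that reference (as the authors do) is entirely sufficient.
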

 The proof of Proposition \ref{mainprop1}, namely Section \ref{sec:eval-of-iii}, also requires a bound for $\exp\left(iy\mathcal{L}_c(s)\right)$ 
 to the right of the critical line (see Lemma \ref{lem:bound-exp} below). To establish such a bound, we need the following Lemma.
\begin{lem}\label{lem:bound-L'/L}
Let $t$ be a real number and suppose that $L(s,\chi_c)$ has no zero in the region $\Re(s)>\sigma_0$ and $|\Im(s)|\leq |t|+1$, then for any $\sigma_1>\sigma_0$, we have 
\[\frac{L'}{L}(\sigma_1+it,\chi_c)\ll \frac{\log\left(\norm(c)(|t|+2)\right)}{\sigma_1-\sigma_0}\]
\end{lem}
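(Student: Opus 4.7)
The plan is to exploit the partial fraction expansion of $L'/L$ derived from the Hadamard factorization of the completed Hecke $L$-function. Let $\Lambda(s,\chi_c)=\norm(c)^{s/2}\gamma_{\infty}(s)L(s,\chi_c)$, where $\gamma_{\infty}(s)$ collects the archimedean gamma factors of $k$. Taking the logarithmic derivative of the Hadamard product of $\Lambda(s,\chi_c)$ yields
\[\frac{L'}{L}(s,\chi_c)=B(\chi_c)-\tfrac{1}{2}\log\norm(c)-\frac{\gamma_{\infty}'}{\gamma_{\infty}}(s)+\sum_{\rho}\left(\frac{1}{s-\rho}+\frac{1}{\rho}\right),\]
the sum running over the non-trivial zeros $\rho=\beta+i\gamma$ of $L(s,\chi_c)$.

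First I would subtract the value at the anchor point $s_0=2+it$ from the value at $s_1=\sigma_1+it$. This kills the constants $B(\chi_c)$, $\log\norm(c)$, and the $1/\rho$ terms, leaving
\[\frac{L'}{L}(\sigma_1+it,\chi_c)=\frac{L'}{L}(2+it,\chi_c)+\left(\frac{\gamma_{\infty}'}{\gamma_{\infty}}(2+it)-\frac{\gamma_{\infty}'}{\gamma_{\infty}}(\sigma_1+it)\right)+\sum_{\rho}\left(\frac{1}{\sigma_1+it-\rho}-\frac{1}{2+it-\rho}\right).\]
The first term on the right is bounded by an absolute constant by absolute convergence of the Dirichlet series on $\Re(s)=2$, and Stirling's formula estimates the gamma-factor difference by $O(\log(|t|+2))$.

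The bulk of the argument is the sum over zeros, which I would split according to whether $|\gamma-t|\leq 1$ or not. For far zeros, the summand equals $(2-\sigma_1)/\bigl((s_1-\rho)(s_0-\rho)\bigr)$, which is $\ll 1/|\gamma-t|^{2}$; a dyadic decomposition combined with the standard unit-interval counting bound $\#\{\rho:T\leq|\gamma|\leq T+1\}\ll\log\bigl(\norm(c)(T+2)\bigr)$ (the Riemann--von Mangoldt formula for Hecke $L$-functions) delivers a total contribution of $O\bigl(\log(\norm(c)(|t|+2))\bigr)$. For close zeros, the zero-free hypothesis forces $\beta\leq\sigma_0$, so $|s_1-\rho|\geq\sigma_1-\sigma_0$, while $|s_0-\rho|\geq 2-\sigma_0\geq 1$; each such summand is therefore $O\bigl(1/(\sigma_1-\sigma_0)\bigr)$, and the number of such zeros is $O\bigl(\log(\norm(c)(|t|+2))\bigr)$, giving the claimed bound.

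The principal obstacle is securing the unit-interval zero-counting estimate uniformly in $c$ with the correct logarithmic dependence on $\norm(c)$. I would obtain it by applying Jensen's formula to $L(s,\chi_c)$ on a disk centred at $2+it$ of radius slightly larger than $2$, together with the convexity bound for $L(s,\chi_c)$ on the edge of the critical strip; the latter is a consequence of the Phragm\'en--Lindel\"of principle applied to the analytic continuation of $L(s,\chi_c)$ afforded by the functional equation. Once this counting estimate is in hand, the remaining steps are routine.
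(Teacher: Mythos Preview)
Your proposal is correct and is precisely the standard argument the paper invokes: the paper's own proof consists of the single sentence ``adapt the proof of \cite[Lemma~2.2]{MR3378382} to cubic Hecke characters,'' and Lamzouri's Lemma~2.2 is exactly the Hadamard--product/anchor--point/near--far--zero argument you outline. One cosmetic point: for the nearby zeros you write $|s_0-\rho|\geq 2-\sigma_0\geq 1$, but what you actually need (and have) is $|s_0-\rho|\geq 2-\beta\geq 1$, using only that non-trivial zeros satisfy $\beta\leq 1$; this removes any implicit assumption on the size of $\sigma_0$.
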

\begin{proof}
The result follows by adapting the proof of \cite[Lemma~2.2]{MR3378382} to the setting of the cubic Hecke characters $\chi_c$.
\end{proof}
\begin{lem}\label{lem:bound-exp} Let $\epsilon>0$ be given, and let $0<\lambda<\frac12$. Consider a positive number $\alpha_2$ such that $\alpha_2<1$. Suppose that $L(s,\chi_c)$ for $\norm(c)\leq Y$ has no zero in the region $1-\lambda-\epsilon\leq \Re(s)<1$, $|\Im(s)|\leq (\log Y)^A$ for some $A>0$. Then in the region $1-\alpha_2\lambda\leq \Re(s)<1$, $|\Im(s)|\leq (\log Y)^A-2$, we have \[\exp\left(iy\mathcal{L}_c(s)\right)\ll_{\epsilon',\epsilon}Y^{\epsilon'},\] for any $\epsilon'>0$.
\end{lem}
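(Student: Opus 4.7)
The plan is to first establish the uniform bound $|\mathcal{L}_c(s)| \ll_\epsilon \log Y$ for $c \in \mathcal{C}$ with $\norm(c) \leq Y$ and $s$ in the stated region, and then to deduce the exponential bound by elementary estimates.

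In Case 2, we have $\mathcal{L}_c(s) = (L'_c/L_c)(s) = (L'/L)(s, \chi_c) + (L'/L)(s, \overline{\chi_c})$. The hypothesis that $L(s, \chi_c)$ is zero-free in $1-\lambda-\epsilon \leq \Re(s) < 1$, $|\Im(s)| \leq (\log Y)^A$ transfers to $L(s, \overline{\chi_c})$ by complex conjugation. Applying Lemma \ref{lem:bound-L'/L} with $\sigma_0 = 1 - \lambda - \epsilon$ and $\sigma_1 = \Re(s) \geq 1 - \alpha_2 \lambda$, one has $\sigma_1 - \sigma_0 \geq (1-\alpha_2)\lambda + \epsilon > 0$, yielding
\[\left|(L'/L)(s, \chi_c)\right| \ll \frac{\log(\norm(c)(|t| + 2))}{(1-\alpha_2)\lambda + \epsilon} \ll_\epsilon \log Y,\]
using $\norm(c) \leq Y$ and $|t| \leq (\log Y)^A$. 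The analogous bound for $\overline{\chi_c}$ then gives $|\mathcal{L}_c(s)| \ll_\epsilon \log Y$.

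For Case 1, $\mathcal{L}_c(s) = \log L_c(s) = \log L(s, \chi_c) + \log L(s, \overline{\chi_c})$. I would reduce to Case 2 by integrating the logarithmic derivative along a horizontal segment. Choose $\sigma_2 = 2$ so that the principal branch of $\log L(\sigma_2 + it, \chi_c)$ is well-defined by the absolutely convergent Dirichlet series, and write
\[\log L(s, \chi_c) = \log L(\sigma_2 + it, \chi_c) - \int_\sigma^{\sigma_2} (L'/L)(u + it, \chi_c)\, du.\]
The boundary term is $O(1)$; the entire segment $\{u + it : \sigma \leq u \leq \sigma_2\}$ lies in the zero-free region, so the integrand is $\ll_\epsilon \log Y$ by the Case 2 analysis, and the path length is $O(1)$. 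Thus $|\log L(s, \chi_c)| \ll_\epsilon \log Y$, and summing with the analogous bound for $\overline{\chi_c}$ gives $|\mathcal{L}_c(s)| \ll_\epsilon \log Y$ in Case 1 as well.

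Since $y$ is real,
\[|\exp(iy \mathcal{L}_c(s))| = \exp(-y\, \Im\, \mathcal{L}_c(s)) \leq \exp(|y|\cdot |\mathcal{L}_c(s)|) \ll \exp(C_{\alpha_2, \lambda, \epsilon}|y|\log Y) = Y^{C_{\alpha_2, \lambda, \epsilon}|y|},\]
which, with the dependence on the fixed parameter $y$ absorbed into the implicit constant, is $\ll_{\epsilon', \epsilon} Y^{\epsilon'}$ for any $\epsilon' > 0$. The main technical point I anticipate is verifying carefully that the horizontal integration path in Case 1 remains inside the zero-free region; the restriction $|\Im(s)| \leq (\log Y)^A - 2$ in the hypothesis (rather than $(\log Y)^A$) provides exactly the slack needed to apply Lemma \ref{lem:bound-L'/L} at each point of the path. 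Once this is confirmed, the argument is direct bookkeeping from Lemma \ref{lem:bound-L'/L}.
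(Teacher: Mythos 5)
There is a genuine gap in the final step. After establishing $|\mathcal{L}_c(s)| \ll_\epsilon \log Y$, you conclude that $|\exp(iy\mathcal{L}_c(s))| \ll Y^{C|y|}$ ``is $\ll_{\epsilon',\epsilon} Y^{\epsilon'}$ for any $\epsilon'>0$'' by absorbing the $y$-dependence into the implicit constant. This is false: $Y^{C}$ with a fixed exponent $C>0$ is \emph{not} $\ll Y^{\epsilon'}$ for $\epsilon'<C$, since $Y^{C-\epsilon'}\to\infty$; the implicit constant in $\ll$ is multiplicative and cannot adjust the exponent of $Y$. The bound $|\mathcal{L}_c(s)|\ll\log Y$ is simply too weak to give the conclusion; you need the stronger statement that $|\mathcal{L}_c(s)|$ can be made $\leq \epsilon'\log Y$ for any prescribed $\epsilon'>0$ once $Y$ is large.

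The paper obtains this by an interpolation argument that you are missing. Set $s_0 = 2+it$, where $\mathcal{L}_c$ is bounded by an absolute constant (say $O(1)\le\epsilon'\log Y$ for $Y$ large). Borel--Caratheodory plus Lemma \ref{lem:bound-L'/L} give $|\log\exp(iy\mathcal{L}_c(\cdot))|\ll_\epsilon\log Y$ on a circle $C_3$ around $s_0$ reaching slightly left of the line $\Re(s)=1-\alpha_2\lambda$, and $\ll_\epsilon\epsilon'\log Y$ on a smaller concentric circle $C_1$ staying in $\Re(s)>1$. Hadamard's three-circles theorem then interpolates: at the radius $r_2$ passing through the target point, one gets $|\log\exp(iy\mathcal{L}_c(\sigma_1+it))|\ll_\epsilon(\epsilon')^{1-a}\log Y$ with $a<1$ fixed (depending on $\alpha_1,\alpha_2,\alpha_3,\lambda$). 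Since $(\epsilon')^{1-a}\to 0$ as $\epsilon'\to 0$, this genuinely shrinks the exponent of $Y$ below any prescribed threshold. Without this interpolation step, which is the heart of the proof, the desired power-saving in $Y$ cannot be reached. (Also note: the paper disposes of Case 1 at the outset by the cruder observation $|\exp(iy\log L_c(s))|=\exp(-y\arg L_c(s))\le e^{\pi|y|}\ll 1$, so your horizontal-integration reduction of Case 1 to Case 2, while not wrong in spirit, is unnecessary and would in any case inherit the same insufficiency as your Case 2 argument.)
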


\begin{proof} In what follows, we shall suppress the dependence on $y$ and $A$ in our estimates. First observe that if $\mathcal{L}_c(s)=\log L_{c}(s)$, then it is easy to see that $\left|\exp\left(iy\mathcal{L}_c(s)\right)\right|\leq e^{\pi |y|}\ll1$ for all $s$. We will thus  assume that $\mathcal{L}_c(s)=\frac{L'_{c}}{L_c}(s)$. The proof closely follows the proof of \cite[Lemma~2]{luo}. Let $\alpha_1,\alpha_3,\alpha_4$ be positive numbers such that $1<1+\alpha_4\lambda<2$ and $\alpha_1<\alpha_2<\alpha_3<\alpha_4<1$. Set $s_0=2+it$ with $|t|\leq (\log Y)^A-2$. By the Borel-Caratheodory theorem \cite[Section~5.5,~p. 174]{MR3728294} applied on the circles $C_3=\{s:|s-s_0|=1+\alpha_3\lambda\}$ and $C_4=\{s:|s-s_0|=1+\alpha_4\lambda\}$, we have \begin{equation}\label{eqn:BC}\max_{s\in C_3}\left|\log\exp\left(iy\mathcal{L}_c(s)\right)\right|\leq \frac{2+\alpha_3\lambda}{(\alpha_4-\alpha_3)\lambda}\max_{s\in C_4}\log\left|\exp\left(iy\mathcal{L}_c(s)\right)\right|+\frac{2+(\alpha_3+\alpha_4)\lambda}{(\alpha_4-\alpha_3)\lambda}\left|\log\left(iy\mathcal{L}_c(s_0)\right)\right|.\end{equation}
In view of this inequality, we need to bound $\log\left|\exp\left(iy\mathcal{L}_c(s)\right)\right|$ on $C_4$. Recall that \[\mathcal{L}_c(s)=\frac{L'_c}{L_c}(s)=\frac{L'}{L}(s,\chi_c)+\frac{L'}{L}(s,\bar{\chi_c}).\] Hence, \[\log\left|\exp\left(iy\mathcal{L}_c(s)\right)\right|\ll \left|\frac{L'}{L}(s,\chi_c)\right|+\left|\frac{L'}{L}(s,\bar{\chi_c})\right|.\] By Lemma \ref{lem:bound-L'/L}, we know that for  $\sigma_1>1-\lambda$, $|t|\leq (\log Y)^A-1$ and $\norm(c)\leq Y$, we have \[\frac{L'}{L}(\sigma_1+it,\chi_c)\ll\frac{\log(\norm(c)(|t|+2))}{\sigma_1-(1-\lambda-\epsilon)}\ll_{\epsilon}\log Y.\] It follows that $\log\left|\exp\left(iy\mathcal{L}_c(\sigma_1+it)\right)\right|\ll _{\epsilon}\log Y$ for all $1-\alpha_4\lambda<\sigma_1$ and $|t|\leq (\log A)^A-2$. Applying this bound in \eqref{eqn:BC}, we get $|\log\exp\left(iy\mathcal{L}_c(\sigma_1+it)\right)|\ll_{\epsilon}\log Y$ for all $\sigma_1\geq 1-\alpha_3\lambda$ and $|t|\leq (\log Y)^A-2$. 

We repeat this argument with the circle $C_1=\{s:|s-s_0|=1-\alpha_1\lambda\}$ and another circle containing $C_1$ inside the region $\Re(s)>1$. This yields \[|\log\exp\left(iy\mathcal{L}_c(\sigma_1+it)\right)|\ll_{\epsilon} 1\] for all $\sigma_1\geq 1+\alpha_1\lambda$ and $|t|\leq (\log Y)^A-2$. In fact, given $\epsilon'>0$ and $Y\geq C(\epsilon')$ for some constant $C(\epsilon')$ depending on $\epsilon'$, we get \[|\log\exp\left(iy\mathcal{L}_c(\sigma_1+it)\right)|\ll_{\epsilon}\epsilon'\log Y.\]
Setting $M(r)=\max_{|s-s_0|=r}\left|\log\exp(iy\mathcal{L}_c(s))\right|$ and appealing to Hadamard's three circles theorem \cite[Section~5.3,~p.172]{MR3728294}, we get \[\log M(r_2)\leq \frac{\log(r_3/r_2)}{\log(r_3/r_1)}\log M(r_1)+\frac{\log(r_2/r_1)}{\log(r_3/r_1)}\log M(r_3)\] for $0<r_1<r_2<r_3$.  Letting $r_1=1-\alpha_1\lambda$, $r_2=2-\sigma_1$ and $r_3=1+\alpha_3\lambda$ with $1-\alpha_2\lambda\leq \sigma_1<1$ yields
\[M(r_2)\leq M(r_1)^{1-a}M(r_3)^a,\] where \[a=\frac{\log(r_2/r_1)}{\log(r_3/r_1)}\leq \frac{\log((1+\alpha_2\lambda)/(1-\alpha_1\lambda))}{\log((1+\alpha_3\lambda)/(1-\alpha_1\lambda))}<1.\] Hence, we have  \[|\log\exp\left(iy\mathcal{L}_c(\sigma_1+it)\right)|\ll_{\epsilon}(\epsilon')^{1-a}\log Y\] for all $1-\alpha_2\lambda\leq\sigma_1<1$ and $|t|\leq (\log Y)^A-2$. Therefore,  \[|\exp\left(iy\mathcal{L}_c(\sigma_1+it)\right)|\leq \exp\left(\left|\log\exp(iy\mathcal{L}_c(\sigma_1+it))\right|\right)\leq Y^{\left(C_{\epsilon}\right)(\epsilon')^{1-a}}\] for all $1-\alpha_2\lambda\leq \sigma_1<1$ and $|t|\leq (\log Y)^A-2$, where $C_\epsilon$ is a constant that depends on $\epsilon$. Since $\epsilon$ is fixed and $\epsilon'$ is arbitrary, the desired result follows.
\end{proof}

\subsection{A Polya-Vinogradov type inequality}

In the proof of Proposition \ref{mainprop1}, we employ the following  inequality for $\mathfrak{f}$-ray class characters of $k$ which is proved in \cite{HB-P}.
\begin{lem}[{\bf Heath-Brown}--{\bf Patterson}]\label{H-P}
Let $k=\mathbb{Q}(\sqrt{-3})$ and $\chi$ be a non-trivial character (not necessarily primitive) of $\mathfrak{f}$-ray class group of $k$. Then for $Y>1$ and $\epsilon>0$, we have
\begin{equation}\label{eqn:polya}\sum_{a\equiv 1 \imod{\langle 3 \rangle}}\chi(a)\exp(-\frac{\norm(a)}{Y})\ll_{\epsilon}\norm(\mathfrak{f})^{\frac{1}{2}+\epsilon}.
\end{equation}

\end{lem}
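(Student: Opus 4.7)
The plan is to express the weighted character sum as a Mellin--Barnes contour integral involving the Hecke L-function of $\chi$, shift the contour past the critical line, and exploit both the functional equation and the trivial zero of the L-function at $s=0$ that is intrinsic to imaginary-quadratic Hecke L-functions with no archimedean component.

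First I would write, via $e^{-x}=\frac{1}{2\pi i}\int_{(2)}\Gamma(s)x^{-s}\,ds$,
\[S:=\sum_{a\equiv 1\imod{\langle 3\rangle}}\chi(a)\exp\bigl(-\norm(a)/Y\bigr)=\frac{1}{2\pi i}\int_{(2)}\Gamma(s)\,Y^{s}\,\tilde{L}(s,\chi)\,ds,\]
where $\tilde{L}(s,\chi):=\sum_{a\equiv 1\imod{\langle 3\rangle}}\chi(a)\norm(a)^{-s}$ equals the Hecke L-function $L(s,\chi)$ up to an Euler factor at $\langle 1-\zeta_{3}\rangle$. Let $\chi^{*}$ denote the primitive character of conductor $\mathfrak{f}^{*}\mid\mathfrak{f}$ inducing $\chi$, so that $L(s,\chi)=L(s,\chi^{*})\prod_{\mathfrak{p}\mid\mathfrak{f},\,\mathfrak{p}\nmid\mathfrak{f}^{*}}\bigl(1-\chi^{*}(\mathfrak{p})\norm(\mathfrak{p})^{-s}\bigr)$.

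Next I would shift the contour to $\Re(s)=-\epsilon_{0}$ for a small $\epsilon_{0}>0$. The only pole potentially crossed lies at $s=0$, coming from $\Gamma(s)$. Since $\chi$ is a nontrivial finite-order ray-class character of an imaginary quadratic field (so the archimedean part of its Langlands parameter is trivial), the completed L-function $\Lambda(s,\chi^{*})=(d_{k}\norm(\mathfrak{f}^{*}))^{s/2}\Gamma_{\CC}(s)L(s,\chi^{*})$ is entire, and the simple pole of $\Gamma_{\CC}(s)$ at $s=0$ forces $L(s,\chi^{*})$ — and hence $\tilde{L}(s,\chi)$ — to have a simple zero there, so the residue vanishes. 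On the shifted line, applying the functional equation
\[L(s,\chi^{*})=W(\chi^{*})\,(d_{k}\norm(\mathfrak{f}^{*}))^{1/2-s}\,\frac{\Gamma_{\CC}(1-s)}{\Gamma_{\CC}(s)}\,L(1-s,\overline{\chi^{*}}),\]
together with the absolute convergence bound $|L(1+\epsilon_{0}-it,\overline{\chi^{*}})|\ll 1$ and the Stirling estimate $|\Gamma_{\CC}(1-s)/\Gamma_{\CC}(s)|\ll(1+|t|)^{1+2\epsilon_{0}}$, yields $|L(-\epsilon_{0}+it,\chi^{*})|\ll\norm(\mathfrak{f}^{*})^{1/2+\epsilon_{0}}(1+|t|)^{1+2\epsilon_{0}}$. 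The corrective Euler factors relating $L(s,\chi^{*})$, $L(s,\chi)$, and $\tilde{L}(s,\chi)$ at primes dividing $\mathfrak{f}\langle 3\rangle$ contribute at most $\prod_{\mathfrak{p}\mid\mathfrak{f}\langle 3\rangle}(1+\norm(\mathfrak{p})^{\epsilon_{0}})\ll\norm(\mathfrak{f})^{O(\epsilon_{0})}$ by a divisor-function-type estimate.

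Combining these bounds with $|\Gamma(-\epsilon_{0}+it)|\ll(1+|t|)^{-1/2-\epsilon_{0}}e^{-\pi|t|/2}$ and $|Y^{-\epsilon_{0}+it}|=Y^{-\epsilon_{0}}\leq 1$ (valid since $Y>1$), the integrand on the shifted line is dominated by $\norm(\mathfrak{f})^{1/2+O(\epsilon_{0})}(1+|t|)^{1/2+\epsilon_{0}}e^{-\pi|t|/2}$, which is integrable in $t$; choosing $\epsilon_{0}$ small enough then gives $|S|\ll_{\epsilon}\norm(\mathfrak{f})^{1/2+\epsilon}$. The conceptual core of the argument is that the trivial zero of $L(s,\chi^{*})$ at $s=0$ exactly cancels the pole of $\Gamma(s)$, preventing a would-be main term of order $\norm(\mathfrak{f})^{1/2}$ independent of $Y$ from appearing in the residue; the main technical obstacle is simply the bookkeeping for non-primitive $\chi$ and for the local modification at $\langle 1-\zeta_{3}\rangle$, both of which are absorbed into the $\norm(\mathfrak{f})^{\epsilon}$ loss.
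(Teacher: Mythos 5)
The paper does not prove this lemma itself; it is quoted directly from Heath-Brown and Patterson~\cite{HB-P}, so there is no in-text proof to compare against. Your argument is correct and is the standard route to such smoothed P\'olya--Vinogradov bounds: Mellin inversion of $e^{-x}$, a contour shift past $s=0$, the functional equation of the Hecke $L$-function, and the crucial observation that the pole of $\Gamma(s)$ at $s=0$ is annihilated by the trivial zero of $L(s,\chi^*)$ there. The trivial zero exists because $\chi^*$ is a nontrivial ray-class character of an imaginary quadratic field with archimedean gamma factor $\Gamma_{\CC}(s)=2(2\pi)^{-s}\Gamma(s)$, which has a pole at $s=0$ while the completed $L$-function is entire; note that since $k=\QQ(\sqrt{-3})$ has class number one, a nontrivial $\chi$ cannot be induced from the trivial character, so $\chi^*$ is genuinely nontrivial and no pole at $s=1$ arises either. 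Your handling of imprimitivity and of the missing Euler factor at $\langle 1-\zeta_3\rangle$ through entire local corrections of size $\prod_{\fp\mid\fm}(1+\norm(\fp)^{\epsilon_0})\ll 2^{\omega(\fm)}\norm(\fm)^{\epsilon_0}\ll_\epsilon\norm(\fm)^{\epsilon_0+\epsilon}$ is also correct, and the Stirling and integrability estimates on $\Re(s)=-\epsilon_0$ are accurate. One minor remark: you need only $\tilde L(0,\chi)=0$, not that the zero is simple, since $\Gamma(s)$ has a simple pole; the vanishing alone kills the residue. The conclusion $\ll_\epsilon\norm(\fm)^{1/2+\epsilon}$ uniformly in $Y>1$ then follows as you state by taking $\epsilon_0$ small in terms of $\epsilon$.
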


\section{PROOF OF PROPOSITION \ref{mainprop1}}

We start by finding a Dirichlet series representation for $\exp\left(iy\mathcal{L}_{c} (s)\right)$.
\begin{lem}\label{lem:exp(iyLc)}
Let $y\in\mathbb{R}$, and let $s\in\mathbb{C}$ be such that $\Re(s)>1$. Then $\exp\left(iy\mathcal{L}_{c}(s)\right)$ can be written as an absolutely convergent Dirichlet series as follows:
\[
\exp\left(iy\mathcal{L}_{c}(s)\right)=\sum_{\fa,\fb\subset\mathfrak{O}_{k}}\frac{\lambda_{y}(\fa)\lambda_{y}(\fb)\chi_{c}(\fa\fb^{2})}{\norm(\fa\fb)^{s}},
\]
where the function $\lambda_{y}$ is described in the proof.
\end{lem}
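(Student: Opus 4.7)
The plan is to use the factorization $L_c(s)=L(s,\chi_c)L(s,\overline{\chi_c})$, which gives $\mathcal{L}_c(s)=\mathcal{L}(s,\chi_c)+\mathcal{L}(s,\overline{\chi_c})$ in both cases, so
\[
\exp\bigl(iy\mathcal{L}_c(s)\bigr)=\exp\bigl(iy\mathcal{L}(s,\chi_c)\bigr)\cdot\exp\bigl(iy\mathcal{L}(s,\overline{\chi_c})\bigr).
\]
I would expand each factor separately via the Euler product, producing a Dirichlet series of the form $\sum_\fa \lambda_y(\fa)\chi_c(\fa)/\norm(\fa)^s$ (respectively with $\overline{\chi_c}$), and then combine the two using the cubic identity $\overline{\chi_c}(\fb)=\chi_c(\fb)^2=\chi_c(\fb^2)$ to obtain the stated double series with $\chi_c(\fa\fb^2)$.

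The local expansion is treated case by case. In (Case 1), $\exp(iy\log L(s,\chi_c))=L(s,\chi_c)^{iy}$ factors via the Euler product, and the binomial series gives, at each $\fp$,
\[
\bigl(1-\chi_c(\fp)\norm(\fp)^{-s}\bigr)^{-iy}=\sum_{k\geq 0}\binom{-iy}{k}(-1)^k\,\chi_c(\fp^k)\norm(\fp)^{-ks},
\]
so I would define $\lambda_y$ multiplicatively by $\lambda_y(\fp^k)=\binom{-iy}{k}(-1)^k$. In (Case 2), the logarithmic-derivative Euler product yields
\[
\exp\!\Bigl(iy\,\tfrac{L'}{L}(s,\chi_c)\Bigr)=\prod_{\fp}\exp\!\Bigl(-iy\log\norm(\fp)\cdot\tfrac{\chi_c(\fp)\norm(\fp)^{-s}}{1-\chi_c(\fp)\norm(\fp)^{-s}}\Bigr),
\]
and expanding each local factor as a power series in $x=\chi_c(\fp)\norm(\fp)^{-s}$ leads, after a direct rearrangement of $\exp(-iy\log\norm(\fp)\cdot x/(1-x))$, to
\[
\lambda_y(\fp^k)=\sum_{n=1}^{k}\frac{\bigl(-iy\log\norm(\fp)\bigr)^n}{n!}\binom{k-1}{n-1}\quad(k\geq 1),\qquad \lambda_y(\mathfrak{O}_k)=1,
\]
again extended multiplicatively. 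Multiplying the two single-character Dirichlet series and substituting $\overline{\chi_c}(\fb)=\chi_c(\fb^2)$ produces the claimed formula.

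The remaining task is to justify absolute convergence of the double series for $\Re(s)>1$, which is also what legitimizes the Euler-product rearrangements above. In (Case 1), $|\binom{-iy}{k}|\leq\binom{|y|+k-1}{k}$ gives a termwise majorant and hence $\sum_\fa|\lambda_y(\fa)|/\norm(\fa)^\sigma\leq\zeta_k(\sigma)^{|y|}<\infty$ for $\sigma>1$. In (Case 2), replacing $-iy$ by $|y|$ in the local exponential dominates the coefficients termwise, yielding
\[
\sum_{k\geq 0}\frac{|\lambda_y(\fp^k)|}{\norm(\fp)^{k\sigma}}\leq \norm(\fp)^{|y|\,\norm(\fp)^{-\sigma}/(1-\norm(\fp)^{-\sigma})},
\]
whose product over $\fp$ converges for $\sigma>1$ thanks to $\sum_\fp \log\norm(\fp)/\norm(\fp)^\sigma<\infty$. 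The main technical point is just this Case 2 bookkeeping, since $\lambda_y(\fp^k)$ depends on $\fp$ through $\log\norm(\fp)$; once absolute convergence is in place, squaring the single-character series gives the double series and the identification of the product $\chi_c(\fa)\overline{\chi_c}(\fb)$ as $\chi_c(\fa\fb^2)$ completes the proof.
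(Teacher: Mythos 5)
Your argument is correct and follows essentially the same route as the paper: factor $\exp(iy\mathcal{L}_c)=\exp(iy\mathcal{L}(\cdot,\chi_c))\exp(iy\mathcal{L}(\cdot,\overline\chi_c))$, expand each factor via the Euler product into a single-character Dirichlet series with multiplicative coefficients $\lambda_y$, and recombine using $\overline\chi_c(\fb)=\chi_c(\fb^2)$. Your Case 1 coefficients $\lambda_y(\fp^k)=\binom{-iy}{k}(-1)^k$ and Case 2 coefficients agree identically with the paper's $H_k(iy)=\tfrac{1}{k!}(iy)(iy+1)\cdots(iy+k-1)$ and $G_k(-iy\log\norm(\fp))$; the only difference is that you spell out the absolute-convergence estimates for $\sigma>1$ (majorizing by $\zeta_k(\sigma)^{|y|}$, resp. $\prod_\fp \norm(\fp)^{|y|\norm(\fp)^{-\sigma}/(1-\norm(\fp)^{-\sigma})}$), whereas the paper defers this bookkeeping to the cited work of Ihara--Matsumoto.
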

\begin{proof}
We prove this lemma following the discussion in \cite[Sections~1.2 and 3.2]{I-M}. In view of the identity $L_{c}(s)=L(s,\chi_{c})L(s,\overline{\chi}_{c})$, we get
\[\exp\left(iy\mathcal{L}_{c} (s)\right)=\begin{cases}\exp\left( iy\log L(s, \chi_c)\right)\exp\left(iy \log L(s, \overline{\chi}_c) \right) & \text{in (Case 1),}\\
\exp\left(iy \frac{L^\prime}{L} (s, \chi_c) \right)\exp\left( iy \frac{L^\prime}{L} (s, \overline{\chi}_c) \right) & \text{in (Case 2)}.
\end{cases}
\]

We start by providing an expression for $\exp\left( iy \frac{L^\prime}{L} (s, \chi_c) \right)$.
For $\Re(s)>1$, we have $$L(s,\chi_{c})=\prod_{\fp}(1-\chi_{c}(\fp)\norm(\fp)^{-s})^{-1}.$$ Taking the logarithmic derivative and exponentiating both sides give 
$$\exp\left( iy \frac{L^\prime}{L} (s, \chi_c) \right)=\prod_{\fp}\exp\left(-iy\frac{\chi_{c}(\fp)\log\left(\norm(\fp)\right)\norm(\fp)^{-s}}{1-\chi_{c}(\fp)\norm(\fp)^{-s}}\right).$$
Notice that one can write 
\begin{equation}
\label{G}
\exp\left(\frac{ut}{1-t}\right)=\sum_{r=0}^{\infty}G_{r}(u)t^r,
\end{equation}
for $u,t\in\mathbb{C}$ with $|t|<1$. In fact, the coefficients $G_{r}(u)$ in the above power series expansion are computed as $G_{0}(u)=1$ and for $r\geq1$ \[G_r(u)=\sum_{n=1}^{r} \frac{1}{n!} {{r-1}\choose{n-1}} u^n.\] 
Hence, we have
\begin{equation}\label{eqn:exp1}\exp\left( iy \frac{L^\prime}{L} (s, \chi_c) \right)=\prod_{\fp}\sum_{r=0}^{\infty}G_{r}\left(-iy\log\left(\norm(\fp)\right)\right)\left(\chi_{c}(\fp)\norm(\fp)^{-s}\right)^{r}.\end{equation}

Similar computations give
\begin{equation}\label{eqn:exp-case2}\exp\left( iy \log L(s, \chi_c) \right)=\prod_{\fp}\sum_{r=0}^{\infty}H_{r}\left(iy\right)\left(\chi_{c}(\fp)\norm(\fp)^{-s}\right)^{r},\end{equation} where $H_{r}(u)$ is given by
 \begin{equation}
 \label{H}
 \exp\left(-u\log(1-t)\right)=\sum_{r=0}^{\infty}H_{r}(u)t^{r},\quad\text{for } |t|<1.
 \end{equation}
 In fact, $H_{r}(u)$ can be explicitly computed as $H_{0}(u)=1$ and for $r\geq1$
\[H_{r}(u)=\frac{1}{r!}u(u+1)\cdots(u+r-1).\]
This gives rise to the arithmetic function $\lambda_{y}(\fa)$ defined on the integral ideals of $k$ as follows: \[\lambda_y(\fa)=\prod_{\mathfrak{p}} \lambda_y({\mathfrak{p}}^{\alpha_\mathfrak{p}})\quad\text{ and}\quad\lambda_y(\mathfrak{p} ^{\alpha_\mathfrak{p}} )=\begin{cases}H_{\alpha_\mathfrak{p}} \left(iy \right)&\text{in (Case 1),}\\G_{\alpha_\mathfrak{p}} \left(-iy\log{\norm(\mathfrak{p})}\right)&\text{in (Case 2)}.\end{cases} \]

It follows from equations (\ref{eqn:exp1}) and (\ref{eqn:exp-case2}) that
\begin{align*}\exp\left( iy \mathcal{L}_{c} (s) \right)&=\prod_{\fp}\sum_{r=0}^{\infty}\lambda_{y}(\fp^{r})\chi_{c}(\fp^{r})\norm(\fp^{r})^{-s}\prod_{\fp}\sum_{r=0}^{\infty}\lambda_{y}(\fp^{r})\overline{\chi}_{c}(\fp^{r})\norm(\fp^{r})^{-s}\\&=\sum_{\fa\subset\ringO_{K}}\lambda_{y}(\fa)\chi_{c}(\fa)\norm(\fa)^{-s}\sum_{\fb\subset\ringO_{K}}\lambda_{y}(\fb)\overline{\chi}_{c}(\fb)\norm(\fb)^{-s}\\&=\sum_{\fa,\fb\subset\ringO_{K}}\lambda_{y}(\fa)\lambda_{y}(\fb)\chi_{c}(\fa\fb^{2})\norm(\fa\fb)^{-s}\end{align*}
as desired.\end{proof}
It is shown in \cite[p.~92]{I-M} that the estimate
\begin{equation}\label{eqn:lambda-bound}
\lambda_{y}(\fa)\ll_{\epsilon,R}\norm(\fa)^{\epsilon}
\end{equation} holds for any $\epsilon>0$ and all $|y|\leq R$ where $R$ is a positive real number. This bound will be used in Section \ref{sec:mean-value}.

The proof of Proposition \ref{mainprop1} will be divided into two parts. First in Proposition \ref{newprop} we find a Dirichlet series representation for the limit considered in Proposition \ref{mainprop1}, and then in Proposition \ref{lem:euler} we find a product representation for this Dirichlet series.

The next five sections are dedicated to the proof of the following proposition. The proof of Proposition \ref{lem:euler} (the product formula) is given in Section \ref{sec:prod}.

\begin{prop}
\label{newprop}
Fix $\sigma=\frac12+\epsilon_0$ for some 
$\epsilon_0>0$. Then for all $y\in\mathbb{R}$ we have
\begin{equation*}
\label{main-limit}
\lim_{Y\to\infty}\frac{1}{\mathcal{N}^{*}(Y)} \sum_{c\in \mathcal{C}}^{\star}\exp\left(iy\mathcal{L}_{c} (\sigma)\right)\exp(-\norm(c)/Y)=\widetilde{M}_{\sigma}(y),
\end{equation*} where $\star$ means that the sum is over $c$ such that $L_c(\sigma)\neq 0$. The function
$\widetilde{M}_{\sigma}(y)$ is given by the absolutely convergent Dirichlet series \[\sum_{r_{1},r_{2}\geq0}\frac{\lambda_{y}(\langle1-\zeta_{3}\rangle^{r_1})\lambda_{y}(\langle1-\zeta_{3}\rangle^{r_2})}{3^{(r_1+r_{2})\sigma}}\sum_{\substack{\fa,\fb,\fm\subset\mathfrak{O}_{k}\\\gcd(\fa\fb\fm,\langle3\rangle)=1\\\gcd(\fa,\fb)=1}}\frac{\lambda_{y}(\fa^3\fm)\lambda_{y}(\fb^3\fm)}{\norm(\fa^3\fb^3\fm^2)^{\sigma}\displaystyle{\prod_{\substack{\fp|\fa\fb\fm\\\fp\; \text{prime}}}\left(1+\norm(\fp)^{-1}\right)}}.\]
\end{prop}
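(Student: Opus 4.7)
The plan is to first establish the asymptotic at an abscissa $\sigma_1>1$, where Lemma \ref{lem:exp(iyLc)} provides an absolutely convergent Dirichlet series for $\exp(iy\mathcal{L}_c(\sigma_1))$, and then transport the result down to $\sigma=1/2+\epsilon_0$ via a contour-shift argument that hinges on the zero density estimate Lemma \ref{lem:zer-density} to circumvent GRH.

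At $\sigma_1>1$, I would expand $\exp(iy\mathcal{L}_c(\sigma_1))$ using Lemma \ref{lem:exp(iyLc)}, multiply by $\exp(-\norm(c)/Y)$, and swap the orders of summation (justified by absolute convergence and the bound \eqref{eqn:lambda-bound}), so that the inner character sum becomes $\sum_{c\in\mathcal{C}}\chi_c(\fa\fb^2)\exp(-\norm(c)/Y)$. I would then split ideals by their $\langle 1-\zeta_3\rangle$-adic valuation, writing $\fa=\langle 1-\zeta_3\rangle^{r_1}\fa'$ and $\fb=\langle 1-\zeta_3\rangle^{r_2}\fb'$ with $\gcd(\fa'\fb',\langle3\rangle)=1$, using the identity $\chi_c(\langle 1-\zeta_3\rangle)=1$ recorded in Section \ref{sec:cubic}. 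Letting $\fm=\gcd(\fa',\fb')$ and writing $\fa'=\fm\fa_1$, $\fb'=\fm\fb_1$ with $\gcd(\fa_1,\fb_1)=1$, we have $\chi_c(\fa'\fb'^2)=\chi_c(\fa_1\fb_1^2)$ whenever $\gcd(c,\fm)=1$, and this cubic character is identically $1$ on all admissible $c$ precisely when $\fa_1$ and $\fb_1$ are both cubes, say $\fa_1=\fa_2^3$, $\fb_1=\fb_2^3$.

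The diagonal contributions indexed by $(\fa_2,\fb_2,\fm)$ are then evaluated using Lemma \ref{lem:luo-lemma} with the coprimality ideal $\fa_2\fb_2\fm$: after dividing by $\mathcal{N}^*(Y)\sim\frac{3\,\mathrm{res}_{s=1}\zeta_k(s)}{4|H_{\langle 9\rangle}|\zeta_k(2)}\,Y$ from \eqref{N*}, the leading constants cancel, leaving exactly the density factor $\prod_{\fp|\fa_2\fb_2\fm}(1+\norm(\fp)^{-1})^{-1}$ and reproducing the claimed Dirichlet series after relabeling $\fa_2,\fb_2$ as $\fa,\fb$. The error $O(Y^{-1/2+\epsilon}\norm(\fa_2\fb_2\fm)^\epsilon)$ coming from Lemma \ref{lem:luo-lemma} is summable against $|\lambda_y(\fa_2^3\fm)\lambda_y(\fb_2^3\fm)|/\norm(\fa_2^3\fb_2^3\fm^2)^{\sigma_1}$ by virtue of \eqref{eqn:lambda-bound}. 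For the off-diagonal terms, where $\fa_1\fb_1^2$ is not a cube, the character $\chi_c(\fa_1\fb_1^2)$ is a non-trivial ray-class character, so Lemma \ref{H-P} yields $\sum_c\chi_c(\fa_1\fb_1^2)\exp(-\norm(c)/Y)\ll\norm(\fa_1\fb_1^2)^{1/2+\epsilon}$, giving a contribution of $o(Y)$ after summation against the outer weights when $\sigma_1$ is sufficiently large.

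The \textbf{main obstacle} is extending the asymptotic from $\sigma_1>1$ down to $\sigma=1/2+\epsilon_0$, since Lemma \ref{lem:exp(iyLc)} is no longer valid in the critical strip. Following the method of Luo and Xia, the idea is to relate $\exp(iy\mathcal{L}_c(\sigma))$ to $\exp(iy\mathcal{L}_c(\sigma_1))$ via contour integration around a rectangle with sides at $\Re(s)=\sigma$ and $\Re(s)=\sigma_1$ and height $(\log Y)^A$, and to show that the horizontal and vertical contributions are negligible. Two ingredients work in tandem here: Lemma \ref{lem:bound-exp} furnishes a uniform $Y^{\epsilon'}$ bound for $|\exp(iy\mathcal{L}_c(s))|$ along the contour for those $c$ whose $L(s,\chi_c)$ is zero-free in the rectangle, while Lemma \ref{lem:zer-density} controls the density of the exceptional $c$ with zeros in the rectangle. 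For these exceptional $c$, a Cauchy--Schwarz argument combined with the large sieve Lemma \ref{HB} shows that their total contribution is $o(Y)$. Absolute convergence of the target Dirichlet series at $\sigma$ itself follows from \eqref{eqn:lambda-bound} combined with the factor $\norm(\fa^3\fb^3\fm^2)^{-\sigma}$, which ensures convergence throughout $\sigma>1/2$.
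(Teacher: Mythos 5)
Your outline assembles the right ingredients (Mellin inversion with a $\Gamma$-kernel cutoff, the zero density estimate, the Luo-type count for coprime $c$, the Heath--Brown--Patterson bound, the large sieve, Lemma~\ref{lem:bound-exp}), but it misassigns their roles, and one step would not go through as described.

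The most serious gap is in the evaluation of the non-cube (off-diagonal) contribution once you are at $\sigma=\frac12+\epsilon_0\le 1$. You argue that Lemma~\ref{H-P} gives $\sum_c\chi_c(\fa\fb^2)\exp(-\norm(c)/Y)\ll\norm(\fa\fb^2)^{1/2+\epsilon}$ and that this suffices ``when $\sigma_1$ is sufficiently large.'' That is fine far to the right of the critical strip, but after the contour shift you must sum a \emph{truncated} Dirichlet series (truncation length $X$ a positive power of $Y$) at exponent $\sigma$ itself, and there the Polya--Vinogradov-type bound alone produces a contribution of size $BX^{2-\sigma+O(\epsilon)}$ which is far too large. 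The paper must split the non-cube sum into two ranges according to the size of the square divisor $d\mid c$: a piece $R$ with $\norm(d)\le B$ handled by Lemma~\ref{H-P}, and a piece $S$ with $\norm(d)>B$ handled by Cauchy--Schwarz together with the cubic large sieve of Lemma~\ref{HB}, then optimize in $B$. This $R$/$S$ decomposition is the technical heart of the proposition and is absent from your plan; ``transporting'' the asymptotic from $\sigma_1>1$ does not remove the need for it, since after Mellin inversion you are re-summing the truncated series at $\sigma$, not reusing the $\sigma_1$-limit as an input.

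Two smaller misattributions. First, the large sieve is \emph{not} used for the exceptional $c$ with zeros in the rectangle: those are handled by a zero-density count (Lemma~\ref{lem:zer-density} giving $\ll Y^{\delta}$ such $c$ with $\delta<1$) together with the crude pointwise bound $X^{1-\sigma+\epsilon}$ on the truncated Dirichlet series (Lemma~\ref{upper-bound}); no sieve or Cauchy--Schwarz is involved there. Second, the phrase ``relate $\exp(iy\mathcal{L}_c(\sigma))$ to $\exp(iy\mathcal{L}_c(\sigma_1))$'' is misleading: the actual device (Lemma~\ref{rep}) writes $\exp(iy\mathcal{L}_c(\sigma))$ directly as a truncated Dirichlet series plus a contour remainder along $L_{Y,\epsilon,A}$, with the residue at $u=0$ producing the main term; the $\sigma_1>1$ region only appears implicitly as the right edge where the series converges absolutely, and the paper treats $\sigma>1$ separately at the end as a simpler case rather than as an intermediate stepping stone.
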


\subsection{Application of the zero density estimate}\label{sec:zero-density}
 For fixed $A>0$, let $R_{Y, \epsilon, A}$ be the rectangle with  the vertices $1+i(\log{Y})^{2A}$, $(1+\epsilon)/2+i(\log{Y})^{2A}$, $(1+\epsilon)/2-i(\log{Y})^{2A}$, and  $1-i(\log{Y})^{2A}$ (see Figure \ref{R}). 
 \begin{figure}
\begin{center}
\includegraphics[scale=0.8]{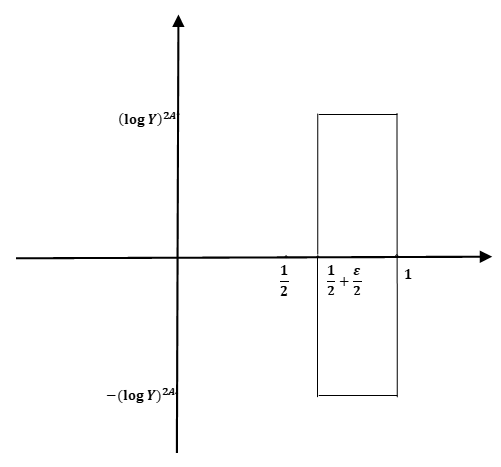}
\end{center}
\caption{The rectangle $R_{Y, \epsilon, A}$}
\label{R}
\end{figure}
We say that an element $c\in \mathcal{C}$ is in $\mathcal{Z}^{\mathrm{c}}$, if $L(s, \chi_c)$ does not have a zero in $R_{Y, \epsilon, A}$. Otherwise, it is in $\mathcal{Z}$. Thus, $\mathcal{C}=\mathcal{Z}\cup \mathcal{Z}^{\mathrm{c}}$. Note that $\mathcal{Z}$ and $\mathcal{Z}^{\mathrm{c}}$ depend on $Y$, $\epsilon$, and $A$.
\begin{lem}\label{lemma1}
For $\epsilon, A>0$, there is $\delta=\delta(\epsilon, A)<1$ such that 
$$\sum_{{c\in \mathcal{Z}}} \exp(-\norm(c)/Y) \ll Y^\delta.$$
\end{lem}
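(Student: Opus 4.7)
My plan is to combine the Blomer--Goldmakher--Louvel zero density estimate (Lemma \ref{lem:zer-density}) with the exponential decay of the weight $\exp(-\norm(c)/Y)$: the zero density will count how many $c\in \mathcal{Z}$ there can be with moderate norm, and the weight will cut off the contribution of those with very large norm.

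First I would truncate the sum at $X := Y(\log Y)^2$. For the tail $\norm(c)>X$, I would use the trivial inclusion $\mathcal{Z}\subset \mathcal{C}$ together with the splitting $\exp(-\norm(c)/Y) \leq \exp(-X/(2Y))\exp(-\norm(c)/(2Y))$ to obtain
\[\sum_{\substack{c\in\mathcal{Z}\\ \norm(c)>X}}\exp(-\norm(c)/Y)\leq \exp\!\left(-\tfrac{X}{2Y}\right)\sum_{c\in\mathcal{C}}\exp(-\norm(c)/(2Y))\ll Y\exp\!\left(-\tfrac{(\log Y)^2}{2}\right),\]
where the last estimate uses Lemma \ref{lem:luo-lemma} with $\mathfrak{a}=\langle1\rangle$ (and $Y$ replaced by $2Y$). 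This tail contribution is negligible compared to any fixed power $Y^\delta$.

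For the main range $\norm(c)\leq X$, I would bound the weighted sum trivially by $\#\{c\in\mathcal{Z}:\norm(c)\leq X\}$. By definition of $\mathcal{Z}$, every such $c$ contributes at least one zero of $L(s,\chi_c)$ with $\beta\geq (1+\epsilon)/2$ and $|\gamma|\leq (\log Y)^{2A}$, hence
\[\#\{c\in\mathcal{Z}:\norm(c)\leq X\}\;\leq\sum_{\substack{c\in\mathcal{C}\\\norm(c)\leq X}}N\!\left(\tfrac{1+\epsilon}{2},(\log Y)^{2A},c\right).\]
Applying Lemma \ref{lem:zer-density} with $\sigma=(1+\epsilon)/2$ and $T=(\log Y)^{2A}$ yields an upper bound of the form $X^{g((1+\epsilon)/2)}\,(\log Y)^{O_A(1)}\,(XT)^{\epsilon'}$ for any $\epsilon'>0$.

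The only genuine point to check is that $g((1+\epsilon)/2)<1$. On the first branch, a direct computation gives $g((1+\epsilon)/2)=\frac{4(1-\epsilon)}{4-3\epsilon}$, which is strictly less than $1$ for every $\epsilon>0$ (since $4-4\epsilon<4-3\epsilon$); an analogous inequality holds on the second branch, because $g$ is continuous at $\sigma=5/6$ and vanishes at $\sigma=1$. With this in hand, substituting $X=Y(\log Y)^2$ and choosing $\epsilon'$ sufficiently small yields $\#\{c\in\mathcal{Z}:\norm(c)\leq X\}\ll Y^{\delta}$ for any $\delta$ satisfying $g((1+\epsilon)/2)<\delta<1$, and combining with the tail bound completes the proof. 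I do not anticipate any serious obstacle beyond verifying the strict inequality $g((1+\epsilon)/2)<1$ and tracking the harmless polylogarithmic factors.
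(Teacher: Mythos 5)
Your proof is correct and reaches the same conclusion from the same key input (the Blomer--Goldmakher--Louvel zero-density estimate, Lemma~\ref{lem:zer-density}), but by a slightly different route. You split the weighted sum at a cutoff $X=Y(\log Y)^2$, bound the tail using the exponential decay of the weight plus Lemma~\ref{lem:luo-lemma}, and bound the main range by the unweighted count and the zero-density estimate. The paper instead encodes the same zero-density input in the statement that the Dirichlet series $\sum_{c\in\mathcal{Z}}\norm(c)^{-s}$ converges for $\Re(s)>\eta$ with $\eta=\eta(\epsilon,A)<1$, and then recovers the smoothly weighted sum via the Mellin/Cahen integral $\exp(-x)=\tfrac{1}{2\pi i}\int_{(a)}\Gamma(s)x^{-s}\,ds$, shifting the contour to $\Re(s)=\eta+\epsilon'$. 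Your truncation argument is more elementary (no contour integration) and makes explicit the polylogarithmic bookkeeping; the paper's argument is more compact but leaves implicit that the zero-density bound must be applied at all cutoffs $Z$ (not just $Z=Y$) to justify the convergence of the Dirichlet series. One small point: your explicit check that $g((1+\epsilon)/2)=\tfrac{4(1-\epsilon)}{4-3\epsilon}<1$ on the first branch is welcome detail that the paper omits; on the second branch the remark ``continuous at $5/6$ and vanishes at $1$'' doesn't by itself rule out $g\geq 1$ somewhere in between, though the inequality is in fact true (the relevant difference of numerator and denominator factors as $(4\sigma-3)(2\sigma-1)>0$ on $(5/6,1]$), and in any case the lemma is only needed for small $\epsilon$, where the first branch applies.
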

\begin{proof}
Let $N\left(\frac{1+\epsilon}{2}, (\log{Y})^{2A}, c\right)$ be the number of zeros of $L(s, \chi_c)$ in the rectangle $R_{Y, \epsilon, A}$. Then from Lemma \ref{lem:zer-density}
 we have
$$\sum_{\substack{{\norm(c)\leq Y}\\{{c\in \mathcal{Z}}}}} 1\leq  \sum_{\substack{{\norm(c)\leq Y}\\{c\in \mathcal{Z}}}} N\left(\frac{1+\epsilon}{2}, (\log{Y})^{2A}, c\right) \ll Y^\eta,  $$
for some $\eta=\eta(\epsilon, A)<1$. Hence, the Dirichlet series $\sum_{c\in \mathcal{Z}} 1/\norm(c)^s$ is absolutely convergent  (and thus analytic) for $\Re(s)>\eta$. We have
$$\sum_{c\in \mathcal{Z}} \exp(-\norm(c)/Y)=\frac{1}{2\pi i} \int_{(\eta+\epsilon^\prime)} \left( \sum_{c\in \mathcal{Z}}  \frac{1}{\norm(c)^s}\right) \Gamma(s) Y^s ds=O(Y^{\eta+\epsilon^\prime}).$$
Letting $\delta=\eta+\epsilon^\prime$ for sufficiently small $\epsilon^\prime$ implies the desired result.
\end{proof}
For $\sigma> \frac{1}{2}$ as fixed in Proposition \ref{newprop} and a sufficiently small $\epsilon>0$, we have
\begin{equation}
\label{main}
\sum_{c\in \mathcal{C}}^{\star}\exp\left(iy\mathcal{L}_{c} (\sigma)\right)\exp(-\norm(c)/Y)= \sum_{c\in \mathcal{Z}^{\mathrm{c}}}^{\star}+ \sum_{c\in \mathcal{Z}}^{\star}=\sum_{c\in \mathcal{Z}^{\mathrm{c}}}+ \sum_{c\in \mathcal{Z}}^{\star}.
\end{equation}
Observe that $\mathcal{L}_{c} (\sigma)$ is real, and so $|\exp\left(iy\mathcal{L}_{c} (\sigma)\right)|=1$. Thus, by Lemma \ref{lemma1}, we have
$$\sum_{c\in \mathcal{Z}}^{\star}\exp\left(iy\mathcal{L}_{c} (\sigma)\right)\exp(-\norm(c)/Y) \ll \sum_{c\in \mathcal{Z}}^{\star}\exp(-\norm(c)/Y)\ll Y^\delta.$$
Applying this estimate in \eqref{main} yields
\begin{equation}
\label{main1}
\sum_{c\in \mathcal{C}}^{\star}\exp\left(iy\mathcal{L}_{c} (\sigma)\right)\exp(-\norm(c)/Y)= \sum_{c\in \mathcal{Z}^{\mathrm{c}}} \exp\left(iy\mathcal{L}_{c} (\sigma)\right)\exp(-\norm(c)/Y)+ O(Y^\delta).
\end{equation}

For $c\in {\mathcal{Z}}^{\mathrm{c}}$, $y\in\mathbb{R}$, and $1/2<\sigma\leq1$, the next lemma gives a representation of $\exp\left(iy\mathcal{L}_{c} (\sigma)\right)$ as a sum of an infinite sum and a certain contour integral.
\begin{lem}
\label{rep}
 Let $\epsilon>0$ be given such that $\sigma\geq \frac{1}{2}+\epsilon$. Assume further that $\sigma\leq1$.
Then for $c\in \mathcal{Z}^{\mathrm{c}}$, we have
$$\exp\left(iy\mathcal{L}_{c} (\sigma)\right)= \sum_{\fa,\fb\subset\mathfrak{O}_{k}}\frac{\lambda_{y}(\fa)\lambda_{y}(\fb)\chi_{c}(\fa\fb^{2})}{\norm(\fa)^{{\sigma}}\norm(\fb)^{\sigma}}\exp\left(-\frac{\norm(\fa\fb)}{X}\right)-\frac{1}{2\pi i} \int_{L_{Y, \epsilon, A}} \exp\left(iy\mathcal{L}_{c} (\sigma+u)\right)\Gamma(u) X^u du, $$
where $L_{Y, \epsilon, A}=L_1+L_2+L_3+L_4+L_5$ (see Figure \ref{L}). Here $L_1$ is the vertical half-line given by $(1-\sigma+\frac{\epsilon}{2})+it(\log{Y})^A$ for $t\geq 1$, $L_2$ is the horizontal line segment given by $t+i(\log{Y})^A$ for $-\frac{\epsilon}{2} \leq t \leq (1-\sigma+\frac{\epsilon}{2})$, $L_3$ is the vertical line-segment given by  $-\frac{\epsilon}{2}+it(\log{Y})^A$ for $-1\leq t\leq 1$, $L_4$ is the horizontal line segment given by $t-i(\log{Y})^A$ for $-\frac{\epsilon}{2} \leq t \leq (1-\sigma+\frac{\epsilon}{2})$, and $L_5$ is the vertical half-line given by $(1-\sigma+\frac{\epsilon}{2})+it(\log{Y})^A$ for $t\leq -1$.
\end{lem}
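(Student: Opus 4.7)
The plan is a standard Mellin--Perron contour shift. The starting point is the absolutely convergent Dirichlet series representation of $\exp(iy\mathcal{L}_c(s))$ provided by Lemma \ref{lem:exp(iyLc)}, valid for $\Re(s)>1$. Combining it with the Mellin inversion identity
\[
e^{-N/X}=\frac{1}{2\pi i}\int_{(u_0)}\Gamma(u)\left(\frac{X}{N}\right)^{u}du\qquad(u_0>0,\ N,X>0)
\]
will convert the weighted sum in the statement into a contour integral, and then shifting to $L_{Y,\epsilon,A}$ will pick up the residue that gives $\exp(iy\mathcal{L}_c(\sigma))$.

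In more detail, first I choose $u_0>1-\sigma$ (say $u_0=2$), so that $\Re(\sigma+u_0)>1$ and the Dirichlet series for $\exp(iy\mathcal{L}_c(\sigma+u))$ is absolutely convergent on $\Re(u)=u_0$. Using the bound $\lambda_y(\fa)\ll_{\epsilon,R}\norm(\fa)^{\epsilon}$ from \eqref{eqn:lambda-bound} together with rapid decay of $\Gamma(u)$ on vertical lines, I interchange sum and integral to obtain
\[
\frac{1}{2\pi i}\int_{(u_0)}\exp\bigl(iy\mathcal{L}_c(\sigma+u)\bigr)\Gamma(u)X^{u}\,du=\sum_{\fa,\fb\subset\mathfrak{O}_k}\frac{\lambda_y(\fa)\lambda_y(\fb)\chi_c(\fa\fb^{2})}{\norm(\fa\fb)^{\sigma}}\exp\!\left(-\frac{\norm(\fa\fb)}{X}\right).
\]
Next I shift the contour from $(u_0)$ to $L_{Y,\epsilon,A}$. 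The only singularity crossed is the simple pole of $\Gamma(u)$ at $u=0$, whose residue is $\exp(iy\mathcal{L}_c(\sigma))$, noting that $\mathcal{L}_c$ is regular at $\sigma$ since $c\in\mathcal{Z}^{\mathrm{c}}$. Rearranging the resulting identity produces exactly the formula claimed in the lemma.

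The only delicate point, and hence the main obstacle, is justifying the contour shift. Analyticity of $\exp(iy\mathcal{L}_c(\sigma+u))$ in the closed region between $(u_0)$ and $L_{Y,\epsilon,A}$ (minus $u=0$) follows from the hypothesis $c\in\mathcal{Z}^{\mathrm{c}}$, which says $L(s,\chi_c)$ has no zero in $R_{Y,\epsilon,A}$. I shall verify that every piece of the deformed contour lies inside $R_{Y,\epsilon,A}$: on $L_3$ one has $\Re(\sigma+u)=\sigma-\epsilon/2\geq(1+\epsilon)/2$ by virtue of $\sigma\geq\tfrac{1}{2}+\epsilon$, and on the horizontal pieces $L_2,L_4$ one has $|\Im(\sigma+u)|=(\log Y)^{A}\leq(\log Y)^{2A}$, both within $R_{Y,\epsilon,A}$. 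It remains to control the horizontal segments at height $\pm T$ used to close the contour: on any such segment with $1-\sigma+\epsilon/2\leq\Re(u)\leq u_0$ we have $\Re(\sigma+u)\geq 1+\epsilon/2$, where the absolutely convergent Euler product gives a bound on $|\exp(iy\mathcal{L}_c(\sigma+u))|$ that is uniform in $T$; together with the classical estimate $|\Gamma(u)|\ll e^{-\pi|\Im u|/2}$, this produces exponential decay, so the horizontal tails vanish as $T\to\infty$. This completes the justification of the shift and hence of the lemma.
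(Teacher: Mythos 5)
Your argument is correct and essentially matches the paper's: both express the integral of $\exp\left(iy\mathcal{L}_c(\sigma+u)\right)\Gamma(u)X^u$ on a vertical line with $\Re(\sigma+u)>1$ as the weighted Dirichlet sum via Mellin inversion and Fubini, then shift to $L_{Y,\epsilon,A}$ and collect the residue $\exp\left(iy\mathcal{L}_c(\sigma)\right)$ at the simple pole $u=0$, using $c\in\mathcal{Z}^{\mathrm{c}}$ to guarantee analyticity of the integrand in the intervening region. The only cosmetic difference is that the paper starts on the line $\Re(u)=1-\sigma+\tfrac{\epsilon}{2}$, which coincides with the tails $L_1,L_5$ of $L_{Y,\epsilon,A}$, so the horizontal connectors at $\pm T$ that you estimate are not needed there.
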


\begin{figure}
\begin{center}
\includegraphics[scale=0.8]{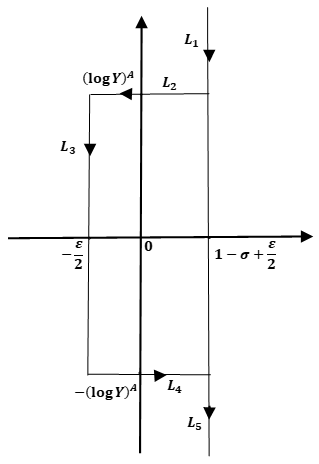}
\end{center}
\caption{The line segments $L_{Y, \epsilon, A}=L_1+L_2+L_3+L_4+L_5$}
\label{L}
\end{figure}

\begin{proof}
Using Lemma \ref{lem:exp(iyLc)} to write $\exp\left(iy\mathcal{L}_{c} (\sigma+u)\right)$, for $\Re(u)>1-\sigma$, as an absolutely convergent Dirichlet series, and applying the identity \begin{equation}\label{eqn:exponential}\exp\left( -\frac{1}{X}\right)=\frac{1}{2\pi i} \int_{(a)} \Gamma(u) X^u du,\end{equation}
for $a>0$, we have 
$$\frac{1}{2\pi i} \int_{(1-\sigma+\frac{\epsilon}{2})} \exp\left(iy\mathcal{L}_{c} (\sigma+u)\right) \Gamma(u) X^u du=  \sum_{\fa,\fb\subset\mathfrak{O}_{k}}\frac{{\lambda}_{y}(\fa)\lambda_{y}(\fb)\chi_{c}(\fa\fb^{2})}{\norm(\fa)^{{\sigma}}\norm(\fb)^{\sigma}}\exp\left(-\frac{\norm(\fa\fb)}{X}\right).$$
Moving the line of integration from $\Re(u)=1-\sigma+\frac{\epsilon}{2}$ to $L_{Y, \epsilon, A}$ and calculating the residue at $u=0$ yield the result. (Note that since $c\in \mathcal{Z}^{\mathrm{c}}$, the integrand has only a simple pole at $u=0$.) 
\end{proof}
Employing the above lemma in \eqref{main1} for $\sigma\leq1$ yields
\begin{equation}
\label{main2}
\sum_{c\in \mathcal{C}}^{\star}\exp\left(iy\mathcal{L}_{c} (\sigma)\right)\exp(-\norm(c)/Y)= (I)-(II)+(III)+ O(Y^\delta),
\end{equation}
where 
\begin{equation}
\label{one}
(I)=\sum _{c\in \mathcal{C}} \left ( \sum_{\fa,\fb\subset\mathfrak{O}_{k}}\frac{{\lambda}_{y}(\fa)\lambda_{y}(\fb)\chi_{c}(\fa\fb^{2})}{\norm(\fa)^{{\sigma}}\norm(\fb)^{\sigma}}\exp\left(-\frac{\norm(\fa\fb)}{X}\right)\right) \exp\left(-\frac{\norm(c)}{Y}\right),
\end{equation}
\begin{equation}
\label{two}
(II)= \sum _{c\in \mathcal{Z}} \left ( \sum_{\fa,\fb\subset\mathfrak{O}_{k}}\frac{{\lambda}_{y}(\fa)\lambda_{y}(\fb)\chi_{c}(\fa\fb^{2})}{\norm(\fa)^{{\sigma}}\norm(\fb)^{\sigma}}\exp\left(-\frac{\norm(\fa\fb)}{X}\right)\right) \exp\left(-\frac{\norm(c)}{Y}\right),
\end{equation}
and
\begin{equation}
\label{three}
(III)= \sum _{c\in \mathcal{Z}^{\mathrm{c}}} \left ( -\frac{1}{2\pi i} \int_{L_{Y, \epsilon, A}} \exp\left(iy\mathcal{L}_{c} (\sigma+u)\right)\Gamma(u) X^u du
\right) \exp\left(-\frac{\norm(c)}{Y}\right).
\end{equation}

\subsection{Evaluation of (I)}\label{sec:mean-value}

We prove an asymptotic formula for the sum $(I)$ given in \eqref{one}.
\begin{lem}\label{lem:luo-calcs}
Let
\[\widetilde{C}_{\sigma}(y)=\frac{3}{4}\frac{\res_{s=1}\zeta_{k}(s)}{\left| H_{\langle9\rangle} \right|\zeta_{k}(2)}\widetilde{M}_{\sigma}(y),\]
where $\widetilde{M}_{\sigma}(y)$ is defined in Proposition \ref{newprop}.
We have
\begin{equation*}
(I)=\widetilde{C}_{\sigma}(y)Y+O(YX^{\epsilon-\epsilon_0}+Y^{\frac{1}{2}+\epsilon}+Y^{\frac12+2\epsilon}X^{1-\epsilon_{0}+3\epsilon}),
\end{equation*}
for a sufficiently small $\epsilon>0$.
\end{lem}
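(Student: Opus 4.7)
The plan is to interchange the order of summation in $(I)$, writing
\[(I) = \sum_{\mathfrak{A}, \mathfrak{B} \subset \mathfrak{O}_k} \frac{\lambda_y(\mathfrak{A})\lambda_y(\mathfrak{B})}{\norm(\mathfrak{A}\mathfrak{B})^\sigma} \exp\!\left(-\frac{\norm(\mathfrak{A}\mathfrak{B})}{X}\right) \mathcal{S}(\mathfrak{A}\mathfrak{B}^2; Y),\]
where $\mathcal{S}(\mathfrak{I};Y) := \sum_{c \in \mathcal{C}} \chi_c(\mathfrak{I}) \exp(-\norm(c)/Y)$. Since $\chi_c(\langle 1-\zeta_3 \rangle) = 1$ from Section \ref{sec:cubic}, the $\langle 1-\zeta_3 \rangle$-components of $\mathfrak{A}, \mathfrak{B}$ factor out cleanly, producing the leading prefactor $\sum_{r_1, r_2 \geq 0} \lambda_y(\langle 1-\zeta_3 \rangle^{r_1}) \lambda_y(\langle 1-\zeta_3 \rangle^{r_2}) \, 3^{-(r_1+r_2)\sigma}$ appearing in $\widetilde{M}_\sigma(y)$. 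Writing $\mathfrak{A} = \langle 1-\zeta_3 \rangle^{r_1} \mathfrak{A}'$ and $\mathfrak{B} = \langle 1-\zeta_3 \rangle^{r_2} \mathfrak{B}'$ with $\mathfrak{A}', \mathfrak{B}'$ coprime to $\langle 3 \rangle$, Proposition \ref{prop:cub-rec} identifies $\chi_c(\mathfrak{A}'\mathfrak{B}'^2) = \left(\frac{c}{a'b'^2}\right)_3$, where $a',b'$ are the primary generators of $\mathfrak{A}',\mathfrak{B}'$. As a character in $c$, this is trivial if and only if $\nu_\fp(\mathfrak{A}') \equiv \nu_\fp(\mathfrak{B}') \pmod{3}$ for every prime $\fp \nmid \langle 3 \rangle$; call such $(\mathfrak{A}',\mathfrak{B}')$ \emph{cube-compatible} and split the sum into Case A (cube-compatible) and Case B (its complement).

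In Case A, $\chi_c(\mathfrak{A}'\mathfrak{B}'^2)$ equals $1$ whenever $\gcd(\langle c \rangle, \mathfrak{A}'\mathfrak{B}') = 1$ and is zero otherwise, so Lemma \ref{lem:luo-lemma} gives
\[\mathcal{S}(\mathfrak{A}'\mathfrak{B}'^2;Y) = C_{\mathfrak{A}'\mathfrak{B}'} Y + O(Y^{1/2+\epsilon}\norm(\mathfrak{A}'\mathfrak{B}')^\epsilon).\]
The cube-compatible pairs are parametrized bijectively by triples $(\fa, \fb, \fm)$ coprime to $\langle 3 \rangle$ with $\gcd(\fa, \fb) = 1$ via $\mathfrak{A}' = \fa^3 \fm$, $\mathfrak{B}' = \fb^3 \fm$ (locally $\nu_\fp(\fm) = \min(\nu_\fp(\mathfrak{A}'), \nu_\fp(\mathfrak{B}'))$). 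Inserting the explicit form $C_{\mathfrak{A}'\mathfrak{B}'} = \tfrac{3}{4}\tfrac{\res_{s=1}\zeta_k(s)}{|H_{\langle 9 \rangle}|\zeta_k(2)} \prod_{\fp | \fa\fb\fm}(1+\norm(\fp)^{-1})^{-1}$ and reassembling with the $\langle 1-\zeta_3 \rangle$-prefactor above recovers $\widetilde{C}_\sigma(y) Y$ times the exponentially truncated Dirichlet series defining $\widetilde{M}_\sigma(y)$. The truncation tail is bounded by $O(YX^{\epsilon-\epsilon_0})$ via a Mellin-Barnes contour shift past the abscissa of absolute convergence $\sigma = 1/2$ of that series, and the accumulated Luo errors, controlled using $|\lambda_y(\fa)| \ll_\epsilon \norm(\fa)^\epsilon$ from \eqref{eqn:lambda-bound}, contribute $O(Y^{1/2+\epsilon})$.

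Case B is the delicate part. Here $c \mapsto \chi_c(\mathfrak{A}'\mathfrak{B}'^2)$ is a non-trivial cubic Hecke character whose conductor divides the product of primes $\fp$ with $\nu_\fp(a') + 2\nu_\fp(b') \not\equiv 0 \pmod{3}$. Lemma \ref{H-P} is stated for sums over all $a \equiv 1 \pmod{\langle 3 \rangle}$, so to pass to the $\mathcal{C}$-restricted sum I would detect the squarefree-ness of $c$ by Möbius inversion $c = d^2 e$ with $\norm(d) \leq Y^{1/2+\epsilon}$, and detect $c \equiv 1 \pmod{\langle 9 \rangle}$ via the character group of $H_{\langle 9 \rangle}$. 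For each fixed $d$ and each resulting twisted ray-class character, Lemma \ref{H-P} yields $O(\norm(\mathfrak{A}'\mathfrak{B}')^{1/2+\epsilon})$; summing trivially over $d$ produces $\mathcal{S}(\mathfrak{A}'\mathfrak{B}'^2;Y) \ll Y^{1/2+\epsilon}\norm(\mathfrak{A}'\mathfrak{B}')^{1/2+\epsilon}$. Weighting this back against $|\lambda_y(\mathfrak{A})\lambda_y(\mathfrak{B})|\exp(-\norm(\mathfrak{A}\mathfrak{B})/X)/\norm(\mathfrak{A}\mathfrak{B})^\sigma$, using $|\lambda_y|\ll\norm^\epsilon$ together with standard counts for pairs of ideals of fixed norm, yields the final contribution $O(Y^{1/2+2\epsilon} X^{1-\epsilon_0 + 3\epsilon})$.

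The main obstacle is Case B: establishing the clean uniform-in-conductor square-root-type bound for the $\mathcal{C}$-restricted character sum, and then balancing the resulting weighted sum over non-compatible $(\mathfrak{A},\mathfrak{B})$ against the exponential cutoff so that the dominant range $\norm(\mathfrak{A}\mathfrak{B})\asymp X$ produces exactly the claimed $X^{1-\epsilon_0+3\epsilon}$ scaling (rather than a worse $X$-power that would destroy the estimate). If a direct trivial summation over $d$ proves too lossy, one can instead exploit the large sieve inequality of Lemma \ref{HB} after a Cauchy-Schwarz step in $c$, which is the line of attack used by Luo and by Xia in the analogous settings. The combinatorics of Case A -- the bijection between cube-compatible pairs and triples $(\fa, \fb, \fm)$, and the agreement of the factor $\prod_{\fp | \fa\fb\fm}(1+\norm(\fp)^{-1})^{-1}$ from $C_{\mathfrak{A}'\mathfrak{B}'}$ with the identical factor in $\widetilde{M}_\sigma(y)$ -- is straightforward once verified locally at each prime.
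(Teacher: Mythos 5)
The overall skeleton of your argument matches the paper's: interchange the order of summation, factor out the $\langle 1-\zeta_3\rangle$-component, split into the ``cube''/cube-compatible case (Case A) and the non-trivial character case (Case B), treat Case A by Lemma \ref{lem:luo-lemma} plus a Mellin--Barnes truncation estimate, and treat Case B by character-sum estimates. Your Case A analysis, including the bijection $\mathfrak{A}'=\fa^3\fm$, $\mathfrak{B}'=\fb^3\fm$ and the identification of the factor $\prod_{\fp\mid\fa\fb\fm}(1+\norm(\fp)^{-1})^{-1}$ with $C_{\fa\fb\fm}$, is correct and essentially identical to the paper's.

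The gap is in Case B. You assert that detecting squarefree $c=d^2e$ by M\"obius, applying Lemma \ref{H-P} for each fixed $d$, and summing trivially over $\norm(d)\leq Y^{1/2+\epsilon}$ gives $\mathcal{S}(\mathfrak{A}'\mathfrak{B}'^2;Y)\ll Y^{1/2+\epsilon}\norm(\mathfrak{A}'\mathfrak{B}'^2)^{1/2+\epsilon}$, and then claim this yields the stated error $O(Y^{1/2+2\epsilon}X^{1-\epsilon_0+3\epsilon})$. That last step does not hold. Weighting $Y^{1/2+\epsilon}\norm(\fa)^{1/2+\epsilon}\norm(\fb)^{1+2\epsilon}$ against $\norm(\fa\fb)^{2\epsilon-\sigma}\exp(-\norm(\fa\fb)/X)$ and using $\sigma=\tfrac12+\epsilon_0$ gives roughly
\[
Y^{1/2+\epsilon}\!\!\!\sum_{\norm(\fa\fb)\lesssim X}\!\!\norm(\fa)^{-\epsilon_0+3\epsilon}\norm(\fb)^{1/2-\epsilon_0+4\epsilon}\ \ll\ Y^{1/2+\epsilon}\,X^{3/2-\epsilon_0+5\epsilon},
\]
which is worse by a factor of roughly $X^{1/2}$ than the claimed $Y^{1/2+2\epsilon}X^{1-\epsilon_0+3\epsilon}$. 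In the paper this gap is exactly why the M\"obius sum is split at an intermediate parameter $B$: the contribution of $\norm(d)\leq B$ is bounded via Lemma \ref{H-P} by $BX^{2-\sigma+4\epsilon}$, and the contribution of $\norm(d)>B$ is bounded by pulling out the square part $a=a_1a_2^2$ (Lemma \ref{HB} requires a square-free outer sum), applying Cauchy--Schwarz in $a_1$, and invoking the large sieve, giving an error that \emph{decreases} in $B$. Optimizing $B=Y^{1/2+\epsilon}X^{-1/2-3\epsilon/2}$ is what produces the stated $X^{1-\epsilon_0+3\epsilon}$. You do acknowledge the large sieve as a fallback and correctly identify it as the Luo--Xia route, but you present the trivial summation over $d$ as sufficient for the precise error term, which it is not; the large sieve and the optimization of $B$ are not optional but essential to match the lemma as stated. (The weaker bound would still suffice for the qualitative conclusion of Proposition \ref{newprop}, since one can take $X=Y^\eta$ with $\eta$ small, but it does not establish the error term claimed in Lemma \ref{lem:luo-calcs}.)
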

\begin{proof}
From \eqref{one} we have
\begin{align*}(I)&=\sum_{\fa,\fb\subset\mathfrak{O}_{k}}\frac{\lambda_{y}(\fa)\lambda_{y}(\fb)}{\norm(\fa\fb)^{\sigma}}\exp\left(-\frac{\norm(\fa\fb)}{X}\right)\sum_{c\in\mathcal{C}}\chi_{c}(\fa\fb^{2})\exp\left(-\frac{\norm(c)}{Y}\right)\\&=\sum_{r_{1},r_{2}\geq0}\frac{\lambda_{y}(\langle1-\zeta_{3}\rangle^{r_{1}})\lambda_{y}(\langle1-\zeta_{3}\rangle^{r_2})}{3^{r_1\sigma+r_2\sigma}}\sum_{\substack{\fa,\fb\subset\mathfrak{O}_{k}\\\gcd(\fa\fb,\langle3\rangle)=1}}\frac{\lambda_{y}(\fa)\lambda_{y}(\fb)}{\norm(\fa\fb)^{\sigma}}\exp\left(-\frac{3^{r_{1}+r_{2}}\norm(\fa\fb)}{X}\right)\\&\hspace{.2in}\times\sum_{c\in\mathcal{C}}\chi_{c}(\fa\fb^{2})\exp\left(-\frac{\norm(c)}{Y}\right).
\end{align*}
Upon factoring out  $\fm=\gcd(\fa,\fb)$ in the above sum and noting that $\chi_c(\fm^3)=1$ for $\gcd(\langle c\rangle,\fm)=1$, we get
\begin{align}
\label{I}
(I)&=\sum_{r_{1},r_{2}\geq0}\frac{\lambda_{y}(\langle1-\zeta_{3}\rangle^{r_1})\lambda_{y}(\langle1-\zeta_{3}\rangle^{r_2})}{3^{r_1\sigma+r_2\sigma}}\sum_{\substack{\fa,\fb,\fm\subset\mathfrak{O}_{k}\\\gcd(\fa\fb\fm,\langle3\rangle)=1\\\gcd(\fa,\fb)=1}}\frac{\lambda_{y}(\fa\fm)\lambda_{y}(\fb\fm)}{\norm(\fa\fb)^{\sigma}\norm(\fm)^{2\sigma}}\exp\left(-\frac{3^{r_{1}+r_{2}}\norm(\fa\fb\fm^2)}{X}\right)\\&
\hspace{.2in}\times\sum_{\substack{c\in\mathcal{C}\\\gcd(\langle c\rangle,\fm)=1}}\chi_{c}(\fa\fb^{2})\exp\left(-\frac{\norm(c)}{Y}\right)\nonumber.\end{align} 

The contribution of cubes to the average expression \eqref{I} is given by 
\begin{align*}&\sum_{r_{1},r_{2}\geq0}\frac{\lambda_{y}(\langle1-\zeta_{3}\rangle^{r_1})\lambda_{y}(\langle1-\zeta_{3}\rangle^{r_2})}{3^{r_1\sigma+r_2\sigma}}\sum_{\substack{\fa,\fb,\fm\subset\mathfrak{O}_{k}\\\gcd(\fa\fb\fm,\langle3\rangle)=1\\\gcd(\fa,\fb)=1}}\frac{\lambda_{y}(\fa^3\fm)\lambda_{y}(\fb^3\fm)}{\norm(\fa\fb)^{3\sigma}\norm(\fm)^{2\sigma}}\exp\left(-\frac{3^{r_{1}+r_{2}}\norm(\fa^3\fb^3\fm^2)}{X}\right)\\&\hspace{.2in}\times\sum_{\substack{c\in\mathcal{C}\\\gcd(\langle c \rangle,\fa\fb\fm)=1}}\exp\left(-\frac{\norm(c)}{Y}\right).\end{align*}
Using Lemma \ref{lem:luo-lemma} and the estimate \eqref{eqn:lambda-bound} for $\lambda_y$ in the above expression, we get that the contribution of cubes to $(I)$ in \eqref{I} is \begin{align*}&Y\sum_{r_{1},r_{2}\geq0}\frac{\lambda_{y}(\langle1-\zeta_{3}\rangle^{r_1})\lambda_{y}(\langle1-\zeta_{3}\rangle^{r_2})}{3^{r_1\sigma+r_2\sigma}}\sum_{\substack{\fa,\fb,\fm\subset\mathfrak{O}_{k}\\\gcd(\fa\fb\fm,\langle3\rangle)=1\\\gcd(\fa,\fb)=1}}\frac{C_{\fa\fb\fm}\lambda_{y}(\fa^3\fm)\lambda_{y}(\fb^3\fm)}{\norm(\fa\fb)^{3\sigma}\norm(\fm)^{2\sigma}}\exp\left(-\frac{3^{r_{1}+r_{2}}\norm(\fa^3\fb^3\fm^2)}{X}\right)\\&\hspace{.2in}+O(Y^{\frac12+\epsilon}),\end{align*}
where $C_{\fa\fb\fm}$ is defined in Lemma \ref{lem:luo-lemma}.
 We can take $\epsilon$ to be sufficiently small so that $0<\epsilon < \epsilon_0$. Upon plugging the identity $$\exp\left(-\frac{3^{r_{1}+r_{2}}\norm(\fa^3\fb^3\fm^2)}{X}\right)=\int_{(1)}\Gamma(u)\left(\frac{3^{r_{1}+r_{2}}\norm(\fa^3\fb^3\fm^2)}{X}\right)^{-u}\;du$$
in the above expression and moving the line of integration to $\Re(u)=\epsilon-\epsilon_0$, we get that the contribution of cubes to $(I)$ is  \begin{align}\label{llast}Y\sum_{r_{1},r_{2}\geq0}\frac{\lambda_{y}(\langle1-\zeta_{3}\rangle^{r_1})\lambda_{y}(\langle1-\zeta_{3}\rangle^{r_2})}{3^{r_1\sigma+r_2\sigma}}\sum_{\substack{\fa,\fb,\fm\subset\mathfrak{O}_{k}\\\gcd(\fa\fb\fm,\langle3\rangle)=1\\\gcd(\fa,\fb)=1}}\frac{C_{\fa\fb\fm}\lambda_{y}(\fa^3\fm)\lambda_{y}(\fb^3\fm)}{\norm(\fa\fb)^{3\sigma}\norm(\fm)^{2\sigma}}+O(YX^{\epsilon-\epsilon_0})+O(Y^{\frac12+\epsilon}).\end{align}
Note that the coefficient of $Y$ in the above formula is the same as $\widetilde{C}_\sigma(y)$ given in the statement of the lemma.

We still need to estimate the contribution of non-cubes to $(I)$. This is accomplished following the methods of \cite{L} and \cite{X}.
In what follows, ${\sum^{\flat}}$ denotes summation over square free elements (modulo units) of $\mathfrak{O}_k$. 
For ideals $\fa$ and $\fb$ with $\gcd(\fa\fb,\langle3\rangle)=1$, we also write $\fa=\langle a\rangle$ and $\fb=\langle b\rangle$ for some uniquely determined elements $a,\;b\in\mathfrak{O}_k$ congruent to $1\imod{\langle3\rangle}$. With this notation in mind, we consider the character sum \begin{align*}
\sum_{c\in \mathcal{C}}\chi_{c}(\fa\fb^2)\exp\left(-\frac{\norm(c)}{Y}\right)&=\sum_{c\equiv1\imod{\langle9\rangle}}\chi_{c}(\fa\fb^2)\exp\left(-\frac{\norm(c)}{Y}\right)\sum_{\substack{d^{2}|c\\d\equiv1\imod{\langle3\rangle}}}\mu(d),
\end{align*}
which is by the cubic reciprocity, Proposition \ref{prop:cub-rec},
\begin{align*}
\\&=\sum_{c\equiv1\imod{\langle9\rangle}}\chi_{ab^2}(c)\exp\left(-\frac{\norm(c)}{Y}\right)\sum_{\substack{d^{2}|c\\d\equiv1\imod{\langle3\rangle}}}\mu(d).
\end{align*}
By choosing an auxiliary parameter $B$, we split the above sum into two parts to get
\begin{align*}\sum_{c\in \mathcal{C}}\chi_{c}(\fa\fb^2)\exp\left(-\frac{\norm(c)}{Y}\right)&=\sum_{\substack{d\equiv1\imod{\langle3\rangle}\\\norm(d)\leq B}}\mu(d)\chi_{ab^2}(d^2)\sum_{c\equiv\overline{d}^2\imod{\langle9\rangle}}\chi_{ab^2}(c)\exp\left(-\frac{\norm(d^2c)}{Y}\right)\\&+\sum_{d_1\equiv1\imod{\langle3\rangle}}\chi_{ab^2}(d_1^2)\sum_{\substack{d|d_1\\d\equiv1\imod{\langle3\rangle}\\\norm(d)>B}}\mu(d)\sum_{c\equiv\overline{d_1}^2\imod{\langle9\rangle}}^{\flat}\chi_{ab^2}(c)\exp\left(-\frac{\norm(d_1^2c)}{Y}\right).
\end{align*}

Let \begin{equation*}
R=\sum_{\substack{d\equiv1\imod{\langle3\rangle}\\\norm(d)\leq B}}\mu(d)\chi_{ab^2}(d^2)\sum_{c\equiv\overline{d}^2\imod{\langle9\rangle}}\chi_{ab^2}(c)\exp\left(-\frac{\norm(d^2c)}{Y}\right)
\end{equation*}
and 
\begin{equation*}
S=\sum_{d_1\equiv1\imod{\langle3\rangle}}\chi_{ab^2}(d_{1}^2)\sum_{\substack{d|d_1\\d\equiv1\imod{\langle3\rangle}\\\norm(d)>B}}\mu(d)\sum^{\flat}_{c\equiv\overline{d_1}^2\imod{\langle9\rangle}}\chi_{ab^2}(c)\exp\left(-\frac{\norm(d_1^2c)}{Y}\right),
\end{equation*} where $\overline{d}$ (respectively $\overline{d_1}$) is the multiplicative inverse of $d$ (respectively $d_1$) modulo $\langle 9 \rangle$. 

We shall estimate $R$ using Lemma \ref{H-P}. We have
\begin{align*}
R&=\sum_{\substack{d\equiv1\imod{\langle3\rangle}\\\norm(d)\leq B}}\mu(d)\chi_{ab^2}(d^2)\sum_{c\equiv\overline{d}^2\imod{\langle9\rangle}}\chi_{ab^2}(c)\exp\left(-\frac{\norm(d^2c)}{Y}\right)\\&=\frac{1}{\left|H_{\langle9\rangle}\right|}\sum_{\substack{d\equiv1\imod{\langle3\rangle}\\\norm(d)\leq B}}\mu(d)\chi_{ab^2}(d^2)\sum_{c\equiv1\imod{\langle3\rangle}}\chi_{ab^2}(c)\sum_{\chi\imod{\langle9\rangle}}\chi(d^2c)\exp\left(-\frac{\norm(d^2c)}{Y}\right)\\&=\frac{1}{\left|H_{\langle9\rangle}\right|}\sum_{\chi\imod{\langle9\rangle}}\sum_{\substack{d\equiv1\imod{\langle3\rangle}\\\norm(d)\leq B}}\mu(d)\chi_{ab^2}\chi(d^2)\sum_{c\equiv1\imod{\langle3\rangle}}\chi_{ab^2}\chi(c)\exp\left(-\frac{\norm(d^2c)}{Y}\right).
\end{align*}
Applying (\ref{eqn:polya}) of Lemma \ref{H-P} yields 
\begin{equation*}
R\ll B\norm(ab^2)^{\frac12+\epsilon}.
\end{equation*}
Therefore, the contribution of $R$ to $(I)$ in \eqref{I} is 
\begin{align*}
&\ll B\sum_{r_{1},r_{2}}\frac{\left|\lambda_{y}(\langle1-\zeta_{3}\rangle^{r_{1}})\right|\left|\lambda_{y}(\langle1-\zeta_{3}\rangle^{r_{2}})\right|}{3^{r_{1}\sigma+r_{2}\sigma}}\sum_{\fa,\fb,\fm}\frac{\norm(\fa\fb^{2})^{\frac{1}{2}+\epsilon}\left|\lambda_{y}(\fa\fm)\right|\left|\lambda_{y}(\fb\fm)\right|}{\norm(\fa)^{\sigma}\norm(\fb)^{\sigma}\norm(m)^{2\sigma}}\exp\left(-\frac{3^{r_{1}+r_{2}}\norm(\fa\fb\fm^{2})}{X}\right)\\&\ll B\sum_{\fa,\fb}\norm(\fa)^{-\sigma+\frac{1}{2}+2\epsilon}\norm(\fb)^{-\sigma+1+3\epsilon}\exp\left(-\frac{\norm(\fa\fb)}{X}\right)\\&= B\sum_{\fa}\norm(\fa)^{-\sigma+\frac12+2\epsilon}\sum_{\fb|\fa}\norm(\fb)^{\frac{1}{2}+\epsilon}\exp\left(-\frac{\norm(\fa)}{X}\right)\end{align*} which is at most 
\begin{align*}&B\sum_{\fa}\norm(\fa)^{-\sigma+1+4\epsilon}\exp\left(-\frac{\norm(\fa)}{X}\right)\ll BX^{2-\sigma+4\epsilon}.
\end{align*}

Now, let us analyze $S$. 
Recall that 
\begin{equation*}
S=\sum_{d_1\equiv1\imod{\langle3\rangle}}\chi_{ab^2}(d_{1}^2)\sum_{\substack{d|d_1\\d\equiv1\imod{\langle3\rangle}\\\norm(d)>B}}\mu(d)\sum^{\flat}_{c\equiv\overline{d_1}^2\imod{\langle9\rangle}}\chi_{ab^2}(c)\exp\left(-\frac{\norm(d_1^2c)}{Y}\right).
\end{equation*}
The contribution of $S$ to $(I)$ in \eqref{I} is 
\begin{align}\label{eqn:contribS}
&\sum_{r_{1},r_{2}\geq 0}\frac{\lambda_{y}(\langle1-\zeta_{3}\rangle^{r_{1}})\lambda_{y}(\langle1-\zeta_{3}\rangle^{r_{2}})}{3^{r_{1}\sigma+r_{2}\sigma}}\sum_{\substack{a\equiv1\imod{\langle3\rangle}\\b\equiv1\imod{\langle3\rangle}\\m\equiv1\imod{\langle3\rangle}\\(\langle a\rangle,\langle b\rangle)=1}}\frac{\lambda_{y}(\langle am\rangle)\lambda_{y}(\langle bm\rangle)}{\norm(a)^{\sigma}\norm(b)^{\sigma}\norm(m)^{2\sigma}}\exp\left(-\frac{3^{r_{1}+r_{2}}\norm(abm^2)}{X}\right)\\&\hspace{1em}\times\sum_{d_1\equiv1\imod{\langle3\rangle}}\chi_{ab^2}(d_{1}^2)\sum_{\substack{d|d_1\\d\equiv1\imod{\langle3\rangle}\\\norm(d)>B}}\mu(d)\sum^{\flat}_{c\equiv\overline{d_1}^2\imod{\langle9\rangle}}\chi_{ab^2}(c)\exp\left(-\frac{\norm(d_1^2c)}{Y}\right).\nonumber
\end{align}
We write $a=a_{1}a_{2}^{2}$, where $a_{1}$ is the square free part of $a$. Then (\ref{eqn:contribS}) becomes
\begin{align}\label{eqn:contribS2}
&\sum_{\substack{r_{1},r_{2}\geq0\\ a_{2},b,d_1,m}}\frac{\lambda_{y}(\langle1-\zeta_{3}\rangle^{r_{1}})\lambda_{y}(\langle1-\zeta_{3}\rangle^{r_{2}})\lambda_{y}(\langle bm\rangle)}{3^{r_{1}\sigma + r_{2}\sigma}\norm(a_{2})^{2\sigma}\norm(b)^{\sigma}\norm(m)^{2\sigma}}\sum_{\substack{d|d_1\\d\equiv1\imod{\langle3\rangle}\\\norm(d)>B}}\mu(d)\\&\hspace{1em}\times\sum^{\flat}_{a_{1}\equiv1\imod{\langle3\rangle}}\frac{\lambda_{y}(\langle a_{1}a_{2}^{2}m\rangle)}{\norm(a_{1})^{\sigma}}\chi_{a_{1}a_{2}^{2}b^{2}}(d_1^2)\exp\left(-\frac{3^{r_{1}+r_{2}}\norm(a_1a_{2}^{2}bm^2)}{X}\right)\sum^{\flat}_{c\equiv\overline{d_1}^{2}\imod{\langle9\rangle}}\chi_{a_{1}a_{2}^{2}b^{2}}(c)\exp\left(-\frac{\norm(cd_1^2)}{Y}\right).\nonumber
\end{align} 
Notice that we may assume that $\norm(a_1)\ll \frac{X^{1+\epsilon}}{\norm(a_2^2b)^{1+\epsilon}}$ to majorize \eqref{eqn:contribS2} by

\begin{align}\label{eqn:contribS2.5}
&\sum_{\substack{r_{1},r_{2}\geq0\\ a_{2},b,d_1,m}}\frac{\lambda_{y}(\langle1-\zeta_{3}\rangle^{r_{1}})\lambda_{y}(\langle1-\zeta_{3}\rangle^{r_{2}})\lambda_{y}(\langle bm\rangle)}{3^{r_{1}\sigma + r_{2}\sigma}\norm(a_{2})^{2\sigma}\norm(b)^{\sigma}\norm(m)^{2\sigma}}\sum_{\substack{d|d_1\\d\equiv1\imod{\langle3\rangle}\\\norm(d)>B}}\mu(d)\\&\hspace{1em}\times\sum^{\flat}_{\substack{a_{1}\equiv1\imod{\langle3\rangle}\\\norm(a_1)\ll\frac{X^{1+\epsilon}}{\norm(a_2^2b)^{1+\epsilon}}}}\frac{\lambda_{y}(\langle a_{1}a_{2}^{2}m\rangle)}{\norm(a_{1})^{\sigma}}\chi_{a_{1}a_{2}^{2}b^{2}}(d_1^2)\exp\left(-\frac{3^{r_{1}+r_{2}}\norm(a_{1}a_{2}^{2}bm^2)}{X}\right)\sum^{\flat}_{c\equiv\overline{d_1}^{2}\imod{\langle9\rangle}}\chi_{a_{1}a_{2}^{2}b^{2}}(c)\exp\left(-\frac{\norm(cd_1^2)}{Y}\right).\nonumber
\end{align} 
Applying Cauchy-Schwarz inequality gives that (\ref{eqn:contribS2.5}) is at most
\begin{align*}
&\sum_{\substack{r_{1},r_{2}\geq0\\ a_{2},b,d_1,m}}\left|\frac{\lambda_{y}(\langle1-\zeta_{3}\rangle^{r_{1}})\lambda_{y}(\langle1-\zeta_{3}\rangle^{r_{2}})\lambda_{y}(\langle bm\rangle)}{3^{r_{1}\sigma + r_{2}\sigma}\norm(a_{2})^{2\sigma}\norm(b)^{\sigma}\norm(m)^{2\sigma}}\sum_{\substack{d|d_1\\d\equiv1\imod{\langle3\rangle}\\\norm(d)>B}}\mu(d)\right|\\&\hspace{1em}\times\left(\sum^{\flat}_{\substack{a_{1}\equiv1\imod{\langle3\rangle}\\\norm(a_1)\ll\frac{X^{1+\epsilon}}{\norm(a_2^2b)^{1+\epsilon}}}}\left|\frac{\lambda_{y}(\langle a_{1}a_{2}^{2}m\rangle)}{\norm(a_{1})^{\sigma}}\right|^2\exp\left(-2\frac{3^{r_{1}+r_{2}}\norm(a_{1}a_{2}^{2}bm^2)}{X}\right)\right)^{\frac12}\nonumber\\&\hspace{1em}\times\left(\sum^{\flat}_{\substack{a_{1}\equiv1\imod{\langle3\rangle}\\\norm(a_1)\ll\frac{X^{1+\epsilon}}{\norm(a_2^2b)^{1+\epsilon}}}}\left|\sum^{\flat}_{c\equiv\overline{d_1}^{2}\imod{\langle9\rangle}}\chi_{a_{2}^{2}b^{2}}(c)\chi_{a_1}(c)\exp\left(-\frac{\norm(cd_1^2)}{Y}\right)\right|^2\right)^{\frac12},\nonumber
\end{align*} 
which is 
\begin{align}\label{eqn:contribS3}
&\ll \sum_{b,d_1,a_{2}}\left(\sum^{\flat}_{a_{1}\equiv1\imod{\langle3\rangle}}\norm(a_1)^{-2\sigma+\epsilon}\norm(a_{2})^{-4\sigma+\epsilon}\norm(b)^{-2\sigma+\epsilon}\norm(d_1)^{\epsilon}
\exp\left(-2\frac{\norm(a_{1}a_{2}^2b)}{X}\right)\right)^{\frac12}\\&\hspace{1em}\times\left(\sum^{\flat}_{\substack{a_{1}\equiv1\imod{\langle3\rangle}\\\norm(a_{1})\ll\frac{X^{1+\epsilon}}{\norm(a_{2}^2b)^{1+\epsilon}}}}\left|\sum^{\flat}_{\substack{c\equiv\overline{d_1}^{2}\imod{\langle9\rangle}\\\norm(c)\ll\frac{Y^{1+\epsilon}}{\norm(d_1^2)^{1+\epsilon}}}}\chi_{a_{1}}(c)\chi_{a_{2^2b^2}}(c)\exp\left(-\frac{\norm(cd_1^2)}{Y}\right)\right|^{2}\right)^{\frac12}.\nonumber
\end{align} 
Invoking the large sieve inequality (\ref{eqn:large-sieve}) of Lemma \ref{HB} in \eqref{eqn:contribS3}, we get that (\ref{eqn:contribS}) is 
\begin{align*}
&\ll\sum_{b,d_1,a_{2}}\left(\sum^{\flat}_{a_{1}\equiv1\imod{\langle3\rangle}}\norm(a_1)^{-2\sigma+\epsilon}\norm(a_{2})^{-4\sigma+\epsilon}\norm(b)^{-2\sigma+\epsilon}\norm(d_1)^{\epsilon}
\exp\left(-2\frac{\norm(a_{1}a_{2}^2b)}{X}\right)\right)^{\frac12}\\&\hspace{1em}\times\left(\frac{X^{1+\epsilon}}{\norm(a_{2}^2b)^{1+\epsilon}}+\frac{Y^{1+\epsilon}}{\norm(d_1^2)^{1+\epsilon}}+\left(\frac{XY}{\norm(a_{2}^2d_{1}^2b)}\right)^{\frac23(1+\epsilon)}\right)^{\frac12}\left(\frac{XY}{\norm(a_{2}^2d_{1}^2b)}\right)^{\frac{\epsilon}{2}(1+\epsilon)}
\left(\sum^{\flat}_{\norm(c)\ll\frac{Y^{1+\epsilon}}{\norm(d_{1}^2)^{1+\epsilon}}}\exp\left(-2\frac{\norm(cd_{1}^2)}{Y}\right)\right)^{\frac12}.
\end{align*}
Therefore, the contribution of $S$ to $(I)$ in \eqref{I} is at most 
\begin{align*}
&\sum_{b,d_{1},a_{2}}\left(\norm(a_{2})^{-4\sigma+\epsilon}\norm(b)^{-2\sigma+\epsilon}\norm(d_1)^{\epsilon}
\exp\left(-2\frac{\norm(b)}{X}\right)\right)^{\frac12}\\&\hspace{1em}\times\left(\frac{X^{\frac{1+\epsilon}2}}{\norm(a_{2})^{1+\epsilon}\norm(b)^{\frac{1+\epsilon}2}}+\frac{Y^{\frac{1+\epsilon}{2}}}{\norm(d_{1})^{1+\epsilon}}+\left(\frac{XY}{\norm(a_{2}^2d_{1}^2b)}\right)^{\frac{(1+\epsilon)}{3}}\right)\left(\frac{XY}{\norm(a_{2}^2d_{1}^2b)}\right)^{\frac{\epsilon}{2}(1+\epsilon)}
\left(\frac{Y}{\norm(d_{1}^2)}\right)^{\frac{1+\epsilon}2}\\&\ll (XY)^{\frac{\epsilon}{2}(1+\epsilon)}\sum_{b,d_{1}}\norm(b)^{-\sigma}\exp\left(-\frac{\norm(b)}{X}\right)\left(\frac{(XY)^{\frac{1+\epsilon}{2}}}{\norm(b)^{\frac12}\norm(d_1)}+\frac{Y^{1+\epsilon}}{\norm(d_1)^2}+\frac{X^{\frac{1+\epsilon}3}Y^{\frac56(1+\epsilon)}}{\norm(d_1)^{\frac53}\norm(b)^{\frac13}}\right).
\end{align*}
Notice that we may assume that  $B<\norm(d_1)<\sqrt{Y}$. Hence,
\begin{equation*}
\sum_{b,d_{1}}\norm(b)^{-\sigma-\frac12}\norm(d_1)^{-1}\exp\left(-\frac{\norm(b)}{X}\right)\ll Y^{\frac{\epsilon}{2}},
\end{equation*}
\begin{equation*}
\sum_{b,d_{1}}\norm(b)^{-\sigma}\norm(d_1)^{-2}\exp\left(-\frac{\norm(b)}{X}\right)\ll \frac{X^{1-\sigma}}{B},
\end{equation*}
and
\begin{equation*}
\sum_{b,d_{1}}\norm(b)^{-\sigma-\frac13}\norm(d_1)^{-\frac53}\exp\left(-\frac{\norm(b)}{X}\right)\ll X^{\frac23-\sigma}B^{-\frac23}.
\end{equation*}
The overall contribution of $S$ is 
\begin{equation*}
\ll (XY)^{\frac12+2\epsilon}+Y^{1+2\epsilon}\frac{X^{1-\sigma+\epsilon}}{B}+Y^{\frac56+\frac{3\epsilon}{2}}\frac{X^{1-\sigma+\epsilon}}{B^{\frac23}}.
\end{equation*}
Therefore, the contribution of $R+S$ to $(I)$ is 
\begin{equation*}
\ll BX^{2-\sigma+4\epsilon}+(XY)^{\frac12+2\epsilon}+Y^{1+2\epsilon}\frac{X^{1-\sigma+\epsilon}}{B}+Y^{\frac56+\frac{3\epsilon}{2}}\frac{X^{1-\sigma+\epsilon}}{B^{\frac23}}.
\end{equation*}
Upon taking $B=Y^{\frac12+\epsilon}X^{-\frac12-\frac{3}{2}\epsilon}$, this contribution becomes 
\begin{equation*}
\ll Y^{\frac12+2\epsilon}X^{\frac32-\sigma+3\epsilon}.
\end{equation*}
By combining this inequality with \eqref{llast}, we have 

\begin{equation}\label{one-estimate}
(I)=\widetilde{C}_{\sigma}(y)Y+O(YX^{\epsilon-\epsilon_0}+Y^{\frac{1}{2}+\epsilon}+Y^{\frac12+2\epsilon}X^{1-\epsilon_{0}+3\epsilon}).
\end{equation}
\end{proof}
\subsection{Evaluation of (II)}
In order to estimate $(II)$ in \eqref{two} for $\sigma\leq1$, we first provide an upper bound for the sum given in Lemma \ref{rep}.
\begin{lem}
\label{upper-bound}
Let $\sigma$ be as in Lemma \ref{rep}. For $\epsilon>0$ such that $\sigma\geq \frac{1}{2}+\epsilon$, we have
$$\sum_{\fa,\fb\subset\mathfrak{O}_{k}}\frac{{\lambda}_{y}(\fa)\lambda_{y}(\fb)\chi_{c}(\fa\fb^{2})}{\norm(\fa)^{{\sigma}}\norm(\fb)^{\sigma}}\exp\left(-\frac{\norm(\fa\fb)}{X}\right)=O\left(X^{1-\sigma+\epsilon} \right).$$
\end{lem}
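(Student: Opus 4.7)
The plan is to avoid the contour-shift argument of Lemma \ref{rep} (which is only available for $c\in\mathcal{Z}^{\mathrm{c}}$) and instead estimate the sum by staying entirely inside the half-plane of absolute convergence $\Re(s)>1$. Uniformity in $c$ is essential here, since Lemma \ref{upper-bound} will be applied in the next subsection to the sum $(II)$ from \eqref{two}, where $c$ ranges over the exceptional set $\mathcal{Z}$ and Lemma \ref{rep} does not apply.

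First I would apply the Mellin-type identity \eqref{eqn:exponential} in the form
\[\exp\left(-\frac{\norm(\fa\fb)}{X}\right)=\frac{1}{2\pi i}\int_{(1-\sigma+\epsilon/2)}\Gamma(u)X^{u}\norm(\fa\fb)^{-u}\,du,\]
which is legitimate because $1-\sigma+\epsilon/2>0$ under the standing hypothesis $\sigma\leq 1$. Inserting this into the sum and swapping summation with integration --- justified by absolute convergence of the Dirichlet series of Lemma \ref{lem:exp(iyLc)} at $\Re(\sigma+u)=1+\epsilon/2>1$ --- together with the identification afforded by Lemma \ref{lem:exp(iyLc)} gives
\[\sum_{\fa,\fb\subset\mathfrak{O}_{k}}\frac{\lambda_{y}(\fa)\lambda_{y}(\fb)\chi_{c}(\fa\fb^{2})}{\norm(\fa)^{\sigma}\norm(\fb)^{\sigma}}\exp\left(-\frac{\norm(\fa\fb)}{X}\right)=\frac{1}{2\pi i}\int_{(1-\sigma+\epsilon/2)}\Gamma(u)X^{u}\exp\left(iy\mathcal{L}_{c}(\sigma+u)\right)du.\]

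The remaining task is a direct estimate of the integrand. On the chosen contour $\Re(\sigma+u)=1+\epsilon/2$, so the absolutely convergent Dirichlet expansions of $\log L_{c}$ in (Case 1) and of $L_{c}'/L_{c}$ in (Case 2) yield $|\mathcal{L}_{c}(\sigma+u)|\ll_{\epsilon}1$, uniformly in $c$ and in $\Im(u)$; hence $|\exp(iy\mathcal{L}_{c}(\sigma+u))|\ll_{y,\epsilon}1$ on the whole line. Combined with the pointwise bound $|X^{u}|=X^{1-\sigma+\epsilon/2}$ and the integrability of $|\Gamma(u)|$ on vertical lines (exponential decay in $|\Im(u)|$), this gives
\[\left|\sum_{\fa,\fb}\frac{\lambda_{y}(\fa)\lambda_{y}(\fb)\chi_{c}(\fa\fb^{2})}{\norm(\fa)^{\sigma}\norm(\fb)^{\sigma}}\exp\left(-\frac{\norm(\fa\fb)}{X}\right)\right|\ll X^{1-\sigma+\epsilon/2}\int_{-\infty}^{\infty}\left|\Gamma\left(\tfrac{1}{2}-\tfrac{\sigma}{2}+\tfrac{\epsilon}{2}+\tfrac{1}{2}-\tfrac{\sigma}{2}+it\right)\right|dt\ll X^{1-\sigma+\epsilon},\]
which is the claim. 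I do not expect any genuine obstacle: because the entire calculation takes place in the region of absolute convergence, neither the zero density theorem (Lemma \ref{lem:zer-density}) nor the subtle bound of Lemma \ref{lem:bound-exp} is needed, and the uniformity in $c$ is automatic from the Euler product. The only minor checks are the positivity of the contour abscissa (which forces $\sigma\leq 1$, as already assumed from Lemma \ref{rep}) and a Fubini interchange controlled by absolute convergence.
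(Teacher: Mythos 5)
Your proof is correct, and it takes a route that is different in detail, though in the same spirit, from the paper's own argument.

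The paper's proof of this lemma does not pass through $\exp\left(iy\mathcal{L}_c\right)$ at all: it first bounds the coefficients pointwise, using $\left|\lambda_y(\fa)\lambda_y(\fb)\chi_c(\fa\fb^2)\right|\ll_{\epsilon,y}\norm(\fa\fb)^{\epsilon/2}$ (the estimate \eqref{eqn:lambda-bound} together with $|\chi_c|\le 1$), then collapses the resulting double sum over $\fa,\fb$ into a single sum over $\mathfrak{d}=\fa\fb$ at the cost of another $\norm(\mathfrak{d})^{\epsilon/2}$ from the divisor count, and finally applies Mellin inversion to the majorant $\sum_{\mathfrak{d}}\norm(\mathfrak{d})^{-\sigma+\epsilon}\exp\left(-\norm(\mathfrak{d})/X\right)$, picking up the pole of $\zeta_k$ at $u=1-\sigma+\epsilon$. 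By the time the contour integral appears, the $c$-dependence has already been discarded. Your proof instead keeps the sum intact, runs the Mellin identity \eqref{eqn:exponential} in the other direction to recover the contour integral $\frac{1}{2\pi i}\int_{(1-\sigma+\epsilon/2)}\Gamma(u)X^u\exp\left(iy\mathcal{L}_c(\sigma+u)\right)du$ --- that is, the first displayed identity in the proof of Lemma \ref{rep}, which as you correctly note requires only absolute convergence and therefore holds for every $c\in\mathcal{C}$, not just $c\in\mathcal{Z}^{\mathrm{c}}$ --- and then bounds the integrand trivially on the line of absolute convergence, where $|\mathcal{L}_c(\sigma+u)|\ll_\epsilon 1$ uniformly in $c$ by comparison with $\log\zeta_k(1+\epsilon/2)$ or $-\zeta_k'/\zeta_k(1+\epsilon/2)$. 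Both arguments are uniform in $c$ for exactly the same structural reason (everything lives in $\Re(s)>1$), and both deliver $O(X^{1-\sigma+\epsilon})$. What your version buys is a cleaner conceptual link with Lemma \ref{rep}: the same contour integral is shifted past $u=0$ when $L(s,\chi_c)$ is zero-free and simply estimated in place otherwise. What the paper's version buys is that it never needs the series-to-$\exp\left(iy\mathcal{L}_c\right)$ identification from Lemma \ref{lem:exp(iyLc)} nor any discussion of $\Gamma$'s poles; it is a one-line majorization plus a standard Perron-type estimate. Either proof is acceptable.
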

\begin{proof}
Given $\epsilon>0$, we have
\begin{eqnarray*}
\sum_{\fa,\fb\subset\mathfrak{O}_{k}}\frac{{\lambda}_{y}(\fa)\lambda_{y}(\fb)\chi_{c}(\fa\fb^{2})}{\norm(\fa)^{{\sigma}}\norm(\fb)^{\sigma}}\exp\left(-\frac{\norm(\fa\fb)}{X}\right) &\ll& \sum_{\fa,\fb\subset\mathfrak{O}_{k}} \frac{1}{\norm(\fa\fb)^{\sigma-\frac{\epsilon}{2}}} \exp\left(-\frac{\norm(\fa\fb)}{X}\right) \\
&\ll& \sum_{\mathfrak{d}\subset\mathfrak{O}_k} \frac{1}{\norm(\mathfrak{d})^{\sigma-\epsilon}} \exp\left(-\frac{\norm(\mathfrak{d})}{X}\right)\\
&=&
\frac{1}{2\pi i} \int_{(c)} \zeta_k(u+\sigma-\epsilon) \Gamma(u) X^{u} du,\end{eqnarray*}
where $c+\sigma-\epsilon>1$. Moving the line of integration to the left of $1-\sigma+\epsilon$ yields the result.
\end{proof}
Now by using Lemmas \ref{lemma1} and \ref{upper-bound} in \eqref{two}, we have
\begin{equation}
\label{two-estimate}
(II)\ll Y^\delta X^{1-\sigma+\epsilon} .
\end{equation}

\subsection{Evaluation of (III)}\label{sec:eval-of-iii}
Note that $L_{Y, \epsilon, A}$ is the curve consisting of five line segments $L_1$, $L_2$, $L_3$, $L_4$, and $L_5$ defined in Lemma \ref{rep}. We need a version of Stirling's formula that states
$$|\Gamma(\alpha+it)|=O\left( \exp\left(- \frac{\pi}{2} |t| \right) |t|^{\alpha-\frac{1}{2} }\right),$$ 
where the constant is absolute for $\alpha$ in a closed bounded interval (see \cite[p. 176, Corollary 0.13]{T}).

Applying Lemma \ref{lem:bound-exp} with $\alpha_2=1-\epsilon$, $\lambda=\frac{1}{2}-\frac32\epsilon$ for sufficiently small $\epsilon$, and for $2A$ instead of $A$, we get \[\exp\left(iy\mathcal{L}_{c} (\sigma+u)\right)\ll_{\epsilon',\epsilon}Y^{\epsilon'}\] for $c\in \mathcal{Z}^{\mathrm{c}}$, $\norm(c)\leq Y$, $\Re(u)\geq -\frac{\epsilon}{2}$, and $|\Im(u)|\leq (\log Y)^{2A}-2$.
With these bounds in mind, we obtain upper bounds for the integral in \eqref{three} for $\sigma\leq1$ as follows. Observe that 
\begin{eqnarray*}
\label{L3}
\frac{1}{2\pi i} \int_{L_3} \exp\left(iy\mathcal{L}_{c} (\sigma+u)\right) \Gamma(u) X^u du&\ll& X^{-\frac{\epsilon}{2}}Y^{\epsilon'}+X^{-\frac{\epsilon}{2}}Y^{\epsilon'} \int_{1}^{(\log{Y})^A} |\Gamma(-\frac{\epsilon}{2}+it)| dt \nonumber\\
&\ll& X^{-\frac{\epsilon}{2}}Y^{\epsilon'}.
\end{eqnarray*}
By applying Stirling's formula, we get
 \begin{eqnarray*}
 \label{L1}
\frac{1}{2\pi i} \int_{L_1} \exp\left(iy\mathcal{L}_{c} (\sigma+u)\right) \Gamma(u) X^u du&\ll& X^{1-\sigma+\frac{\epsilon}{2}} \int_{(\log{Y})^A}^{\infty} \exp\left( -\frac{\pi}{2}  t\right) t^{\frac{1}{2}-\sigma+\frac{\epsilon}{2}} dt\nonumber\\
&\ll& X^{1-\sigma+\frac{\epsilon}{2}} \exp\left(- \frac{\pi}{4}  (\log{Y})^A\right) (\log{Y})^{A(\frac{1}{2}-\sigma+\frac{\epsilon}{2})} 
 \end{eqnarray*}
and
\begin{eqnarray*}
\label{L_2}
\frac{1}{2\pi i} \int_{L_2} \exp\left(iy\mathcal{L}_{c} (\sigma+u)\right) \Gamma(u) X^u du&\ll& X^{1-\sigma+\frac{\epsilon}{2}} Y^{\epsilon'}\exp\left(- \frac{\pi}{2}  (\log{Y})^A\right) (\log{Y})^{A(\frac{1}{2}-\sigma+\frac{\epsilon}{2})} . \end{eqnarray*}
A bound similar to $L_2$ holds for $L_4$, and a bound similar to $L_1$ holds for $L_5$.
Letting $A>1$ and $X=Y^\eta$ for $\eta>0$ gives
$$\frac{1}{2\pi i} \int_{L_{Y, \epsilon, A}} \exp\left(iy\mathcal{L}_{c} (\sigma+u)\right) \Gamma(u) X^u du \ll  Y^{\epsilon'}X^{-\frac{\epsilon}{2}}.$$
Summing this estimate over $c\in \mathcal{Z}^{\mathrm{c}}$ implies that
\begin{equation}
\label{three-estimate}
(III)\ll Y^{1+\epsilon'} X^{-\frac{\epsilon}{2}}.
\end{equation}

\subsection{Proof of Proposition \ref{newprop}}
\begin{proof}
If $\sigma\leq1$, inserting \eqref{one-estimate}, \eqref{two-estimate}, and \eqref{three-estimate} in \eqref{main2} yields

\begin{align}\label{eqn:finally}
\sum_{c\in \mathcal{C}}^{\star}\exp\left(iy\mathcal{L}_{c} (\sigma)\right)\exp(-\norm(c)/Y)&=\widetilde{C}_{\sigma}(y) Y+O\left(YX^{\epsilon-\epsilon_0}+Y^{\frac{1}{2}+\epsilon}+Y^{\frac12+2\epsilon}X^{1-\epsilon_{0}+3\epsilon}\right) +O\left( Y^\delta X^{\frac12-\epsilon_0+\epsilon}  \right)\\&\hspace{1em}+O\left( Y^{1+\epsilon'} X^{-\frac{\epsilon}{2}}\right)\nonumber.
\end{align}
Now choosing $X=Y^\eta$ for a sufficiently small positive constant $\eta$ controls the contribution of the first and the second error terms in \eqref{eqn:finally} and choosing $0<\epsilon'<\frac{\epsilon\eta}{2}$ controls the contribution of the the third error term in \eqref{eqn:finally}, so that  \[
\sum_{c\in \mathcal{C}}^{\star}\exp\left(iy\mathcal{L}_{c} (\sigma)\right)\exp(-\norm(c)/Y)=\widetilde{C}_{\sigma}(y) Y+o(Y).\]Finally, employing \eqref{N*} yields the result.

If $\sigma>1$, then estimates \eqref{two-estimate} and \eqref{three-estimate} are not needed. In this case we apply Lemma \ref{lem:exp(iyLc)} to write $\exp\left(iy\mathcal{L}_{c} (\sigma+u)\right)$ as an absolutely convergent Dirichlet series (for $u>0$), and we use identity \eqref{eqn:exponential}.  As a result, we get
$$\frac{1}{2\pi i} \int_{(\epsilon)} \exp\left(iy\mathcal{L}_{c} (\sigma+u)\right) \Gamma(u) X^u du=  \sum_{\fa,\fb\subset\mathfrak{O}_{k}}\frac{{\lambda}_{y}(\fa)\lambda_{y}(\fb)\chi_{c}(\fa\fb^{2})}{\norm(\fa)^{{\sigma}}\norm(\fb)^{\sigma}}\exp\left(-\frac{\norm(\fa\fb)}{X}\right),$$ where we choose $\epsilon$ such that $0<\epsilon<\sigma-1$. 
Moving the line of integration from $\Re(u)=\epsilon$ to $\Re(u)=-\epsilon$ and calculating the residue at $u=0$ yield  \begin{align*}
 \sum_{c\in \mathcal{C}}^{\star}\exp\left(iy\mathcal{L}_{c} (\sigma)\right)\exp(-\norm(c)/Y)&=\sum_{c\in \mathcal{C}}\exp\left(iy\mathcal{L}_{c} (\sigma)\right)\exp(-\norm(c)/Y)\\&=\sum _{c\in \mathcal{C}} \left ( \sum_{\fa,\fb\subset\mathfrak{O}_{k}}\frac{{\lambda}_{y}(\fa)\lambda_{y}(\fb)\chi_{c}(\fa\fb^{2})}{\norm(\fa)^{{\sigma}}\norm(\fb)^{\sigma}}\exp\left(-\frac{\norm(\fa\fb)}{X}\right)\right) \exp\left(-\frac{\norm(c)}{Y}\right)\\&\hspace{1em}- \sum _{c\in \mathcal{C}} \left ( \frac{1}{2\pi i} \int_{(-\epsilon)} \exp\left(iy\mathcal{L}_{c} (\sigma+u)\right)\Gamma(u) X^u du
\right) \exp\left(-\frac{\norm(c)}{Y}\right).
\end{align*}
 The result will then follow from \eqref{one-estimate}, Stirling's formula, and choice of $X$ as a positive power of $Y$.
\end{proof}

In what follows, we derive a product formula for $\widetilde{M}_{\sigma}(y)$ which is the absolutely convergent series \[\sum_{r_{1},r_{2}\geq0}\frac{\lambda_{y}(\langle1-\zeta_{3}\rangle^{r_1})\lambda_{y}(\langle1-\zeta_{3}\rangle^{r_2})}{3^{(r_1+r_{2})\sigma}}\sum_{\substack{\fa,\fb,\fm\subset\mathfrak{O}_{k}\\\gcd(\fa\fb\fm,\langle3\rangle)=1\\\gcd(\fa,\fb)=1}}\frac{\lambda_{y}(\fa^3\fm)\lambda_{y}(\fb^3\fm)}{\norm(\fa^3\fb^3\fm^2)^{\sigma}\displaystyle{\prod_{\substack{\fp|\fa\fb\fm\\\fp\; \text{prime}}}\left(1+\norm(\fp)^{-1}\right)}}\]
given in Proposition \ref{newprop}.

\subsection{The product formula for
$\widetilde{M}_{\sigma}(y)$}
\label{sec:prod}

\begin{prop}\label{lem:euler}
In (Case 1) we have 
\begin{align*}
\widetilde{M}_{\sigma}(y)&=\exp\left(-2iy\log(1-3^{-\sigma})\right)\nonumber\\\hspace{0.1em}&\times\prod_{\fp\nmid\langle3\rangle} \left( \frac{1}{\norm(\fp)+1}+\frac{1}{3}\left(\frac{\norm(\fp)}{\norm(\fp)+1}\right)\sum_{j=0}^{2}\exp\left(-2iy\log\left|1-\frac{\zeta_{3}^{j}}{\norm(\fp)^{\sigma}}\right|\right)\right).\end{align*}
In (Case 2) we have 
\begin{align*}
\widetilde{M}_{\sigma}(y)&=\exp\left(-2iy\frac{\log 3}{3^{\sigma}-1}\right)\nonumber\\\hspace{0.1em}&\times\prod_{\fp\nmid\langle3\rangle} \left( \frac{1}{\norm(\fp)+1}+\frac{1}{3}\left(\frac{\norm(\fp)}{\norm(\fp)+1}\right)\sum_{j=0}^{2}\exp\left(-2iy\Re\left(\frac{\zeta_{3}^{j} \log\norm(\fp)}{\norm(\fp)^{\sigma}-\zeta_{3}^{j}}\right)\right)\right).
\end{align*}

\end{prop}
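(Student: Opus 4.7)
\emph{Proof proposal.} The absolute convergence of the Dirichlet series defining $\widetilde{M}_\sigma(y)$, established in Proposition \ref{newprop}, justifies all the rearrangements below. The plan is to factor out the contribution of the ramified prime $\fp_0=\langle 1-\zeta_3\rangle$ and then express the remaining sum as an Euler product over primes $\fp\nmid\langle 3\rangle$.

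The $(r_1,r_2)$ double sum factors as the square of $\sum_{r\geq 0}\lambda_y(\fp_0^r)/3^{r\sigma}$. In (Case 1), identity \eqref{H} applied at $t=3^{-\sigma}$ yields $\sum_r H_r(iy)3^{-r\sigma}=(1-3^{-\sigma})^{-iy}=\exp(-iy\log(1-3^{-\sigma}))$, whose square is the prefactor $\exp(-2iy\log(1-3^{-\sigma}))$. In (Case 2), identity \eqref{G} applied at $u=-iy\log 3$ and $t=3^{-\sigma}$ produces $\exp(-iy\log 3/(3^\sigma-1))$, whose square is the claimed prefactor.

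For each $\fp\nmid\langle 3\rangle$, set $\alpha=\nu_\fp(\fa)$, $\beta=\nu_\fp(\fb)$, $\mu=\nu_\fp(\fm)$; the coprimality $\gcd(\fa,\fb)=1$ localizes to $\min(\alpha,\beta)=0$, while the factor $\prod_{\fp\mid\fa\fb\fm}(1+\norm(\fp)^{-1})$ splits prime-by-prime. Multiplicativity then converts the triple sum into an Euler product $\prod_{\fp\nmid\langle 3\rangle}L_\fp(y)$. The assignment $(\alpha,\beta,\mu)\mapsto(n,m):=(3\alpha+\mu,\,3\beta+\mu)$ is a bijection from triples with $\min(\alpha,\beta)=0$ onto pairs $(n,m)\in\ZZ_{\geq 0}^{2}$ with $n\equiv m\imod{3}$, the inverse being $\mu=\min(n,m)$. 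Isolating the $(0,0,0)$ term (which carries trivial weight $1$) from the remaining ones (each carrying the weight $(1+\norm(\fp)^{-1})^{-1}=\norm(\fp)/(\norm(\fp)+1)$) gives
\[ L_\fp(y)=\frac{1}{\norm(\fp)+1}+\frac{\norm(\fp)}{\norm(\fp)+1}\,T_\fp(y),\qquad T_\fp(y)=\sum_{\substack{n,m\geq 0\\ n\equiv m\imod{3}}}\frac{\lambda_y(\fp^n)\lambda_y(\fp^m)}{\norm(\fp)^{(n+m)\sigma}}. \]

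Finally, the orthogonality $\frac{1}{3}\sum_{j=0}^{2}\zeta_3^{j(n-m)}$ equals $1$ when $n\equiv m\imod{3}$ and $0$ otherwise, which separates $T_\fp(y)=\frac{1}{3}\sum_{j=0}^{2}A_j A_{-j}$ with $A_j=\sum_{n\geq 0}\lambda_y(\fp^n)(\zeta_3^j/\norm(\fp)^\sigma)^n$. Applying \eqref{H} or \eqref{G} at $t=\zeta_3^j/\norm(\fp)^\sigma$ (where $|t|<1$) gives $A_j=(1-\zeta_3^j/\norm(\fp)^\sigma)^{-iy}$ in (Case 1) and $A_j=\exp(-iy\log\norm(\fp)\cdot\zeta_3^j/(\norm(\fp)^\sigma-\zeta_3^j))$ in (Case 2). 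Since $\zeta_3^{-j}=\overline{\zeta_3^{j}}$ and $\norm(\fp)^\sigma\in\mathbb{R}$, the product $A_j A_{-j}$ collapses respectively to $\exp(-2iy\log|1-\zeta_3^j/\norm(\fp)^\sigma|)$ in (Case 1) and $\exp(-2iy\Re(\zeta_3^j\log\norm(\fp)/(\norm(\fp)^\sigma-\zeta_3^j)))$ in (Case 2), producing the stated Euler factors. The main bookkeeping subtlety is tracking the conditional weight $(1+\norm(\fp)^{-1})$ together with the constraint $\gcd(\fa,\fb)=1$ when assembling $L_\fp(y)$; once the bijection with pairs satisfying $n\equiv m\imod{3}$ is set up, the remainder reduces to orthogonality of cubic characters modulo $3$ combined with the power series identities \eqref{G} and \eqref{H}.
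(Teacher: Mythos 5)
Your proof is correct, and it takes a genuinely cleaner route than the paper's. The paper reaches the local factor $\widetilde{M}_{\sigma,\fp}(y)$ by a sequence of nested Euler-product manipulations: it first writes the $\fb$-sum over ideals as an Euler product, substitutes that back, then isolates the cube-free $\fm$-sum and Euler-factors that, substituting again, and finally assembles the local factor — a fairly long chain of bookkeeping (see the displays \eqref{N1}--\eqref{N5} in the paper). You instead localize at once by reading off the triple sum prime-by-prime: the observation that $(\alpha,\beta,\mu)\mapsto(n,m)=(3\alpha+\mu,3\beta+\mu)$ bijects triples with $\min(\alpha,\beta)=0$ onto pairs with $n\equiv m\imod 3$ (with inverse $\mu=\min(n,m)$, since $\min(\alpha,\beta)=0$ forces $\mu$ to be the smaller of $n,m$) collapses the whole structure into $T_\fp(y)$ in one step, and isolating the $(0,0,0)$ term immediately produces the two-term shape $\frac{1}{\norm(\fp)+1}+\frac{\norm(\fp)}{\norm(\fp)+1}T_\fp(y)$. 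For the final simplification the paper expands $T_\fp$ using the generating-function multisection (extracting residues $i\imod 3$ with $\sum_{i}\zeta_3^{-i(n+m)}$), while you apply the orthogonality $\frac13\sum_j\zeta_3^{j(n-m)}$ directly to the congruence constraint on $(n,m)$; these are two faces of the same cubic-root-of-unity orthogonality, and your placement of it is more economical. The treatment of the prefactor and the conjugate-pairing $A_jA_{-j}$ via $\zeta_3^{-j}=\overline{\zeta_3^{j}}$ agrees with the paper. What your route buys is brevity and transparency of the local structure; what the paper's route buys is that each intermediate object ($\widetilde{P}_\sigma$, the $\fb$-Euler product, etc.) is written out explicitly, which may be preferable for a reader uncomfortable trusting a single combinatorial bijection with a conditional weight attached.
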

\begin{proof}
First notice that in (Case 1) by employing \eqref{H} we have \begin{align*}\sum_{r_{1},r_{2}\geq0}\frac{\lambda_{y}(\langle1-\zeta_{3}\rangle^{r_1})\lambda_{y}(\langle1-\zeta_{3}\rangle^{r_2})}{3^{(r_1+r_{2})\sigma}}&=\sum_{r_{1}\geq0}\frac{\lambda_{y}(\langle1-\zeta_{3}\rangle^{r_1})}{3^{r_1\sigma}}\sum_{r_{2}\geq0}\frac{\lambda_{y}(\langle1-\zeta_{3}\rangle^{r_2})}{3^{r_2\sigma}}\\&=\sum_{r_{1}\geq0}\frac{H_{r_1}(iy)}{3^{r_1\sigma}}\sum_{r_{2}\geq0}\frac{H_{r_2}(iy)}{3^{r_2\sigma}}\\&=\exp\left(-2iy\log(1-3^{-\sigma})\right).\end{align*}
Similarly, in (Case 2) by employing \eqref{G} we get \begin{align*}\sum_{r_{1},r_{2}\geq0}\frac{\lambda_{y}(\langle1-\zeta_{3}\rangle^{r_1})\lambda_{y}(\langle1-\zeta_{3}\rangle^{r_2})}{3^{(r_1+r_{2})\sigma}}&=\exp\left(-2iy\frac{\log 3}{3^{\sigma}-1}\right).\end{align*}
In what follows, $\fp$ denotes a prime ideal that does not divide $\langle3\rangle=3\ringO_{K}$. This specification will be dropped from our notation for simplicity. First we set
\begin{align*}
\widetilde{N}_{\sigma}(y)&:=\sum_{\substack{\fa,\fb,\fm\subset\mathfrak{O}_{k}\\\gcd(\fa\fb\fm,\langle3\rangle)=1\\\gcd(\fa,\fb)=1}}\frac{\lambda_{y}(\fa^3\fm)\lambda_{y}(\fb^3\fm)}{\norm(\fa\fb)^{3\sigma}\norm(\fm)^{2\sigma}}\displaystyle{\prod_{\substack{\fp|\fa\fb\fm\\\fp\; \text{prime}}}\left(1+\norm(\fp)^{-1}\right)^{-1}}.
\end{align*}
Notice that 
\begin{align}
\label{N1}
\widetilde{N}_{\sigma}(y)&=\sum_{\fm\text{ cubefree}}\frac{1}{\norm(\fm)^{2\sigma}}\prod_{\fp|\fm}(1+\norm(\fp)^{-1})^{-1}\\&\hspace{3em}\times\sum_{\fa}\frac{\lambda_{y}(\fa^{3}\fm)}{\norm(\fa)^{3\sigma}}\prod_{\substack{\fp|\fa\\\fp\nmid\fm}}(1+\norm(\fp)^{-1})^{-1}\sum_{\fb}\frac{\lambda_{y}(\fb^{3}\fm)}{\norm(\fb)^{3\sigma}}\prod_{\substack{\fp|\fb\\\fp\nmid\fa\fm}}(1+\norm(\fp)^{-1})^{-1}.\nonumber
\end{align}

Our goal here is to find an Euler product for $\widetilde{N}_{\sigma}(y)$ which is given as the above triple sum over ideals $\fm$, $\fa$, and $\fb$. We start by finding an Euler product for 
the innermost sum in \eqref{N1}. Denoting the multiplicity of a prime ideal $\fp$ in an ideal $\fm$ by $\nu_\fp(\fm)$, we have
\begin{align}
\label{N2}
\sum_{\fb}\frac{\lambda_{y}(\fb^{3}\fm)}{\norm(\fb)^{3\sigma}}\prod_{\substack{\fp|\fb\\\fp\nmid\fa\fm}}(1+\norm(\fp)^{-1})^{-1}&=\prod_{\fp}\left(1+\sum_{j=1}^{\infty}\frac{\lambda_{y}(\fp^{3j})}{\norm(\fp)^{3j\sigma}}\left(1+\norm(\fp)^{-1}\right)^{-1}\right)\\&\hspace{1em}\times\frac{\prod_{\fp|\fa\fm}\left(\sum_{j=0}^{\infty}\frac{\lambda_{y}(\fp^{3j+v_{\fp}(\fm)})}{\norm(\fp)^{3j\sigma}}\right)}{\prod_{\fp|\fa\fm}\left(1+\sum_{j=1}^{\infty}\frac{\lambda_{y}(\fp^{3j})}{\norm(\fp)^{3j\sigma}}\left(1+\norm(\fp)^{-1}\right)^{-1}\right)}.\nonumber
\end{align}

Employing \eqref{N2} in \eqref{N1} yields
\begin{align}
\label{N3}
\widetilde{N}_{\sigma}(y)&=\prod_{\fp}\left(1+\sum_{j=1}^{\infty}\frac{\lambda_{y}(\fp^{3j})}{\norm(\fp)^{3j\sigma}}\left(1+\norm(\fp)^{-1}\right)^{-1}\right)\\&\hspace{2em}\times\sum_{\fm\text{ cubefree}}\frac{1}{\norm(\fm)^{2\sigma}}\prod_{\fp|\fm}(1+\norm(\fp)^{-1})^{-1}\sum_{\fa}\frac{\lambda_{y}(\fa^{3}\fm)}{\norm(\fa)^{3\sigma}}\nonumber
\\&\hspace{2em}\times\prod_{\substack{\fp|\fa\\\fp\nmid\fm}}(1+\norm(\fp)^{-1})^{-1}\frac{\prod_{\fp|\fa\fm}\left(\sum_{j=0}^{\infty}\frac{\lambda_{y}(\fp^{3j+v_{\fp}(\fm)})}{\norm(\fp)^{3j\sigma}}\right)}{\prod_{\fp|\fa\fm}\left(1+\sum_{j=1}^{\infty}\frac{\lambda_{y}(\fp^{3j})}{\norm(\fp)^{3j\sigma}}\left(1+\norm(\fp)^{-1}\right)^{-1}\right)}.\nonumber\end{align}

Let us now focus in above on the sum over cube-free integral ideals $\fm$ which we denote by $\widetilde{P}_{\sigma}(y)$. We have 
\begin{align*} \widetilde{P}_{\sigma}(y)&=\sum_{\fm\text{ cubefree}}\frac{1}{\norm(\fm)^{2\sigma}}\prod_{\fp|\fm}\frac{\left(\sum_{j=0}^{\infty}\frac{\lambda_{y}(\fp^{3j+v_{\fp}(\fm)})}{\norm(\fp)^{3j\sigma}}(1+\norm(\fp)^{-1})^{-1}\right)}{\left(1+\sum_{j=1}^{\infty}\frac{\lambda_{y}(\fp^{3j})}{\norm(\fp)^{3j\sigma}}\left(1+\norm(\fp)^{-1}\right)^{-1}\right)}\\&\hspace{1em}\times\sum_{\fa}\frac{\lambda_{y}(\fa^{3}\fm)}{\norm(\fa)^{3\sigma}}\prod_{\substack{\fp|\fa\\\fp\nmid\fm}}\frac{\left(\sum_{j=0}^{\infty}\frac{\lambda_{y}(\fp^{3j+v_{\fp}(\fm)})}{\norm(\fp)^{3j\sigma}}\left(1+\norm(\fp)^{-1}\right)^{-1}\right)}{\left(1+\sum_{j=1}^{\infty}\frac{\lambda_{y}(\fp^{3j})}{\norm(\fp)^{3j\sigma}}\left(1+\norm(\fp)^{-1}\right)^{-1}\right)}.
\end{align*}
Writing the sum over $\fa$ as an Euler product yields
\begin{align*}
\widetilde{P}_{\sigma}(y)&=\sum_{\fm\text{ cubefree}}\frac{1}{\norm(\fm)^{2\sigma}}\prod_{\fp|\fm}\frac{\left(\sum_{j=0}^{\infty}\frac{\lambda_{y}(\fp^{3j+v_{\fp}(\fm)})}{\norm(\fp)^{3j\sigma}}(1+\norm(\fp)^{-1})^{-1}\right)}{\left(1+\sum_{j=1}^{\infty}\frac{\lambda_{y}(\fp^{3j})}{\norm(\fp)^{3j\sigma}}\left(1+\norm(\fp)^{-1}\right)^{-1}\right)}\\&\hspace{1em}\times\prod_{\fp\nmid\fm}\left(1+\sum_{k=1}^{\infty}\frac{\lambda_{y}(\fp^{3k})}{\norm(\fp)^{3k\sigma}}\frac{\left(\sum_{j=0}^{\infty}\frac{\lambda_{y}(\fp^{3j})}{\norm(\fp)^{3j\sigma}}\left(1+\norm(\fp)^{-1}\right)^{-1}\right)}{\left(1+\sum_{j=1}^{\infty}\frac{\lambda_{y}(\fp^{3j})}{\norm(\fp)^{3j\sigma}}\left(1+\norm(\fp)^{-1}\right)^{-1}\right)}\right)\prod_{\fp|\fm}\left(\sum_{k=0}^{\infty}\frac{\lambda_{y}(\fp^{3k+v_{\fp}(\fm)})}{\norm(\fp)^{3k\sigma}}\right).
\end{align*}
Thus, we have
\begin{align}
\label{N4}
\widetilde{P}_{\sigma}(y)&=\prod_{\fp}\left(1+\sum_{k=1}^{\infty}\frac{\lambda_{y}(\fp^{3k})}{\norm(\fp)^{3k\sigma}}\frac{\left(\sum_{j=0}^{\infty}\frac{\lambda_{y}(\fp^{3j})}{\norm(\fp)^{3j\sigma}}\left(1+\norm(\fp)^{-1}\right)^{-1}\right)}{\left(1+\sum_{j=1}^{\infty}\frac{\lambda_{y}(\fp^{3j})}{\norm(\fp)^{3j\sigma}}\left(1+\norm(\fp)^{-1}\right)^{-1}\right)}\right)\\
&\hspace{1em}\times\sum_{\fm\text{ cubefree}}\frac{1}{\norm(\fm)^{2\sigma}}\frac{\prod_{\fp|\fm}\frac{\left(\sum_{j=0}^{\infty}\frac{\lambda_{y}(\fp^{3j+v_{\fp}(\fm)})}{\norm(\fp)^{3j\sigma}}(1+\norm(\fp)^{-1})^{-1}\right)}{\left(1+\sum_{j=1}^{\infty}\frac{\lambda_{y}(\fp^{3j})}{\norm(\fp)^{3j\sigma}}\left(1+\norm(\fp)^{-1}\right)^{-1}\right)}\prod_{\fp|\fm}\left(\sum_{k=0}^{\infty}\frac{\lambda_{y}(\fp^{3k+v_{\fp}(\fm)})}{\norm(\fp)^{3k\sigma}}\right)}{\prod_{\fp|\fm}\left(1+\sum_{k=1}^{\infty}\frac{\lambda_{y}(\fp^{3k})}{\norm(\fp)^{3k\sigma}}\frac{\left(\sum_{j=0}^{\infty}\frac{\lambda_{y}(\fp^{3j})}{\norm(\fp)^{3j\sigma}}\left(1+\norm(\fp)^{-1}\right)^{-1}\right)}{\left(1+\sum_{j=1}^{\infty}\frac{\lambda_{y}(\fp^{3j})}{\norm(\fp)^{3j\sigma}}\left(1+\norm(\fp)^{-1}\right)^{-1}\right)}\right)}.\nonumber
\end{align}

Next observe that the new $\fm$-summation in the above expression has the following Euler product decomposition:
\begin{align}
\label{N5}
\prod_{\fp}\left(1+\sum_{i=1}^{2}\frac{1}{\norm(\fp)^{2i\sigma}}\frac{\frac{\left(\sum_{j=0}^{\infty}\frac{\lambda_{y}(\fp^{3j+i})}{\norm(\fp)^{3j\sigma}}(1+\norm(\fp)^{-1})^{-1}\right)}{\left(1+\sum_{j=1}^{\infty}\frac{\lambda_{y}(\fp^{3j})}{\norm(\fp)^{3j\sigma}}\left(1+\norm(\fp)^{-1}\right)^{-1}\right)}\left(\sum_{k=0}^{\infty}\frac{\lambda_{y}(\fp^{3k+i})}{\norm(\fp)^{3k\sigma}}\right)}{\left(1+\sum_{k=1}^{\infty}\frac{\lambda_{y}(\fp^{3k})}{\norm(\fp)^{3k\sigma}}\frac{\left(\sum_{j=0}^{\infty}\frac{\lambda_{y}(\fp^{3j})}{\norm(\fp)^{3j\sigma}}\left(1+\norm(\fp)^{-1}\right)^{-1}\right)}{\left(1+\sum_{j=1}^{\infty}\frac{\lambda_{y}(\fp^{3j})}{\norm(\fp)^{3j\sigma}}\left(1+\norm(\fp)^{-1}\right)^{-1}\right)}\right)}\right).
\end{align}

Employing \eqref{N5} in \eqref{N4} and inserting the deduced result for 
$\widetilde{P}_{\sigma}(y)$ in \eqref{N3} yield
\begin{equation*}
\widetilde{N}_{\sigma}(y)=\prod_{\fp}\widetilde{M}_{\sigma,\fp}(y),
\end{equation*}
where
\begin{align*}
\widetilde{M}_{\sigma,\fp}(y)&=1-\left(1+\norm(\fp)^{-1}\right)^{-1}
+\left(1+\norm(\fp)^{-1}\right)^{-1}\sum_{i=0}^{2}\sum_{\substack{j=0\\k=0}}^{\infty}\frac{\lambda_{y}(\fp^{3j+i})\lambda_{y}(\fp^{3k+i})}{\norm(\fp)^{(3j+i)\sigma}\norm(\fp)^{(3k+i)\sigma}}.
\end{align*}

Using the definition of the arithmetic function $\lambda_{y}$ in (Case 2), we get
\begin{align*}
\sum_{i=0}^{2}\sum_{\substack{j=0\\k=0}}^{\infty}\frac{\lambda_{y}(\fp^{3j+i})\lambda_{y}(\fp^{3k+i})}{\norm(\fp)^{(3j+i)\sigma}\norm(\fp)^{(3k+i)\sigma}}&=\sum_{i=0}^{2}\sum_{j=0}^{\infty}\frac{G_{3j+i}\left(-iy\log\norm(\fp)\right)}{\norm(\fp)^{(3j+i)s}}\sum_{k=0}^{\infty}\frac{G_{3k+i}\left(-iy\log\norm(\fp)\right)}{\norm(\fp)^{(3k+i)\sigma}},
\end{align*}
which by 
\eqref{G}
is
\begin{align*}
=\frac19\sum_{i=0}^{2}\sum_{\substack{n=0\\m=0}}^{2}\frac{1}{\zeta_{3}^{i(n+m)}}\exp\left(-i\log\norm(\fp)\left(\frac{y\zeta_{3}^{n}}{\norm(\fp)^{\sigma}-\zeta_{3}^{n}}+\frac{y\zeta_{3}^{m}}{\norm(\fp)^{\sigma}-\zeta_{3}^{m}}\right)\right).
\end{align*}
Since 
\[\sum_{i=0}^{2}\frac{1}{\zeta_{3}^{i(n+m)}}=\begin{cases}
3 & \text{ if }n+m=0\text{ or }3, \\
0 & \text{ otherwise}, 
 \end{cases}\]
we conclude that 
\begin{align*}
\sum_{i=0}^{2}\sum_{\substack{j=0\\k=0}}^{\infty}\frac{\lambda_{y}(\fp^{3j+i})\lambda_{y}(\fp^{3k+i})}{\norm(\fp)^{(3j+i)\sigma}\norm(\fp)^{(3k+i)\sigma}}&=\frac13\sum_{j=0}^{2}\exp\left(-2iy\Re\left(\frac{\zeta_{3}^{j} \log\norm(\fp)}{\norm(\fp)^{\sigma}-\zeta_{3}^{j}}\right)\right).
\end{align*}
Therefore, $\widetilde{N}_{\sigma}(y)$ in (Case 2) can be written as
\begin{align*}
\widetilde{N}_{\sigma}(y)&=\prod_{\fp} \left(\frac{1}{\norm(\fp)+1}+\frac{1}{3}\left(\frac{\norm(\fp)}{\norm(\fp)+1}\right)\sum_{j=0}^{2}\exp\left(-2iy\Re\left(\frac{\zeta_{3}^{j}\log\norm(\fp)}{\norm(\fp)^{\sigma}-\zeta_{3}^{j}}\right)\right)\right).
\end{align*}
Similar calculations show that $\widetilde{N}_{\sigma}(y)$ in (Case 1) takes the form
\begin{align*}
\widetilde{N}_{\sigma}(y)&=\prod_{\fp} \left( \frac{1}{\norm(\fp)+1}+\frac{1}{3}\left(\frac{\norm(\fp)}{\norm(\fp)+1}\right)\sum_{j=0}^{2}\exp\left(-2iy\log\left|1-\frac{\zeta_{3}^{j}}{\norm(\fp)^{\sigma}}\right|\right)\right).
\end{align*}

\end{proof}

\section{PROOF OF PROPOSITION  \ref{mainprop2}}\label{sec:good-density}
\begin{proof}
Notice that $\overline{\widetilde{M}_{\sigma}(y)}=\widetilde{M}_{\sigma}(-y)$ since $\overline{\lambda_{y}(\fa)}=\lambda_{-y}(\fa)$. Thus, we may assume without loss of generality that $y>0$. First, we deal with (Case 1). By Lemma \ref{lem:euler}, we have 
\begin{equation*}
\widetilde{M}_{\sigma}(y)=\exp\left(-2iy\log(1-3^{-\sigma})\right)\prod_{\fp\nmid\langle3\rangle}\widetilde{M}_{\sigma,\fp}(y),
\end{equation*}
where
\begin{equation*} \widetilde{M}_{\sigma,\fp}(y)=  \frac{1}{\norm(\fp)+1}+\frac{1}{3}\left(\frac{\norm(\fp)}{\norm(\fp)+1}\right)\sum_{j=0}^{2} \exp\left(-2iy\log\left|1-\frac{\zeta_{3}^{j}}{\norm(\fp)^{\sigma}}\right|\right).
\end{equation*}

Note that for all $y$ we have $|\widetilde{M}_{\sigma,\fp}(y)|\leq 1$. Our goal here is to show that for large $y$ a sizeable number of $\widetilde{M}_{\sigma,\fp}(y)$ have absolute value less than $1-\beta$ for a fixed $\beta>0$, more precisely we will show that the number of such $\widetilde{M}_{\sigma,\fp}(y)$ is bounded below by a power of $y$ (depending on $\sigma$).

Notice that 
\begin{align*}
\widetilde{Q}_{\sigma,\fp}(y)&=\sum_{j=0}^{2}\exp\left(-2iy\log\left|1-\frac{\zeta_{3}^{j}}{\norm(\fp)^{\sigma}}\right|\right).
\\&=\exp\left(-2iy\log(1-p^{-\sigma})\right)\left(1+2\exp\left(-2iy\log\frac{\sqrt{\norm(\fp)^{2\sigma}+\norm(\fp)^{\sigma}+1}}{\norm(\fp)^{\sigma}-1}\right)\right),
\end{align*}
and so 
\begin{align*}
\left|\widetilde{Q}_{\sigma,\fp}(y)\right|&=\left|1+2\exp\left(-2iy\log\frac{\sqrt{\norm(\fp)^{2\sigma}+\norm(\fp)^{\sigma}+1}}{\norm(\fp)^{\sigma}-1}\right)\right|\\&=\left|1+2\cos\left(2y\log\frac{\sqrt{\norm(\fp)^{2\sigma}+\norm(\fp)^{\sigma}+1}}{\norm(\fp)^{\sigma}-1}\right)-2i\sin\left(2y\log\frac{\sqrt{\norm(\fp)^{2\sigma}+\norm(\fp)^{\sigma}+1}}{\norm(\fp)^{\sigma}-1}\right)\right|\\&\leq2\cos^{2}\left(y\log\frac{\sqrt{\norm(\fp)^{2\sigma}+\norm(\fp)^{\sigma}+1}}{\norm(\fp)^{\sigma}-1}\right)+4\left|\cos\left(y\log\frac{\sqrt{\norm(\fp)^{2\sigma}+\norm(\fp)^{\sigma}+1}}{\norm(\fp)^{\sigma}-1}\right)\right|.
\end{align*}
For any $\epsilon>0$ and $y$ large enough, we consider the prime ideals $\fp$ for which the  condition
\begin{equation}\label{eqn:condition2}
1.35-\epsilon\leq y\log\frac{\sqrt{\norm(\fp)^{2\sigma}+\norm(\fp)^{\sigma}+1}}{\norm(\fp)^{\sigma}-1}\leq 1.77+\epsilon
\end{equation}
holds.
 By taking $\epsilon$ small enough, we get $\left|\cos\left(y\log\frac{\sqrt{\norm(\fp)^{2\sigma}+\norm(\fp)^{\sigma}+1}}{\norm(\fp)^{\sigma}-1}\right) \right|\leq 0.22$ which implies that $\left|\widetilde{Q}_{\sigma,\fp}(y)\right|\leq 0.9768$. Hence, for all $\fp$ satisfying (\ref{eqn:condition2}), we have \[\left|\widetilde{M}_{\sigma,\fp}(y)\right|\leq \frac{1}{\norm(\fp)+1}+0.3256\left(\frac{\norm(\fp)}{\norm(\fp)+1}\right)\leq 0.8256.\]
Let $\Pi(x)$ be the number of prime ideals in $k$ with norm at most $x$, and let $\Pi_{\sigma}(y)$ be the number of prime ideals satisfying (\ref{eqn:condition2}). Notice that 
$\frac{2y}{2.36}\leq \norm(\fp)^{\sigma}\leq \frac{2y}{1.8}$ is equivalent to 
 \[ \frac{1.8}{2}\norm(\fp)^{\sigma}\log\tfrac{\sqrt{\norm(\fp)^{2\sigma}+\norm(\fp)^{\sigma}+1}}{\norm(\fp)^{\sigma}-1}\leq y\log\tfrac{\sqrt{\norm(\fp)^{2\sigma}+\norm(\fp)^{\sigma}+1}}{\norm(\fp)^{\sigma}-1}\leq \frac{2.36}{2}\norm(\fp)^{\sigma}\log\tfrac{\sqrt{\norm(\fp)^{2\sigma}+\norm(\fp)^{\sigma}+1}}{\norm(\fp)^{\sigma}-1}.\]Since 
 $$\displaystyle{\lim_{\norm(\fp)\to\infty}\norm(\fp)^{\sigma}\log\tfrac{\sqrt{\norm(\fp)^{2\sigma}+\norm(\fp)^{\sigma}+1}}{\norm(\fp)^{\sigma}-1}=\frac32},$$ 
 we get that \[\frac{2y}{2.36}\leq \norm(\fp)^{\sigma}\leq \frac{2y}{1.8}\implies 1.35-\epsilon\leq y\log\frac{\sqrt{\norm(\fp)^{2\sigma}+\norm(\fp)^{\sigma}+1}}{\norm(\fp)^{\sigma}-1}\leq 1.77 +\epsilon.\]
It follows that \[\Pi_{\sigma}(y)>\Pi\left(\left(\frac{2y}{1.8}\right)^{\frac{1}{\sigma}}\right)-\Pi\left(\left(\frac{2y}{2.36}\right)^{\frac{1}{\sigma}}\right)\gg_{\sigma} y^{\frac{1}{\sigma}-\delta}\] for any sufficiently small $\delta>0$. 
Therefore, 
\[\left|\widetilde{M}_{\sigma}(y)\right|=\prod_{\fp}\left|\widetilde{M}_{\sigma,\fp}(y)\right|\leq 0.8256^{\Pi_{\sigma}(y)}=\exp\left(-\log\left(\frac{1}{0.8256}\right)\Pi_{\sigma}(y)\right)\leq\exp\left(-Cy^{\frac{1}{\sigma}-\delta}\right),\] where $C$ is a positive constant depending only on $\sigma$ and $\delta$.

Next, we deal with (Case 2).  By Lemma \ref{lem:euler}, we have 
\begin{equation*}
\widetilde{M}_{\sigma}(y)=\exp\left(-2iy\frac{\log 3}{3^{\sigma}-1}\right)\prod_{\fp\nmid\langle3\rangle}\widetilde{M}_{\sigma,\fp}(y),
\end{equation*}
where
\begin{equation*} \widetilde{M}_{\sigma,\fp}(y)=\frac{1}{\norm(\fp)+1}+\frac{1}{3}\left(\frac{\norm(\fp)}{\norm(\fp)+1}\right)\sum_{j=0}^{2}\exp\left(-2iy\log\norm(\fp)\Re\left(\frac{\zeta_{3}^{j}}{\norm(\fp)^{\sigma}-\zeta_{3}^{j}}\right)\right).
\end{equation*}

Let us now consider the finite sum \[\widetilde{Q}_{\sigma,\fp}(y)=\sum_{j=0}^{2}\exp\left(-2iy\log\norm(\fp)\Re\left(\frac{\zeta_{3}^{j}}{\norm(\fp)^{\sigma}-\zeta_{3}^{j}}\right)\right).\] 
For ease of notation, we put $\alpha_{j}=2y\log\norm(\fp)\Re\left(\frac{\zeta_{3}^{j}}{\norm(\fp)^{\sigma}-\zeta_{3}^{j}}\right)$ for $j=0,1,2$. Since $\alpha_{1}=\alpha_{2}$, we have 
\begin{align*}\widetilde{Q}_{\sigma,\fp}(y)&=\exp\left(-i\alpha_{0}\right)+2\exp\left(-i\alpha_{1}\right)\\&=\exp\left(-i\alpha_{0}\right)\left[1+2\exp\left(-i\alpha_{1}+i\alpha_{0}\right)\right].\end{align*}
Therefore,
\begin{align*}|\widetilde{Q}_{\sigma,\fp}(y)|&=\left|1+2\exp\left(i\left(\alpha_{0}-\alpha_{1}\right)\right)\right|\\&=\left|1+2\cos(\alpha_{0}-\alpha_{1})+2i\sin(\alpha_{0}-\alpha_{1})\right|\\&=\left|2\cos^{2}\left(\frac{\alpha_{0}-\alpha_{1}}{2}\right)+4i\sin\left(\frac{\alpha_{0}-\alpha_{1}}{2}\right)\cos\left(\frac{\alpha_{0}-\alpha_{1}}{2}\right)\right|\\&\leq 2\left(\cos^{2}\left(\frac{\alpha_{0}-\alpha_{1}}{2}\right)+2\left|\cos\left(\frac{\alpha_{0}-\alpha_{1}}{2}\right)\right|\right).
\end{align*}
For any $\epsilon>0$ and $y$ large enough, we consider the prime ideals $\fp$ for which the condition 
\begin{equation}\label{eqn:condition1}
1.35-\epsilon\leq\frac{\alpha_{0}-\alpha_{1}}{2}\leq 1.77+\epsilon
\end{equation}
holds. Once again, by taking $\epsilon$ small enough, we get $\left|\cos\left(\frac{\alpha_{0}-\alpha_{1}}{2}\right)\right|\leq 0.22$ which implies that $\left|\widetilde{Q}_{\sigma,\fp}(y)\right|\leq 0.9768$. Hence, for all $\fp$ satisfying (\ref{eqn:condition1}), we have \[\left|\widetilde{M}_{\sigma,\fp}(y)\right|\leq \frac{1}{\norm(\fp)+1}+0.3256\left(\frac{\norm(\fp)}{\norm(\fp)+1}\right)\leq 0.8256.\]
 Notice that 
\[\frac{2y\log 2y}{2.36\sigma}\leq \norm(\fp)^{\sigma}\leq \frac{2y\log 2y}{1.8\sigma}\implies 1.35-\epsilon\leq\frac{\alpha_{0}-\alpha_{1}}{2}\leq 1.77+\epsilon.\]
We deduce that the number of prime ideals satisfying (\ref{eqn:condition1}) is at least \[\Pi\left(\left(\frac{2y\log 2y}{1.8\sigma}\right)^{\frac{1}{\sigma}}\right)-\Pi\left(\left(\frac{2y\log 2y}{2.36\sigma}\right)^{\frac{1}{\sigma}}\right)\gg_{\sigma} y^{\frac{1}{\sigma}}.\] 
Therefore, 
\[\left|\widetilde{M}_{\sigma}(y)\right|\leq\exp\left(-C'y^{\frac{1}{\sigma}}\right),\] where $C'$ is a positive constant depending only on $\sigma$.

\end{proof}

\begin{rezabib}

\bib{Bill}{book} {
author={Billingsley, Patrick},
   title={Probability and measure},
   series={Wiley Series in Probability and Mathematical Statistics},
   edition={3},
   note={A Wiley-Interscience Publication},
   publisher={John Wiley \& Sons, Inc., New York},
   date={1995},
   pages={xiv+593},

}

\bib{BGL}{article} {
 author={Blomer, Valentin},
   author={Goldmakher, Leo},
   author={Louvel, Beno\^{i}t},
   title={$L$-functions with $n$-th-order twists},
   journal={Int. Math. Res. Not. IMRN},
   date={2014},
   number={7},
   pages={1925--1955},
 
}
	   
 \bib{CK}{article}{
   author={Cho, Peter J.},
   author={Kim, Henry H.},
   title={Moments of logarithmic derivatives of $L$-functions},
   journal={J. Number Theory},
   volume={183},
   date={2018},
   pages={40--61},

}

\bib{chowla-erdos}{article} {
   author={Chowla, S.},
   author={Erd\"{o}s, P.},
   title={A theorem on the distribution of the values of $L$-functions},
   journal={J. Indian Math. Soc. (N.S.)},
   volume={15},
   date={1951},
   pages={11--18},
 
}

\bib{elliott-0}{article}{
   author={Elliott, P. D. T. A.},
   title={The distribution of the quadratic class number},
   language={English, with Lithuanian and Russian summaries},
   journal={Litovsk. Mat. Sb.},
   volume={10},
   date={1970},
   pages={189--197},

}

\bib{elliott-01}{article}{
   author={Elliott, P. D. T. A.},
   title={On the distribution of the values of Dirichlet $L$-series in the
   half-plane $\sigma >{1\over 2}$},
   journal={Nederl. Akad. Wetensch. Proc. Ser. A {\bf 74}=Indag. Math.},
   volume={33},
   date={1971},
   pages={222--234},
  
}
	
\bib{elliott-02}{article}{
   author={Elliott, P. D. T. A.},
   title={On the distribution of ${\rm arg}L(s,\,\chi )$ in the half-plane
   $\sigma >{1\over 2}$},
   journal={Acta Arith.},
   volume={20},
   date={1972},
   pages={155--169},
  
}

\bib{elliott}{article}{
   author={Elliott, P. D. T. A.},
   title={On the distribution of the values of quadratic $L$-series in the
   half-plane $\sigma >{1\over 2}$},
   journal={Invent. Math.},
   volume={21},
   date={1973},
   pages={319--338},
 
}

\bib{elliott-book-I}{book}{
   author={Elliott, P. D. T. A.},
   title={Probabilistic number theory. I},
   series={Grundlehren der Mathematischen Wissenschaften [Fundamental
   Principles of Mathematical Science]},
   volume={239},
   note={Mean-value theorems},
   publisher={Springer-Verlag, New York-Berlin},
   date={1979},
   pages={xxii+359+xxxiii pp. (2 plates)},

}

\bib{elliott-book}{book}{
   author={Elliott, P. D. T. A.},
   title={Probabilistic number theory. II},
   series={Grundlehren der Mathematischen Wissenschaften [Fundamental
   Principles of Mathematical Science]},
   volume={240},
   note={Central limit theorems},
   publisher={Springer-Verlag, New York-Berlin},
   date={1980},
   pages={xviii+341+xxxiv pp.},
 
}

\bib{Folland}{book} {
  author={Folland, Gerald B.},
   title={Real analysis},
   series={Pure and Applied Mathematics (New York)},
   edition={2},
   note={Modern techniques and their applications;
   A Wiley-Interscience Publication},
   publisher={John Wiley \& Sons, Inc., New York},
   date={1999},
   pages={xvi+386},
  
}

\bib{GS}{article}{
   author={Granville, A.},
   author={Soundararajan, K.},
   title={The distribution of values of $L(1,\chi_d)$},
   journal={Geom. Funct. Anal.},
   volume={13},
   date={2003},
   number={5},
   pages={992--1028},

}

\bib{H}{book}{
    author={Hardy, G. H.},
   title={Divergent Series},
   publisher={Oxford, at the Clarendon Press},
   date={1949},
   pages={xvi+396},
}
	
\bib{MR1670215}{article} {
      author={Hattori, Tetsuya},
   author={Matsumoto, Kohji},
   title={A limit theorem for Bohr-Jessen's probability measures of the
   Riemann zeta-function},
   journal={J. Reine Angew. Math.},
   volume={507},
   date={1999},
   pages={219--232},
}
\bib{HB}{article}{
   author={Heath-Brown, D. R.},
   title={Kummer's conjecture for cubic Gauss sums},
   journal={Israel J. Math.},
   volume={120},
   date={2000},
   number={part A},
   part={part A},
   pages={97--124}, 
}
	
\bib{HB-P}{article}{
   author={Heath-Brown, D. R.},
   author={Patterson, S. J.},
   title={The distribution of Kummer sums at prime arguments},
   journal={J. Reine Angew. Math.},
   volume={310},
   date={1979},
   pages={111--130}, 
   }

\bib{I-M1}{article}{
   author={Ihara, Yasutaka},
   author={Matsumoto, Kohji},
   title={On certain mean values and the value-distribution of logarithms of
   Dirichlet $L$-functions},
   journal={Q. J. Math.},
   volume={62},
   date={2011},
   number={3},
   pages={637--677},

}

\bib{I-M}{article}{
   author={Ihara, Yasutaka},
   author={Matsumoto, Kohji},
   title={On $\log L$ and $L'/L$ for $L$-functions and the associated
   ``$M$-functions'': connections in optimal cases},
   language={English, with English and Russian summaries},
   journal={Mosc. Math. J.},
   volume={11},
   date={2011},
   number={1},
   pages={73--111, 182},}

%
%

\bib{I-R}{book}{
   author={Ireland, Kenneth},
   author={Rosen, Michael},
   title={A classical introduction to modern number theory},
   series={Graduate Texts in Mathematics},
   volume={84},
   edition={2},
   publisher={Springer-Verlag, New York},
   date={1990},
   pages={xiv+389},

}

%
   

\bib{lamzouri2}{article}{
   author={Lamzouri, Youness},
   title={Distribution of values of $L$-functions at the edge of the
   critical strip},
   journal={Proc. Lond. Math. Soc. (3)},
   volume={100},
   date={2010},
   number={3},
   pages={835--863},
 
}

\bib{lamzouri3}{article}{
   author={Lamzouri, Youness},
   title={On the distribution of extreme values of zeta and $L$-functions in
   the strip $\frac12<\sigma<1$},
   journal={Int. Math. Res. Not. IMRN},
   date={2011},
   number={23},
   pages={5449--5503},
 
}

\bib{MR3378382}{article}{
   author={Lamzouri, Youness},
   title={The distribution of Euler-Kronecker constants of quadratic fields},
   journal={J. Math. Anal. Appl.},
   volume={432},
   date={2015},
   number={2},
   pages={632--653},

}


\bib{lemmermeyer}{book}{
    author={Lemmermeyer, Franz},
   title={Reciprocity laws},
   series={Springer Monographs in Mathematics},
   note={From Euler to Eisenstein},
   publisher={Springer-Verlag, Berlin},
   date={2000},
   pages={xx+487},
 
}
	
\bib{luo}{article}{
  author={Luo, Wenzhi},
   title={Values of symmetric square $L$-functions at $1$},
   journal={J. Reine Angew. Math.},
   volume={506},
   date={1999},
   pages={215--235},
} 

\bib{L}{article}{
  author={Luo, Wenzhi},
   title={On Hecke $L$-series associated with cubic characters},
   journal={Compos. Math.},
   volume={140},
   date={2004},
   number={5},
   pages={1191--1196},
} 
	
\bib{M-M}{article}{
   author={Mourtada, Mariam},
   author={Murty, V. Kumar},
   title={Distribution of values of $L'/L(\sigma,\chi_D)$},
   language={English, with English and Russian summaries},
   journal={Mosc. Math. J.},
   volume={15},
   date={2015},
   number={3},
   pages={497--509, 605},
 
}

%

\bib{T}{book}{
   author={Tenenbaum, G\'{e}rald},
   title={Introduction to analytic and probabilistic number theory},
   series={Graduate Studies in Mathematics},
   volume={163},
   edition={3},
   note={Translated from the 2008 French edition by Patrick D. F. Ion},
   publisher={American Mathematical Society, Providence, RI},
   date={2015},
   pages={xxiv+629},

}
\bib{MR3728294}{book}{
   author={Titchmarsh, E. C.},
   title={The theory of functions},
   edition={2},
   publisher={Oxford University Press, Oxford},
   date={1939},
   pages={x+454},
   
}
	
\bib{Wintner}{article}{
   author={Wintner, Aurel},
   title={On symmetric Bernoulli convolutions},
   journal={Bull. Amer. Math. Soc.},
   volume={41},
   date={1935},
   number={2},
   pages={137--138},

}

\bib{X}{article}{
  author={Xia, Honggang},
   title={On zeros of cubic $L$-functions},
   journal={J. Number Theory},
   volume={124},
   date={2007},
   number={2},
   pages={415--428},
}

  \end{rezabib}

\end{document}